
\documentclass[a4paper,french,openright,11pt]{smfartVScomptage}

\usepackage{smfenum,smfthm,amsmath,amssymb,amscd,mathrsfs,euscript}

\usepackage{stmaryrd,mathabx,upgreek,color}
\usepackage[latin1]{inputenc}
\input{xypic}

\setlength{\textheight}{21cm} 
\setlength{\textwidth}{16cm} 
\setlength{\oddsidemargin}{0cm}
\setlength{\evensidemargin}{0cm}
\setlength{\topmargin}{1cm}

\numberwithin{equation}{section} 
\bibliographystyle{smfplain}
\theoremstyle{plain}


\def\CC{\mathbf{C}}
\def\NN{\mathbf{N}}

\def\ZZ{\mathbf{Z}}


\def\A{{\rm A}}
\def\B{{\rm B}}
\def\C{{\rm C}}
\def\D{{\rm D}}
\def\E{{\rm E}}
\def\F{{\rm F}}
\def\G{{\rm G}}
\def\H{{\rm H}}
\def\I{{\rm I}}
\def\J{{\rm J}}

\def\L{{\rm L}}
\def\M{{\rm M}}
\def\N{{\rm N}}
\def\P{{\rm P}}
\def\Q{{\rm Q}}
\def\R{{\rm R}}
\def\SS{{\rm S}}
\def\T{{\rm T}}

\def\X{{\rm X}}
\def\Y{{\rm Y}}
\def\Z{{\rm Z}}


\def\Aa{\mathscr{A}}

\def\Cc{\EuScript{C}}
\def\Dd{\EuScript{D}}
\def\Ee{\mathscr{E}}

\def\Gg{\EuScript{R}}

\def\Oo{\EuScript{O}}
\def\Pp{\EuScript{P}}

\def\Tt{\EuScript{T}}

\def\Zz{\EuScript{Z}}


\def\La{\Lambda}


\def\a{\alpha} 
\def\b{\beta}
\def\d{\delta}
\def\e{\varepsilon}

\def\g{\gamma}

\def\k{\kappa}
\def\l{\lambda}

\def\s{\sigma}
\def\t{\theta}

\def\w{\varpi}


\def\ie{c'est-à-dire }

\def\rp{\rangle}
\def\>{\geqslant}
\def\<{\leqslant}


\def\Hom{{\rm Hom}}

\def\Mat{{\rm M}}
\def\GL{{\rm GL}}
\def\Gal{{\rm Gal}}

\def\mult#1{{#1}^{\times}}

\def\ffr#1{\smash{\mathop{\longrightarrow}\limits^{#1}}}


\def\qlb{{\overline{\mathbf{Q}}_\ell}}
\def\zlb{{\overline{\mathbf{Z}}_\ell}}
\def\flb{{\overline{\mathbf{F}}_{\ell}}}

\def\ip{\boldsymbol{i}}
\def\rp{\boldsymbol{r}}

\def\r{{\textbf{\textsf{r}}}}

\def\ee{\varepsilon}

\def\LJ{\textbf{\textsf{B}}}
\def\TLJ{\widetilde{\LJ}{}}
\def\JL{\textbf{\textsf{J}}}
\def\TJL{\widetilde{\JL}{}}
\def\jl{\boldsymbol{j}}
\def\tjl{\widetilde{\boldsymbol{\jmath}}{}}

\def\tlj{\tjl^{*}}

\def\kk{\boldsymbol{k}}
\def\kd{\boldsymbol{d}}
\def\ke{\boldsymbol{e}}

\def\kl{\boldsymbol{k}'}
\def\d{k}
\def\gg{g}

\def\sp{{\rm sp}}
\def\Sp{{\rm Sp}}

\def\xt{x}
\def\ct{\widetilde\chi}
\def\kt{\widetilde\k}
\def\lt{\widetilde\l}
\def\st{\widetilde\s}
\def\tL{\widetilde\Lambda}

\def\GB{\mathcal{G}}
\def\TT{\boldsymbol{\Theta}}
\def\KM{\textbf{\textsf{K}}}
\def\0{\boldsymbol{0}}
\def\YY{\textbf{\textsf{Y}}}
\def\BB{\textbf{\textsf{B}}}

\def\at{\widetilde\a}
\def\tc{\widetilde\chi}

\def\tp{{\widetilde\pi}^{}{}}
\def\rt{{\widetilde\rho}^{}{}}

\def\tL{\widetilde\La}

\def\kt{\widetilde\k}
\def\lt{\widetilde\l}

\def\RA{\EuScript{R}} 
\def\RAT{\widehat{\EuScript{R}}}
\def\XA{{\rm Irr}}
\def\Seg{\EuScript{D}}
\def\cuspi{\rho}
\def\cuspit{{\widetilde\rho}}
\def\cuspif{\s}

\def\SFZ{\textbf{\textsf{S}}}
\def\SFD{\boldsymbol{\Delta}}
\def\SFDD{\textbf{\textsf{D}}}
\def\SFS{\textbf{\textsf{Z}}}
\def\SFU{\textbf{\textsf{U}}}
\def\SFA{\textbf{\textsf{A}}}
\def\SFB{\textbf{\textsf{C}}}
\def\SFL{\textbf{\textsf{V}}}
\def\SFM{\textbf{\textsf{M}}}
\def\com{{\sf c}}


\long\def\MSC#1\EndMSC{\def\arg{#1}\ifx\arg\empty\relax\else
     {\par\narrower\noindent%
     2010 Mathematics Subject Classification: #1\par}\fi}

\long\def\KEY#1\EndKEY{\def\arg{#1}\ifx\arg\empty\relax\else
	{\par\narrower\noindent Keywords and Phrases: #1\par}\fi}


\linespread{1.1}

\title[Correspondance de Jacquet-Langlands locale et congruences modulo $\ell$]
{Correspondance de Jacquet-Langlands locale et congruences modulo $\ell$}

\author{Alberto M\'\i nguez}
\address{Institut de Mathématiques de Jussieu, Paris Rive Gauche, Université 
  Pierre et Marie Curie, 4 place Jussieu, 75005, Paris, France.} 
\email{alberto.minguez@imj-prg.fr}

\author{Vincent Sécherre}
\address{Laboratoire de Mathémati\-ques de Versailles, UVSQ, CNRS, Université
Paris-Saclay, 78035 Versailles, France}
\email{vincent.secherre@math.uvsq.fr}

\begin{altabstract}
Let $\F$ be a non-Archimedean local field of residual characteristic $p$, 
and $\ell$ be a prime number different from $p$. 
We consider the local Jacquet-Langlands correspondence between $\ell$-adic 
discrete series of $\GL_n(\F)$ and an inner form $\GL_m(\D)$. 
We show that it respects the relationship of con\-gruen\-ce modulo $\ell$.
More precisely, we show that two integral $\ell$-adic discrete series of 
$\GL_n(\F)$ are congruent modulo $\ell$ if and only if the same holds for 
their Jacquet-Langlands transfers to $\GL_m(\D)$. 
We also prove that the Langlands-Jacquet morphism from 
the Grothendieck group of finite length $\ell$-adic representations of 
$\GL_n(\F)$ to that of $\GL_m(\D)$ defined by 
Badulescu is compatible~with reduction mod $\ell$.
\end{altabstract}

\thanks{
Le séjour de V.~Sécherre à l'University of East Anglia (Norwich)
en mai 2014, 
durant lequel une partie de ce travail a été ef\-fectuée, 
a été financé par l'EPSRC grant EP/H00534X/1.
Il remercie chaleureusement Shaun Stevens pour son invitation 
et les discussions à propos de ce travail.
A.~Minguez remercie João Pedro Dos Santos et Erez Lapid pour les
discussions à propos de ce travail. }


\begin{document} 

\maketitle


\MSC 
22E50 
\EndMSC
\KEY 
Modular representations of $p$-adic reductive groups, 
Jacquet-Lang\-lands correspondence, 
Cuspidal representations,
$\ell$-adic lifting, Congruences mod $\ell$
\EndKEY

\thispagestyle{empty}

\section{Introduction et énoncé des principaux résultats}

\subsection{} 

Soit $\F$ un corps localement compact non archimédien de caractéristique 
résiduelle $p$, 
soit $\H$ le groupe linéaire général $\GL_n(\F)$, $n\>1$, et soit $\G$ une 
forme intérieure de $\H$ sur $\F$.
Celle-ci est un groupe de la forme $\GL_m(\D)$, 
où $m$ est un diviseur de $n$ et $\D$ une algèbre à division de centre
$\F$, de degré réduit noté $d$, tels que $md=n$.
Notons $\Dd(\G,\CC)$ l'ensemble des classes d'isomorphisme de re\-pré\-sentations 
lisses complexes irréductibles, essentiellement de carré intégrable, de $\G$.
La cor\-res\-pon\-dance de Jacquet-Langlands locale 
\cite{JL,Rog,DKV,BaduJL}
est une bi\-jec\-tion~:
\begin{equation*}
\boldsymbol{\pi} : \Dd(\G,\CC)\to\Dd(\H,\CC)
\end{equation*}
carac\-térisée par une identité de caractères 
sur les classes de conjugaison elliptiques régu\-liè\-res.
Elle relie, dans l'esprit du programme de Langlands, 
la théorie des re\-pré\-sentations lisses comple\-xes de $\G$ à celle de $\H$.


\subsection{} 
\label{introtpil}

Si l'on fixe un nombre premier $\ell$ différent de $p$ et que l'on passe aux
représentations $\ell$-adiques, en fixant un isomorphisme de
corps entre $\CC$ et une clôture algébrique $\qlb$ du corps des nombres 
$\ell$-adiques, on obtient une bijection~:
\begin{equation}
\label{PILT}
\widetilde{\boldsymbol{\pi}}_\ell : \Dd(\G,\qlb)\to\Dd(\H,\qlb)
\end{equation}
indépendante de cet isomorphisme, 
$\Dd(\G,\qlb)$ étant obtenu à partir de $\Dd(\G,\CC)$ par extension
des scalaires de $\CC$ à $\qlb$.
On a alors 
une notion de représentation $\ell$-adique entière, qu'on peut réduire mod $\ell$, 
et l'on peut étudier 
la compatibilité de $\widetilde{\boldsymbol{\pi}}_\ell$ vis-à-vis de 
la réduction modulo $\ell$.
L'ana\-lo\-gue de ce problème 
pour la correspondance de Langlands locale a été
étudié par Vignéras~\cite{Vigl}, puis Dat \cite{Datl} et Bushnell-Henniart 
\cite{BHl}. 

\label{menard}

Une représenta\-tion de $\Dd(\G,\qlb)$ étant entière si et seulement si son 
caractère central l'est, et la bijection 
$\widetilde{\boldsymbol{\pi}}_\ell$ préservant le caractère central, 
elle préserve aussi le fait d'être une re\-pré\-senta\-tion entière. 
Disons que deux représen\-ta\-tions $\ell$-adiques irréductibles entières de 
$\G$ sont \textit{congruentes mod $\ell$} si leurs réductions modulo $\ell$ 
sont identiques dans le groupe de Grothendieck $\RA(\G,\qlb)$ 
des représentations $\ell$-adiques de longueur finie de $\G$.
Enonçons le premier des résultats principaux cet article. 

\begin{theo}
\label{MAINTHEOREM}
Deux représentations entières de $\Dd(\G,\qlb)$ sont congruentes modulo $\ell$ 
si et seulement si leurs images par $\widetilde{\boldsymbol{\pi}}_\ell$ le 
sont. 
\end{theo}

La preuve du théorème \ref{MAINTHEOREM} est en partie inspirée de  
\cite{Datj}, qui traite le cas particulier où $\G$ est 
une forme intérieure compacte modulo le centre, 
et ne considère que les représentations entières 
de $\G$ dont la réduction modulo $\ell$ est irréductible. 
Pour traiter le cas général, 
des modi\-fi\-cations substan\-tielles doivent être apportées. 
Expliquons tout ceci en détail.

\subsection{} 
\label{Intro4}

Soit 
$\A$ le groupe multiplicatif d'une 
$\F$-algèbre à division centrale de degré réduit égal à $n$.
Dat a construit dans \cite{Datj} 
une bijection entre classes de représentations 
irréductibles $\ell$-modulaires --- \ie à coefficients dans une 
clôture algébrique $\flb$ d'un corps fini de caractéristique $\ell$ --- 
de $\A$ et classes de certaines représentations irréductibles 
$\ell$-modulaires de $\H$, baptisées ``super-Speh''. 
Elle est compatible, en un certain sens, à la correspondance 
de Jacquet-Lang\-lands $\ell$-adique 
\eqref{PILT} ci-dessus pour $\G$ égal à $\A$. 

\label{Intro5}

Plutôt que d'étudier directement la série discrète $\ell$-adique, 
qui se réduit mal modulo $\ell$, 
Dat étu\-die son image par l'involution de Zelevinski, \ie l'ensemble des
classes de représen\-ta\-tions de Speh $\ell$-adiques de $\H$.
De telles représentations sont dites 
$\ell$-super-Speh lorsqu'elles sont entières, 
et lors\-que leur réduction modulo $\ell$ est 
irréductible avec un support cuspidal supercus\-pi\-dal. 
La construction de la correspondan\-ce de Jacquet-Langlands locale 
modulo $\ell$ de \cite{Datj} repose sur le fait crucial que la 
correspondan\-ce \eqref{PILT} 
fait se correspondre bijectivement 
représentations $\ell$-adiques entières de $\A$ dont la réduction modulo $\ell$ est 
irréductible, et représentations $\ell$-super-Speh de $\H$.


Pour prouver ce fait, Dat utilise un critère numérique de
$\ell$-supercuspidalité établi par Vignéras pour construire 
une correspondance de Langlands locale modulo $\ell$ (\cite{Vigl}).
La réduction modulo $\ell$ d'une représentation irréductible cuspidale 
$\ell$-adique entière $\rt$ de $\H$ est toujours irréductible et cuspidale, 
mais elle n'est pas toujours supercuspidale~; 
plus précisément, elle est supercuspidale si et seu\-le\-ment si 
le nombre de représentations 
cuspidales $\ell$-adi\-ques entières de $\H$ qui sont 
({stric\-te\-ment}) congrues à $\rt$ est 
``le plus grand possible'' (\cite{Vigl} Proposition 2.3).
Il y a aussi une va\-riante de ce critère numérique pour $\A$ 
(\cite{Datj} Proposition 2.3.2).

\subsection{} 
\label{P14}

Si l'on veut construire une correspondance de Jacquet-Langlands locale mod 
$\ell$ géné\-ra\-le, il est naturel de commencer par étendre à $\G$ le critère
numérique de $\ell$-supercuspidalité.
C'est ce que nous faisons dans la section \ref{Section4},
en le présentant sous une forme légèrement diffé\-rente.
Soit $\rt$ une re\-pré\-sentation ir\-ré\-duc\-tible cuspidale $\ell$-adique entière de
$\G$. 
D'après \cite{MSt} Théorème 3.15, il y a 
une re\-pré\-sen\-ta\-tion irréductible cuspidale $\ell$-modulaire 
$\rho$ de $\G$ et un unique entier $a=a(\rt)\>1$ tels que la ré\-duc\-tion modulo 
$\ell$ de $\rt$ soit égale à~: 
\begin{equation}
\label{redcusp}
\r_\ell(\rt)=\rho+\rho\nu+\dots+\rho\nu^{a-1}
\end{equation}
dans le groupe de Grothendieck 
$\RA(\G,\flb)$
des représentations $\ell$-modulaires de 
longueur finie de $\G$, où $\nu$ désigne le caractère valeur absolue de la 
norme réduite.
La repré\-sentation $\rho$ n'est pas unique en gé\-né\-ral, mais sa classe 
inertielle $[\G,\rho]$ ne dépend que de la clas\-se inertielle $[\G,\rt]$ de $\rt$.
Quand $\G$ est déployé, l'entier $a(\rt)$ est toujours égal à $1$, 
\ie que la ré\-duction modulo $\ell$ de $\rt$ est tou\-jours irréductible.

\begin{defi}
\label{DEFlssintro}
On dit que $\rt$ est \textit{$\ell$-supercuspidale} 
si $\r_\ell(\rt)$ est irréductible et supercuspidale.
\end{defi}

\label{Notrlic}

Etant donnée une représentation irréductible cuspidale $\ell$-adique $\rt$ de $\G$,
on note~:
\begin{equation*}
\r_\ell([\G,\rt]) 
\end{equation*}
l'ensemble des réductions modulo $\ell$ des représentations 
entières iner\-tiellement équivalentes à $\rt$, 
et on appelle cet ensemble la {réduction modulo $\ell$} de $[\G,\rt]$. 
On note $n(\rt)$ le nom\-bre de ca\-rac\-tè\-res $\ell$-adiques 
non ramifiés $\ct$ de $\G$
tels que $\rt\ct$ est isomorphe à $\rt$ et $c(\rt)$
la plus grande puis\-san\-ce de $\ell$ divisant $q^{n(\rt)}-1$. 
Le résultat suivant généralise \cite{Vigl} et \cite{Datj}.

\begin{prop}
\label{CongCusp}
Soit $\rt$ une représentation irréductible cuspidale $\ell$-adique entière de $\G$. 
\begin{enumerate}
\item
L'ensemble des classes inertielles de représentations 
irréductibles cuspidales $\ell$-adi\-ques en\-tiè\-res de $\G$ congrues à $\rt$
est fini, de cardinal noté $t(\rt)$.
\item
On a~:
\begin{equation*}
t(\rt)\<c(\rt)
\end{equation*}
avec égalité si et seulement si $\rt$ est $\ell$-supercuspidale.
\end{enumerate}
\end{prop}

\subsection{} 
\label{DEFK}

Intéressons-nous maintenant au cas où $\rt$ n'est pas
$\ell$-super\-cuspidale~;
en étu\-diant plus finement la façon dont les entiers $t(\rt)$ et $c(\rt)$ diffèrent, 
il est raisonnable de penser qu'on pourra en déduire des in\-for\-mations
sur la structure de $\rt$.
D'après la classification des représentations irréductibles cuspidales 
$\ell$-modulaires de $\G$ en 
fonction des supercuspidales (\cite{MSc} Théorème 6.14), 
il existe un unique entier naturel~:
\begin{equation*}
\d(\rho) \>1
\end{equation*}
tel que $\rho$ apparaisse comme sous-quotient de l'induite parabolique d'une 
représentation irré\-duc\-tible supercuspidale du sous-groupe de Levi standard 
$\GL_{r}(\D)\times\dots\times\GL_{r}(\D)$ avec $r\d(\rho)=m$.
(Autrement dit, $k(\cuspi)$ est le nombre de termes du support supercuspidal 
de $\cuspi$.)
En particulier, $\rho$ est supercuspidale si et seulement si $\d(\rho)=1$.
Posons~:
\begin{equation*}
w(\rt) = \d(\rho) a(\rt).
\end{equation*}
Ainsi $\rt$ est $\ell$-super\-cuspidale si et seulement si $w(\rt)=1$. 
Le résultat suivant montre qu'on peut déterminer la valeur de $w(\rt)$ 
en com\-pa\-rant $t(\rt)$ et $c(\rt)$.

\begin{prop}
\label{CalculW}
Soit $\rt$ une $\qlb$-représentation irréductible cuspidale entière et 
non $\ell$-super\-cuspidale de $\G$. 
Alors~:
\begin{equation*}
t(\rt) w(\rt) = 
\left\{
\begin{array}{ll}
c(\rt)-1 & \text{si $t(\rt)$ est premier à $\ell$,} \\
c(\rt)(\ell-1)\ell^{-1} & \text{sinon.}
\end{array}
\right.
\end{equation*}
\end{prop}

\subsection{} 
\label{par7}

Changeons maintenant de point de vue. 
Quand $\G$ est déployé, 
Vignéras a montré (\cite{Vigb}) qu'une représentation irréductible
cus\-pi\-dale $\ell$-modu\-lai\-re $\rho$ de $\G$ 
se relève toujours en une 
représentation $\ell$-adique de $\G$, 
\ie qu'il existe une re\-pré\-sentation $\ell$-adique entière de $\G$ dont la 
réduction modulo $\ell$ est isomorphe à $\rho$. 
Si main\-te\-nant $\G$ n'est pas déployé, toute représentation 
irréductible \textit{super\-cuspidale}
$\ell$-mo\-du\-laire de $\G$ se relève à $\qlb$
(voir \cite{MSt,MSc})
mais il existe 
des représentations cuspida\-les qui ne se relèvent pas.
Etant don\-née une représentation 
cuspidale non super\-cus\-pidale $\ell$-mo\-dulaire $\rho$ de $\G$,
il est naturel de demander à quelle condition elle admet un relè\-ve\-ment.

\label{DEFds}

Pour répondre à cette question, nous avons besoin de l'invariant~:
\begin{equation*}
s(\rho) \>1
\end{equation*} 
introduit dans \cite{MSt},
dont la définition repose sur la construction des
représentations irré\-duc\-ti\-bles cuspidales de $\G$ par la théorie des types 
de Bushnell-Kutzko (voir le paragraphe \ref{PARA21}). 
C'est un diviseur de $d$~; en particulier il est toujours égal à $1$ quand 
$\G$ est déployé.
Cet invariant est relié à un autre invariant, le 
\textit{degré para\-métrique} $\delta(\rho)$ 
introduit dans \cite{BHJL3}, par l'identité $\delta(\rho)s(\rho)=md$.

\begin{prop}
\label{lift}
Soit $\rho$ une $\flb$-représentation irréductible cuspidale non 
supercuspidale de $\G$.
Pour que $\rho$ se relève à $\qlb$, il faut et il suffit que 
les entiers $s(\rho)$ et $\d(\rho)$ soient premiers entre eux
et que la représentation tordue $\rho\nu$ soit isomorphe à $\rho$.
\end{prop}

Quand $\G$ est déployé, on a 
toujours $s(\rho)=1$ et une repré\-sen\-tation irréductible cuspidale non 
super\-cuspidale $\rho$ est toujours isomorphe à sa tordue $\rho\nu$.
La condition de la proposition \ref{lift}
est donc toujours vérifiée~; on retrouve ainsi le résultat de relèvement de 
Vignéras. 


Plus généralement, 
on peut déterminer 
les valeurs possibles de $a(\rt)$ lorsque $\rt$ décrit les repré\-sen\-tations 
irré\-duc\-tibles cuspidales $\ell$-adiques entières 
de $\G$ dont la réduction modulo $\ell$ contient~$\rho$. 
La proposition suivante
répond à cette question et complète ainsi la proposition \ref{lift}.
Notons $v$ la valuation $\ell$-adique sur $\ZZ$
(normalisée par $v(\ell)=1$) et notons $\e(\rho)$ l'ordre de $q^{n(\rho)}$ dans 
$(\ZZ/\ell\ZZ)^\times$, \ie le plus 
petit entier $k\>1$ tel que $\rho\nu^k$ soit isomorphe à $\rho$
(voir le lemme \ref{STEP0}).

\begin{prop}
\label{lifta}
Soit $\rho$ une $\flb$-représentation irréductible cuspidale 
de $\G$ et soit un entier $a>1$.
Pour qu'il existe une $\qlb$-représentation irréductible cuspidale entière 
$\rt$ dont la réduc\-tion modulo $\ell$ contienne $\rho$ et soit de 
longueur $a$, il faut et il suffit que~:
\begin{enumerate}
\item
il existe un entier $u\in\{0,\dots,v(s(\rho))\}$ tel que $a=\e(\rho)\ell^u$~;
\item
les entiers $s(\rho)a^{-1}$ et $\d(\rho)$ soient premiers entre eux. 
\end{enumerate}
\end{prop}

\subsection{} 
\label{Anwj}

Nous utilisons ensuite la proposition \ref{lifta} pour 
obtenir une formule de comptage de classes iner\-tiel\-les de représentations 
cuspidales $\ell$-modulaires, dans l'esprit de~\cite{BHcgds}. 
Contrairement à Bush\-nell et Henniart, qui obtiennent leur formule en 
s'appuyant sur la correspondance de Jacquet-Lang\-lands locale et sur 
l'existence préalable 
d'une telle for\-mu\-le dans le cas 
du groupe mul\-tiplicatif d'une algèbre à division,
nous établissons la nôtre par un calcul direct, en termes 
de $\F$-endo\-classes de caractères simples \cite{BSS}.

Fixons un entier $w$ divisant $n$ et un nombre rationnel $j\>0$,
et notons $\Aa_\ell(\D,w,j)$ l'en\-sem\-ble des réductions mod $\ell$ de 
classes inertielles de re\-pré\-sen\-ta\-tions ir\-ré\-ductibles cuspidales 
$\ell$-adiques $\rt$ telles que~: 
\begin{enumerate}
\item
il existe un entier $u\>1$ divisant $m$ 
tel que $\rt$ soit une représentation irréductible
cuspidale $\ell$-adique de $\GL_{u}(\D)$~;
\item
on a $w(\rt)=w$ et le niveau normalisé de $\rt$ est inférieur ou égal à $j$.
\end{enumerate}
C'est un ensemble fini, de cardinal noté $\boldsymbol{a}_{\ell}(\D,w,j)$.
Fixons par ailleurs une clôture al\-gé\-brique $\overline{\kk}$ du corps
résiduel de $\F$, notons $q$ le cardinal du corps résiduel de $\F$ et 
$\boldsymbol{y}_{\ell}^{1}(q,n,w)$ le nom\-bre de 
$y\in\overline{\kk}{}^\times$ tels que~:
\begin{enumerate}
\item
l'ordre de $y$ est premier à $\ell$~;
\item
le degré de $y$ sur le corps résiduel de $\F$,
noté $\deg(y)$, divise $nw^{-1}$~; 
\item
l'ordre de $q^{\deg(y)}$ 
dans $(\ZZ/\ell\ZZ)^\times$ est égal au 
plus grand diviseur de $w$ premier à $\ell$. 
\end{enumerate}
On a la formule suivante~;
pour la notion d'endo-classe, on renvoie au paragraphe \ref{ENDO} 
et à \cite{BSS}.

\begin{prop}
\label{comptagedeclasses}
On a~:
\begin{equation}
\label{comptagedeclassesFORMULEintro}
\boldsymbol{a}_{\ell}(\D,w,j) 
= \sum\limits_{\TT}\ \boldsymbol{y}_{\ell}^{1}(q(\TT),n(\TT),w),
\end{equation}
la somme portant sur les $\F$-endoclasses $\TT$ de niveau normalisé inférieur 
ou égal à $j$ et de degré $\deg(\TT)$ divisant $nw^{-1}$, et où~:
\begin{equation*}
n(\TT)=\frac{n}{\deg(\TT)},
\quad
q(\TT)=q^{f(\TT)},
\end{equation*}
l'entier $f(\TT)$ désignant le degré résiduel de $\TT$.
\end{prop}

Cette somme ne dépendant que de $\ell$, $n$, $w$, $j$ et $q$, 
on en déduit le corollaire suivant.

\begin{coro}
\label{DonaTartt}
On a $\boldsymbol{a}_{\ell}(\D,w,j)=\boldsymbol{a}_{\ell}(\F,w,j)$. 
\end{coro}

\subsection{}
\label{Mafoi}
\label{P16}

Revenons à la correspondance \eqref{PILT}. 
Comme dans \cite{Datj}, nous allons passer au dual de Zelevinski
et nous allons avoir besoin d'une version des propositions \ref{CongCusp} et 
\ref{CalculW} pour les représenta\-tions de Speh $\ell$-adiques de $\G$.
Décrivons plus en détail la structure de ces représentations.
Etant don\-née une représentation de Speh $\ell$-adique entière $\tp$,
il existe un unique diviseur $r$ de $m$ et une unique représentation 
irréductible cuspi\-dale $\cuspit$ de $\GL_{mr^{-1}}(\D)$ tels que $\tp$ soit 
l'unique sous-représentation irréductible, notée $\Z(\cuspit,r)$, de l'induite 
parabolique normalisée~:
\begin{equation}
\label{induite}
\cuspit\times\cuspit\nu_{\cuspit}^{}\times\dots\times\cuspit\nu_{\cuspit}^{r-1}
\end{equation}
où $\nu_{\cuspit}$ est un caractère non ramifié associé à $\cuspit$
(voir le \S \ref{PARA21}). 
On pose alors $w(\tp) = w(\cuspit)$.

\begin{defi}
\label{DEFlsspehintro}
On dit que $\tp$ est \textit{$\ell$-super-Speh} si $w(\tp)=1$.
\end{defi}

Dans le cas déployé, \ie quand $d=1$,
l'entier $a(\cuspit)$ vaut toujours $1$, 
\ie que la ré\-duc\-tion mod $\ell$ d'une représentation irréductible 
cuspidale $\ell$-adique de $\H$ est toujours ir\-ré\-duc\-ti\-ble. 
A l'autre extrême, si $m=1$, l'entier $k(\cuspi)$ vaut toujours $1$. 
Pris séparément, ces entiers ne peuvent donc pas être invariants par la 
correspondance \eqref{PILT}. 
Nous allons voir qu'en re\-van\-che leur produit $w(\tp)$ l'est.
Il joue un rôle important dans la preuve du théorème \ref{MAINTHEOREM}.
Tout d'abord, nous prouvons la formule suivante, qui généralise la formule 
\eqref{redcusp}. 

\begin{prop}
\label{ConjRedModlSpehIntro}
Soit $\cuspit$ une $\qlb$-représentation irréductible cuspidale 
entiè\-re de $\G$.
Soit $\cuspi$ un facteur irréductible de sa réduction
mod $\ell$, et soit $a=a(\cuspit)$.
Pour tout entier $r\>1$, on a~:
\begin{equation*}
\label{ForrZIntro}
\r_\ell(\Z(\cuspit,r)) = \sum\limits
\Z(\cuspi,r_0)\times\Z(\cuspi\nu,r_1)\times\dots\times\Z(\cuspi\nu^{a-1},r_{a-1})
\end{equation*}
la somme portant sur les familles $(r_0,\dots,r_{a-1})$ d'entiers $\>0$ de 
somme $r$.
\end{prop}

Cette proposition a pour conséquence 
une propriété remarqua\-ble de compa\-tibilité de la classi\-fication 
de Zelevinski de \cite{MSc} à la réduction mod $\ell$.
On renvoie au para\-graphe \ref{P51} pour les termes et les notations non 
définis, ainsi qu'à la proposition \ref{Zcong}.

\begin{prop}
\label{Zcongintro}
Soient $\cuspit_1$, $\cuspit_2$ deux représentations irréductibles cuspidales 
$\ell$-adiques en\-tiè\-res congruentes mod $\ell$, 
et soit $\upmu$ un multisegment formel. 
Alors les représentations 
$\Z(\upmu\boxtimes\cuspit_1)$ et $\Z(\upmu\boxtimes\cuspit_2)$ 
sont congruentes mod $\ell$. 
\end{prop}

\subsection{}

La \textit{classe de torsion} de $\tp$ est l'ensemble 
$\langle\tp\rangle$ des classes de représentations obtenues en tordant $\tp$ 
par un caractère non ramifié de $\G$. 
Lorsque $\tp$ est cuspidale, 
l'ensemble $\langle\tp\rangle$ est donc simplement la classe d'inertie de $\tp$.
Généralisant la notation du paragraphe \ref{Notrlic}, on note~:
\begin{equation*}
\r_\ell(\langle\tp\rangle)
\end{equation*}
l'ensemble des $\r_\ell(\tp\tc)$ où $\tc$ décrit les caractères non 
ramifiés $\ell$-adiques de $\G$ tels que la représen\-ta\-tion 
$\tp\tc$ soit en\-tiè\-re. 
Notons également $n(\tp)$ le nom\-bre de ca\-rac\-tè\-res $\ell$-adiques 
non ramifiés $\ct$ de $\G$
tels que $\tp\ct$ soit isomorphe à $\tp$ et $c(\tp)$
la plus grande puis\-san\-ce de $\ell$ divisant $q^{n(\tp)}-1$. 
La pro\-po\-si\-tion \ref{ConjRedModlSpehIntro} 
(jointe aux propositions \ref{CongCusp}, \ref{CalculW} et \ref{comptagedeclasses})
implique les deux résultats suivants.

\begin{theo}
\label{CongSpehIntro}
Soit $\tp$ une $\qlb$-représentation de Speh entière de $\G$. 
L'ensemble des clas\-ses de torsion de $\qlb$-représen\-tations de Speh
entières congrues à $\tp$ est fini, et son cardinal $t(\tp)$ vérifie~:
\begin{equation*}
t(\tp) w(\tp) = 
\left\{
\begin{array}{ll}
c(\tp) & \text{si $w(\tp)=1$,} \\
c(\tp)-1 & \text{si $1<w(\tp)<\ell$,} \\
c(\tp)(\ell-1)\ell^{-1} & \text{si $w(\tp)\>\ell$.}
\end{array}
\right.
\end{equation*}
\end{theo}

\begin{theo}
\label{ComptagePIIntro}
Soient $w$ un entier divisant $n$ et $j\>0$ un nombre ra\-tion\-nel. 
L'ensemble $\Ee_\ell(\G,w,j)$ des $\r_\ell(\langle\tp\rangle)$,
où $\tp$ décrit les représentations de Speh $\ell$-adiques de $\G$ 
telles que $w(\tp)=w$ et dont le ni\-veau normalisé est inférieur ou égal à 
$j$, est fini et de cardinal $\boldsymbol{a}_{\ell}(\D,w,j)$.
\end{theo}

D'après le corollaire \ref{DonaTartt}, les ensembles 
$\Ee_\ell(\G,w,j)$ et $\Ee_\ell(\H,w,j)$ ont donc le même cardinal. 

\subsection{} 
\label{ParIntro5}

A partir de là, la preuve du théorème \ref{MAINTHEOREM} se fait par récurrence 
sur $w(\tp)$.
Comme au para\-gra\-phe \ref{Intro4}, fixons une $\F$-algèbre à division de 
degré réduit $n$, et notons $\A$ son groupe multiplicatif.
La correspondance de Jacquet-Langlands locale $\ell$-adique 
détermine une bi\-jection entre $\Dd(\G,\qlb)$ et l'en\-semble $\XA(\A,\qlb)$ des
représentations ir\-ré\-ductibles $\ell$-adi\-ques de~$\A$, 
que l'on peut prolonger en un morphisme surjectif 
entre groupes de Grothendieck~:
\begin{equation}
\label{TJLell}
\TJL_\ell : 
\RA(\G,\qlb) \to \RA(\A,\qlb)
\end{equation} 
trivial sur les indui\-tes paraboliques 
(paragraphe \ref{DEFLJ}).
Grâce à la théorie du caractère de Brauer de Dat \cite{Datj} \S2.1, 
il y a un unique morphisme de groupes
$\JL_\ell$ de $\RA(\G,\flb)$ dans $\RA(\A,\flb)$
qui soit com\-pa\-ti\-ble à $\TJL_\ell$ par réduction mod $\ell$
(voir la proposition \ref{Leighton}),
ce qui permet de transpor\-ter les relations de 
congruence mod $\ell$ de $\G$ à $\A$.

Restreignant le morphisme \eqref{TJLell} à l'ensemble $\Zz(\G,\qlb)$ des classes 
d'isomorphisme de représen\-tations de Speh $\ell$-adiques de $\G$, l'image 
d'une re\-présenta\-tion de Speh $\tp$ est égale, 
à un signe~près, à une représentation irréductible de $\A$, 
correspondante de Jac\-quet-Langlands de la duale de 
Ze\-le\-vinski de $\tp$~;
on en déduit une bijection~:
\begin{equation}
\label{LJTl}
\Zz(\G,\qlb) \to \XA(\A,\qlb).
\end{equation}
L'existence de $\JL_\ell$ assure que des représentations entières 
de $\Zz(\G,\qlb)$ con\-gru\-en\-tes mod $\ell$ ont des ima\-ges dans 
$\XA(\A,\qlb)$ qui sont congruentes mod~$\ell$. 
Ensuite, grâce au théorème \ref{CongSpehIntro}, on montre que cette bijection 
préserve l'invariant $w(\tp)$ et qu'elle induit par ré\-duc\-tion 
modulo $\ell$, pour tout entier $w\>0$, une application injective
de $\Zz_w(\G,\flb)$ dans $\Zz_w(\A,\flb)$, où l'on a posé~:
\begin{equation}
\label{LJlintro}
\Zz_w(\G,\flb) 
= \{\r_\ell(\tp)\ |\ \tp\in\Zz(\G,\qlb) \text{ est entière et } w(\tp)=w\}
\subseteq\RA(\G,\flb).
\end{equation}
La correspondance de Jacquet-Langlands locale
préservant le niveau normalisé, et les éléments~de 
l'ensemble \eqref{LJlintro}
de niveau normalisé fixé étant 
-- à torsion non ramifiée près --
en nombre fini, 
le théorème \ref{ComptagePIIntro} et le corollaire \ref{DonaTartt} 
impliquent que cette application injective est une bijection. 
Appli\-quant à nouveau l'involution de Zelevinski \cite{MSb,MSi} 
pour revenir à $\Dd(\G,\qlb)$,
ceci met fin à la preuve du théorème \ref{MAINTHEOREM} 
(voir le théorème \ref{RogerCarbury}).

\subsection{}
\label{Finale}

Si l'on restreint \eqref{LJTl} à l'ensemble 
des représentations $\ell$-super-Speh, \ie aux $\tp$ vérifiant $w(\tp)=1$, 
on ob\-tient une bijection entre les représentations $\ell$-super-Speh de $\G$ 
et les représenta\-tions $\ell$-adiques entières de $\A$ dont la réduction
modulo $\ell$ est irréductible. 
Réduisant modulo $\ell$, on obtient le résultat suivant
(voir le corollaire \ref{superSpeh}) qui généralise \cite{Datj} Théorème 
1.2.4. 

\begin{coro}
\label{superSpehIntrotro}
La bijection \eqref{LJTl} induit une bijection entre 
re\-pré\-sen\-ta\-tions $\ell$-modulaires super-Speh de $\G$ et 
re\-pré\-sen\-ta\-tions $\ell$-modulaires irréductibles de $\A$. 
\end{coro}

\subsection{} 
\label{noneedtoswitch}

Signalons que, dans la preuve du théorème \ref{MAINTHEOREM},
il n'est pas à proprement parler né\-ces\-saire de passer par les 
représentations de Speh~: contrairement à \cite{Datj}, 
dont la preuve s'appuie sur le fait que
la ré\-duc\-tion mod $\ell$ de $\Z(\rt,r)$ est irréductible 
pour toute représentation 
$\ell$-supercuspidale~$\rt$ 
(ce qui n'est pas vrai de sa duale de Zelevinski), 
notre argument fonctionne encore si l'on utili\-se 
directement $\Dd(\G,\qlb)$ 
et les réductions mod~$\ell$ de ses éléments entiers. 
L'argument de comptage, qui~porte de toutes façons sur des 
ensembles \eqref{LJlintro}
de représenta\-tions qui sont en général non irréductibles, reste valable. 
Nous avons choisi d'utiliser les re\-pré\-sentations de Speh d'une part pour 
obtenir le corollaire \ref{superSpehIntrotro}, généralisant à une forme 
inté\-rieure quelconque la cor\-res\-pondance de Jacquet-Langlands locale 
mod $\ell$ de Dat, 
d'autre part parce que la proposition \ref{Zcongintro}
s'exprime au moyen de la classifica\-tion à la Zelevinski. 

\subsection{}

Abandonnons maintenant la forme intérieure auxilliaire $\A$, 
et laissons momentanément de côté la corres\-pondance de Jacquet-Langlands. 
Etant donnés 
une représentation ir\-ré\-ductible $\ell$-modu\-lai\-re $\rho$
de $\G$ supposée cuspidale mais pas supercuspidale
et un entier $r\>1$, 
la représentation~de Speh $\Z(\rho,r)$ doit en 
vertu de \cite{MSc} Lemme~9.41 s'exprimer, 
dans le groupe $\RA(\GL_{mr}(\D),\flb)$, 
dans la base des induites de représentations super-Speh~: 
\begin{equation*}
\pi_1\times\dots\times\pi_s,
\quad
\pi_i \text{ représentation super-Speh de } \GL_{m_i}(\D),
\quad
m_1+\dots+m_s=mr.
\end{equation*} 
Nous allons expri\-mer $\Z(\rho,r)$ non pas directement dans cette 
base, mais en fonction de représen\-tations de Speh associées~à une 
représentation 
irréductible cuspidale $\s$ de degré $<m$, 
et surtout 
telle que $k(\s)<k(\rho)$. 
Plus préci\-sé\-ment, posons~:
\begin{equation*}
e = 
\left\{
\begin{array}{ll}
k(\rho) & \text{si $k(\rho)$ est premier à $\ell$,} \\
\ell & \text{sinon.}
\end{array}
\right.
\end{equation*}
Il y a alors une représentation irréductible cuspidale $\s$ de 
$\GL_{me^{-1}}(\D)$ telle que $\rho$ soit isomorphe à un facteur irréductible 
de $\s\times\s\nu_\s^{}\times\dots\times\s\nu_\s^{e-1}$.
Introduisons les séries formelles~: 
\begin{eqnarray*}
\SFS &=& \sum\limits_{r\>0} {(-1)^{r}} \Z(\rho,r) \X^{er}, \\
\SFZ(a,b) &=& \sum\limits 
(-1)^{r} \Z(\s\nu_\s^{a},r) \X^{r},
\quad
a,b\in\ZZ,
\end{eqnarray*}
la seconde somme portant sur les $r\>0$ qui sont congrus à $b-a+1$ mod $e$. 
Notant~$\SFZ$ la matrice carrée de taille $e$ 
de terme général $\SFZ(i+1,j)$ pour $i,j\in\{1,\dots,e\}$, 
on obtient en s'inspirant de \cite{KL} la formule suivante. 

\begin{prop}
\label{FORUMEZD}
Le déterminant $\det(\SFZ)$ est égal à $\SFS$.
\end{prop}

\subsection{}

Si l'on étend par linéarité la correspondance \eqref{PILT} en un morphisme 
de groupes de $\RA(\G,\qlb)$ vers $\RA(\H,\qlb)$, on sait 
(voir \cite{Datj} (1.2.2))
qu'il n'y a pas de morphisme de $\RA(\G,\flb)$ vers $\RA(\H,\flb)$ qui lui 
soit compatible par réduction mod $\ell$.
En d'autres termes, la correspondance induite 
par le théorème \ref{MAINTHEOREM} entre réductions mod $\ell$ de séries 
discrètes entières $\ell$-adiques ne s'étend pas aux groupes de 
Grothendieck. 
Définissons maintenant comme Badules\-cu~\cite{BaduJIMJ} \S 3.1 
un morphisme~:
\begin{equation}
\label{TLJlintro}
\TLJ_\ell : \RA(\H,\qlb) \to \RA(\G,\qlb)
\end{equation} 
trivial sur les indui\-tes paraboliques à partir d'un sous-groupe 
de Levi de $\H$ dont les blocs ne~sont pas tous de taille divisible par $d$, 
et interpolant la réciproque de la correspondance \eqref{PILT} sur les 
induites paraboliques des séries discrètes $\ell$-adiques de sous-groupes 
de Levi dont les blocs sont de taille divisible par $d$
(voir le \S \ref{BaduTLJ}).
Il est naturel de demander s'il y a un morphisme compatible à 
$\TLJ_\ell$ par réduction mod $\ell$.
Le théorème suivant répond à cette question par l'affirmative.

\begin{theo}
\label{caciqueintro}
Il y a un unique morphisme de groupes $\LJ_\ell$ de 
$\RA(\H,\flb)$ vers $\RA(\G,\flb)$ 
qui soit com\-pa\-ti\-ble à $\TLJ_\ell$ par réduction mod $\ell$. 
\end{theo}

Dans le cas où $\G$ est une forme intérieure compacte modulo le centre, 
on se retrouve dans la situation du paragraphe \ref{ParIntro5},
où l'on sait 
que la réponse est oui grâce à la théorie du caractère de Brauer de Dat.
Dans le cas général, cet argument ne suffit plus.
Pour prouver le résultat,~on introduit l'anneau (commutatif)
de Grothendieck~: 
\begin{equation*}
\RA(\D,\flb) = \bigoplus\limits_{m\>0} \RA(\GL_m(\D),\flb)
\end{equation*}
qui est libremement engendré par l'ensemble des représentations 
$\ell$-modulaires super-Speh.
Grâce au corollaire \ref{superSpehIntrotro}, il y a un unique 
morphisme surjectif d'anneaux $\LJ_\ell$ de 
$\RA(\F,\flb)$ vers $\RA(\D,\flb)$ tel qu'on ait l'égalité~:
\begin{equation}
\label{resteafaireintro2}
\Big(\LJ_\ell\circ\r_\ell\Big)(\tp) = \Big(\r_\ell\circ\TLJ_\ell\Big)(\tp)
\end{equation}
pour toute représentation $\ell$-adique $\tp$
entière \textit{et $\ell$-super-Speh} de $\GL_n(\F)$, $n\>1$.
Pour prouver le théorème, il faut alors prouver que 
\eqref{resteafaireintro2} vaut 
pour toute représentation de Speh $\ell$-adique entière $\tp$, 
pas nécessairement $\ell$-super-Speh.
Pour ce faire, il s'agit de décrire 
explicitement la réduction mod $\ell$ de $\tp$ dans la base des 
représentations super-Speh, ce qui se fait de proche en proche grâce à 
la proposition \ref{FORUMEZD} et à la formule de factorisation donnée 
par la proposition \ref{Flyte}.

\section{Préliminaires}
\label{GeEm}

\subsection{}
\label{GeEm1}

Fixons 
un corps localement compact non archi\-médien $\F$ 
de caracté\-ristique résiduelle $p$.
Notons $q$ le cardinal de son corps résiduel.

Fixons une $\F$-algèbre à division centrale $\D$ de dimension finie, 
et de degré réduit noté $d$. 
Pour tout $m\>1$, on note $\Mat_{m}(\D)$ la 
$\F$-algèbre des matrices carrées de taille $m$ à coefficients dans $\D$
et $\GL_{m}(\D)$ le groupe de ses élé\-ments in\-ver\-si\-bles, noté aussi 
$\G_{m}$. 
Celui-ci est un groupe localement profini. 
On convient de noter $\G_{0}$ le groupe trivial. 

Soit $\R$ un corps algébriquement clos de 
caractéristique différente de $p$. 
Pour tout entier $m\>0$, 
notons $\XA(\G_m,\R)$ l'ensemble des classes d'iso\-mor\-phisme de
représen\-ta\-tions ir\-réductibles de $\G_m$
et $\RA(\G_m,\R)$ le groupe de Gro\-then\-dieck des 
re\-pré\-sen\-ta\-tions de lon\-gueur finie de $\G_m$ identifié au groupe 
abélien libre de base $\XA(\G_m,\R)$.
Posons~:
\begin{equation}
\label{vieillenot}
\XA(\R) = \XA(\D,\R) = \bigcup\limits_{m\>0} \XA(\G_{m},\R),
\quad
\RA(\R) = \RA(\D,\R) = \bigoplus\limits_{m\>0} \RA(\G_m,\R).
\end{equation}
(Toutes les représentations considérées dans cet article sont des
représentations lisses de grou\-pes localement profinis.) 
Si $\pi$ est une représentation de longueur finie de $\G_m$, 
l'entier $m$ s'appelle le {\it degré} de $\pi$.
Ceci fait de $\RA(\R)$ un $\ZZ$-module gradué.

Si $\pi$ est une représentation et $\chi$ un caractère 
--- \ie un mor\-phis\-me de groupes à va\-leurs dans $\mult\R$ 
et de noyau ouvert --- de $\G_m$, il existe un unique 
caractère $\mu$ de $\mult\F$ tel que $\chi$ soit égal à $\mu\circ{\rm Nrd}$, 
où ${\rm Nrd}:\G_m\to\mult\F$ désigne la norme réduite. 
On note $\pi\cdot\mu$ ou $\pi\chi$ la représentation tordue 
$g\mapsto\chi(g)\pi(g)$.

\subsection{}

Si $\a=(m_{1},\ldots,m_{r})$ est une {composition} de $m$, 
\ie une famille finie d'entiers positifs de som\-me $m$,
il lui correspond le 
sous-groupe de Levi stan\-dard $\M_{\a}$ de $\G_{m}$ constitué des matrices 
diagonales par blocs de tailles $m_{1},\ldots,m_{r}$ respecti\-ve\-ment, 
identifié à $\G_{m_{1}}\times\cdots\times\G_{m_{r}}$. 
On note $\P_{\a}$ le sous-groupe pa\-ra\-bo\-li\-que de $\G_{m}$ engendré par 
$\M_\a$ et les matrices triangulaires su\-pé\-rieures. 

On fixe une racine carrée de $q$ dans $\R$.
On note $\ip_\a$ le foncteur d'in\-duc\-tion para\-bo\-li\-que
(norma\-li\-sé relativement au choix de cette racine)
de $\M_{\a}$ à $\G_{m}$ le long de $\P_{\a}$,
et on note 
$\rp_\a$ son adjoint à gau\-che, \ie le foncteur de restriction 
para\-bo\-li\-que lui correspondant.
Ces foncteurs sont exacts et préservent le fait d'être de longueur finie.

Si, pour chaque $i\in\{1,\ldots,r\}$, on a une 
représentation $\pi_{i}$ de $\G_{m_i}$, on note~: 
\begin{equation}
\label{VentreDieu}
\pi_1\times\cdots\times\pi_r=\ip_{\a}(\pi_1\otimes\cdots\otimes\pi_r).
\end{equation}
Si les $\pi_{i}$ sont de longueur finie, la semi-simplifiée de cette induite
ne dépend que des semi-sim\-plifiées de $\pi_{1},\dots,\pi_{r}$.
Ceci fait de $\RA(\R)$ une $\ZZ$-al\-gè\-bre com\-mu\-tative gra\-duée 
(voir \cite{MSc} Proposition 2.6). 

Au moyen des foncteurs de restriction parabolique, on définit également une 
comultiplication~:
\begin{equation}
\label{VentreDieu2}
\com : \pi\mapsto\sum\limits_{k=0}^{m} \rp_{(k,m-k)}(\pi) \in \RA(\R)\otimes\RA(\R)
\end{equation}
faisant de $\RA(\R)$ une $\ZZ$-bigèbre commutative. 

Comme dans \cite{MSi}, on adopte encore les notations 
$\ip_\a$, $\rp_\a$ et $\times$
quand $\D$ est rem\-pla\-cé par un corps fini de caractéristique $p$. 

\section{Rappels et compléments sur les représentations cuspidales}

On fixe un entier $m\>1$ et on pose $\G=\GL_m(\D)$.
Pour les notions de représentation irréduc\-tible cuspidale et supercuspidale de 
$\G$, on renvoie le lecteur à \cite{MSc}.

Aux paragraphes \ref{PARA21} et \ref{P32},
$\R$ est un corps algébriquement clos de 
caractéristique différente de $p$.

Dans cette section, nous rappelons brièvement comment décrire les 
représentations cuspidales irréductibles de $\G$ 
en termes de représentations 
de certains sous-groupes ouverts compacts mod le centre
\cite{MSt}.
Grâce à cette description, on associe à toute 
représentation irréductible cuspidale 
un certain nombre d'invariants numériques, et on définit l'invariant $w$
au paragraphe \ref{PARADEFW}.

\subsection{}
\label{PARA21}

Rappelons quelques faits tirés de \cite{MSt} sur les 
$\R$-représentations irréductibles cuspidales de $\G$.
D'abord, il y a une correspondance bijective naturelle~:
\begin{equation}
\label{sichuan}
[\G,\rho] \leftrightarrow [\J,\l]
\end{equation}
entre classes 
inertielles de $\R$-représentation irréductible cuspidale de $\G$ et 
classes de $\G$-conjugai\-son d'objets appelés types simples maximaux 
de $\G$ (\cite{MSt} \S3).
Plus précisément, la classe d'inertie de $\rho$ et la classe de conjugaison 
de $(\J,\l)$ se correspondent par \eqref{sichuan} 
si et seulement si la restric\-tion de $\rho$ à
$\J$ possède une sous-représentation isomorphe à $\l$. 

Un type simple maximal de $\G$ est une paire $(\J,\l)$ formée d'un 
sous-groupe ouvert compact $\J$ de $\G$ et d'une $\R$-repré\-sen\-ta\-tion 
irréductible $\l$ de $\J$ dont la construction est effectuée en \cite{MSt} \S2.
Résumons-en brièvement les prin\-ci\-pa\-les étapes. 

D'abord, on part d'une strate simple $[\La,n_\La,0,\b]$ dans la $\F$-algèbre 
$\Mat_m(\D)$ 
et d'un $\R$-caractère sim\-ple $\t\in\Cc(\La,0,\b)$ d'un sous-groupe ouvert 
compact $\H^1=\H^1(\b,\La)$ de $\G$.
Il y a un sous-groupe ouvert compact $\J^1=\J^1(\b,\La)$ de $\G$ contenant 
et normalisant $\H^1$, possédant une unique re\-pré\-sen\-tation irréductible $\eta$ 
dont la restriction à $\H^1$ contienne $\t$.

La représentation $\eta$ se prolonge en une représentation irréductible $\k$ d'un 
sous-groupe ouvert compact $\J=\J(\b,\La)$ de $\G$ contenant 
et normalisant $\J^1$, de même ensemble d'entrelacement que $\eta$~;
un tel prolongement $\k$ s'appelle une \textit{$\b$-extension} de $\eta$.

On suppose que $\J\cap\mult\B$ est un sous-groupe compact
maximal de $\mult\B$.
On fixe un isomorphisme d'algèbres entre le centralisateur $\B$ de $\E=\F[\b]$ 
dans $\Mat_m(\D)$ et une $\E$-algèbre $\Mat_{m'}(\D')$ pour un $m'\>1$ 
et une $\E$-algèbre 
à division centrale $\D'$ convenables, iden\-ti\-fiant $\J\cap\mult\B$ au
sous-groupe compact maximal standard de $\GL_{m'}(\D')$.

Le groupe $\J$ est égal à $(\J\cap\mult\B)\J^1$, et on a des isomorphismes de 
groupes~: 
\begin{equation*}
\label{iso}
\J/\J^1 \simeq (\J\cap\mult\B)/(\J^1\cap\mult\B) \simeq \GL_{m'}(\kd),
\end{equation*}
$\kd$ étant le corps résiduel de $\D'$ et le second isomorphisme 
étant induit par l'isomorphisme d'algè\-bres fixé pré\-cédemment. 
Notons $\GB$ ce dernier groupe et fixons une re\-présentation irréductible 
cus\-pidale $\s$ de $\GB$.
Elle définit, par inflation, une re\-pré\-sen\-tation irréductible de $\J$ 
triviale sur $\J^1$,
encore notée $\s$. 
Alors la paire $(\J,\k\otimes\s)$ est un type simple maximal de $\G$, et tous 
sont construits de cette façon.

Soit $\rho$ une $\R$-représentation irréductible cuspidale de $\G$ dont la 
classe inertielle $[\G,\rho]$
corres\-pond à la classe de conjugaison d'un type simple maximal $(\J,\l)$. 
Le groupe de Galois de $\kd$ sur $\ke$ (où $\ke$ est le corps résiduel de
$\E$) agit sur les représentations
de $\GB$~; on note~:
\begin{equation}
\label{DEFSRHO}
s(\rho) = s(\s)
\end{equation}
l'ordre du stabilisateur de $\s$ dans ce groupe de Galois.
Notant $\nu$ le caractère non ramifié ``valeur absolue de la norme réduite'', 
le caractère $\nu_\cuspi = \nu^{s(\cuspi)}$ a la propriété importante suivante. 

\medskip

\begin{tabular}{lp{14cm}}
{(\ref{PARA21}.1)} & 
\textit{Si $\cuspi'$ est une représentation irréductible cuspidale de degré $m'\>1$, 
l'induite $\cuspi\times\cuspi'$ est réductible si et seulement si $m'=m$ et 
$\cuspi'$ est isormorphe à $\cuspi\nu_\cuspi^{}$ ou à $\cuspi\nu_{\cuspi}^{-1}$.} \\
\end{tabular}

\medskip

\noindent
Quand $\G$ est déployé, \ie quand $\D$ est égale à $\F$, 
le groupe de Galois $\Gal(\kd/\ke)$
est trivial et on a toujours $s(\rho)=1$.

Notons $n(\rho)$ 
le nombre de caractères non ramifiés $\chi$ de $\G$ tels que la 
représentation tordue $\rho\chi$ soit isomorphe à $\rho$.
Cet entier a la propriété suivante. 

\medskip

\begin{tabular}{lp{14cm}}
{(\ref{PARA21}.2)} & 
\textit{Si $\chi$ est un caractère non ramifié de $\G$, on a
$\rho\chi\simeq\rho$ si et seulement si $\chi^{n(\rho)}=1$.} \\
\end{tabular}

\medskip

Notons enfin $f(\rho)$ 
le quotient de $md$ par l'indice de ramification 
de $\E$ sur $\F$, qui est un multiple de $s(\rho)$. 
Ces trois entiers sont indépendants des choix 
effectués dans la construction de $(\J,\l)$ et ils ne dépendent 
que de la classe inertielle de $\rho$.
Notant $\ell$ l'exposant caractéristique de $\R$, 
il sont liés par la relation suivante. 

\medskip

\begin{tabular}{lp{14cm}}
{(\ref{PARA21}.3)} & 
\textit{L'entier $n(\rho)$ est le plus grand diviseur de $f(\rho)s(\rho)^{-1}$ 
premier à $\ell$.} \\
\end{tabular}

\medskip

\noindent
Notamment, 
lorsque $\R$ est de caractéristique $0$, on a simplement la relation
$f(\rho) = n(\rho)s(\rho)$. 

\subsection{}
\label{P32}

Pour tout entier $n\>1$, l'induite parabolique~: 
\begin{equation*}
\label{INDRHON}
\cuspi\times\cuspi\nu_{\cuspi}^{}\times\dots\times\cuspi\nu_{\cuspi}^{n-1}
\end{equation*}
a un unique sous-quotient irréductible résiduelle\-ment non dégénéré 
au sens de \cite{MSc} Section 8.
Il apparaît avec multiplicité $1$, et on le note~:
\begin{equation*}
\label{DEFSP}
\Sp(\cuspi,n).
\end{equation*}
Si $n\>2$, 
pour que ce sous-quotient soit cuspidal, 
il faut et il suffit que 
$\R$ soit de caractéristique $\ell>0$ et qu'il existe un entier $r\>0$ tel que 
$n=\omega(\cuspi)\ell^r$, avec~:
\begin{equation}
\label{DEFOMEGA}
\omega(\cuspi) = 
\text{l'ordre de $q^{f(\rho)}$ dans $(\ZZ/\ell\ZZ)^\times$}
\end{equation}
(voir \cite{MSc} Proposition 6.4).
D'après \cite{MSc} Corollaire 6.12, si la représentation 
$\cuspi$ n'est pas super\-cuspidale, on a $\omega(\cuspi)=1$. 

Lorsque $\R$ est de caractéristique $\ell$,
il y a d'après \cite{MSc} Théorème 6.14 un unique entier naturel~:
\begin{equation*}
\label{DEFkrho}
k(\cuspi) \>1
\end{equation*}
et une re\-pré\-sen\-tation irréductible supercuspidale 
$\alpha$ de $\GL_{mk(\cuspi)^{-1}}(\D)$ tels que $\cuspi$ soit isomorphe à 
$\Sp(\alpha,k(\cuspi))$.
La représentation $\a$ n'est pas unique en général mais, 
si $\pi$ est irréductible supercus\-pidale et si 
$\cuspi$ est iso\-morphe à 
$\Sp(\pi,k(\cuspi))$, il y a un $i\in\ZZ$ tel que $\pi$ soit 
isomorphe à $\alpha\nu_\alpha^i$.

\begin{rema}
La représentation $\cuspi$ est supercuspidale si et seulement si $k(\cuspi)=1$.
\end{rema}

De façon analogue, l'induite parabolique 
$\s^{\times n}=\s\times\dots\times\s$ ($n$ fois), 
qui est une représentation du groupe $\GL_{m'n}(\kd)$, 
a un unique sous-quotient irré\-duc\-ti\-ble non dégénéré.
On le note $\sp(\s,n)$~; 
il apparaît avec multiplicité $1$ dans $\s^{\times n}$.
Si $n\>2$, 
pour que ce sous-quotient soit cuspidal, 
il faut et il suffit que 
$\R$ soit de caractéristique $\ell>0$ et qu'il existe un entier $r\>0$ tel que 
$n=\omega(\s)\ell^r$, avec~:
\begin{equation*}
\label{DEFOMEGAfini}
\omega(\s) = \text{l'ordre de $q_{\D'}^{m'}$
dans $(\ZZ/\ell\ZZ)^\times$}
\end{equation*}
où $q_{\D'}^{}$ désigne le cardinal du corps résiduel $\kd$.
Remarquons que $q_{\D'}^{m'}$ est égal à $q^{f(\rho)}$,
et donc que $\omega(\rho)=\omega(\s)$.
Lorsque $\R$ est de caractéristique $\ell>0$,
il y a un unique entier naturel~:
\begin{equation*}
\d(\s) \> 1
\end{equation*}
et une unique représentation irré\-duc\-tible supercuspidale 
$\tau$ de $\GL_{m'k(\s)^{-1}}(\kd)$ 
tels que la repré\-sen\-tation $\s$ soit iso\-morphe à $\sp(\tau,\d(\s))$.
On a le résultat suivant. 

\begin{lemm}
\label{EGkrks}
On a $\d(\rho)=\d(\s)$.
\end{lemm}

\begin{proof}
Posons $\d=\d(\rho)$ et écrivons $\rho$ sous la forme $\Sp(\alpha,k)$. 
Fixons un type simple maximal $(\J_0,\k_0\otimes\s_0)$ contenu dans $\a$. 
D'après, par exemple, la preuve de \cite{MSc} Lemme 6.1, 
on peut choisir $\s_0$ de sorte que $\s$ soit égale à $\sp(\s_0,k)$. 
D'après \cite{MSc} Proposition 6.10, la re\-présen\-tation $\s_0$ est 
supercuspidale.
Par unicité de $\d(\s)$, on en déduit que $\d(\s)=\d$.
\end{proof}

\begin{rema}
\label{gavroche}
On en déduit que $\d(\rho)$ divise $m'$, et pas seulement $m$.
\end{rema}

Notons (voir \cite{MSc} Proposition 6.10) 
que $\rho$ est supercuspidale si et seulement si $\s$ l'est. 

\subsection{}
\label{P13}

Fixons un nombre premier $\ell$ différent de $p$
et une clôture algébrique $\qlb$ du corps des nombres $\ell$-adiques.
On note $\zlb$ son anneau d'entiers et $\flb$ son corps résiduel.
Une représentation \textit{$\ell$-adique} 
est une représentation à coefficients dans $\qlb$. 

Pour les notions de représentation $\ell$-adique entière 
et de réduction mod $\ell$, on renvoie le lecteur à \cite{MSc}
(voir aussi \cite{Vigb,Vigw}). 
Si $\tp$ est une représentation $\ell$-adique irréductible entière 
dans $\RA(\qlb)$, on note $\r_\ell(\tp)$ sa réduction mod $\ell$ 
dans $\RA(\flb)$.

\begin{defi}
Deux $\qlb$-représentations irréductibles entières de $\G$ 
sont dites \textit{congruentes} (modulo $\ell$) si elles ont 
la même réduction mod $\ell$, \ie la même image par $\r_\ell$.
\end{defi}

Soit $\cuspit$ une représentation $\ell$-adique irréductible cuspida\-le
entière de $\G$.
D'après le paragraphe \ref{P14}, si $\rho$ est un facteur irréductible, 
et $a$ la longueur, de la réduction mod $\ell$ de $\rt$, alors on a~:
\begin{equation}
\label{Redrt}
\r_\ell(\cuspit) = \cuspi+\cuspi\nu+\dots+\cuspi\nu^{a-1}.
\end{equation}
D'après \cite{MSt} Paragraphe 3.5, l'entier $a=a(\rt)$ a les propriétés 
suivantes. 

\begin{prop}
\label{propa}
\begin{enumerate}
\item
Il y a un entier $u\>0$ tel que $a$ vérifie la relation~:
\begin{equation}
\label{nan}
n(\cuspit) = an(\cuspi)\ell^{u}.
\end{equation}
Plus précisément, $n(\cuspi)$ est le plus grand diviseur de $n(\cuspit)a^{-1}$ 
premier à $\ell$.
\item
Si $a>1$, alors le plus grand diviseur de $a$ premier à $\ell$ est égal à~:
\begin{equation}
\label{DEFERHO}
\ee(\cuspi) = 
\text{l'ordre de $q^{n(\cuspi)}$ dans $(\ZZ/\ell\ZZ)^\times$}.
\end{equation}
\item
On a l'égalité $s(\cuspi)=as(\cuspit)$.
\end{enumerate}
\end{prop}

L'entier $\ee(\cuspi)$ défini par l'identité \eqref{DEFERHO} a la propriété suivante. 

\begin{lemm}
\label{STEP0}
Soit un entier $i\in\ZZ$.
Pour que $\rho\nu^i\simeq\rho$, il faut et il suffit que $\e(\rho)$ 
divise $i$.
\end{lemm}

\begin{proof}
D'après (\ref{PARA21}.2), 
les représentations $\rho\nu^i$ et $\rho$ sont isomorphes 
si et seulement si $\nu^{n(\rho)i}=1$. 
L'ordre de $\nu$ étant égal à 
l'ordre de $q$ dans $(\ZZ/\ell\ZZ)^\times$, qu'on note $e$, 
ceci équivaut à dire que $e$ divise $n(\rho)i$.
Il ne reste plus qu'à remarquer que~:
\begin{equation*}
\e(\rho) = \frac{e}{(e,n(\rho))}
\end{equation*}
pour conclure. 
\end{proof}

\begin{coro}
\label{COROEPS1}
On a $\rho\nu\simeq\rho$ si et seulement si l'entier $\e(\rho)$ est égal à $1$.
\end{coro}

\subsection{}
\label{PARA22}

Fixons une extension $\kk$ de $\kd$ de degré $m'$, 
et notons $\X$ l'ensemble des $\xt\in\mult\kk$ de degré $m'$ sur 
$\kd$.  
D'après Green \cite{Green}, il y a une application surjective~:
\begin{equation}
\label{green}
\xt\mapsto\boldsymbol{\st}(\xt)
\end{equation}
de $\X$ vers l'ensemble des classes de représen\-ta\-tions irréduc\-ti\-bles 
cuspidales $\ell$-adiques de $\GB$~;
les antécédents de $\boldsymbol{\st}(x)$ sont les conjugués de $x$
sous $\Gal(\kk/\kd)$.
Pour $x\in\mult\kk$, notons $[x]$ l'orbite de $x$ sous $\Gal(\kk/\ke)$ et~: 
\begin{equation*}
\deg(x) = {\rm card}([\xt])
\end{equation*}
le degré de $x$ sur $\ke$.
Notons $d'$ le degré réduit de $\D'$ sur $\E$.

\begin{lemm}
\label{lemsm}
Pour $x\in\X$, soit $\st$ la représentation cuspidale lui
correspondant par \eqref{green}.
On a la relation~: 
\begin{equation}
\label{DEGX}
\deg(x) = \frac{m'd'}{s(\st)}
\end{equation}
et $s(\st)$ est premier à $m'$.
\end{lemm}

\begin{proof}
Notons $\phi$ l'automorphisme de Frobenius $x\mapsto x^{q_\E}$, 
où $q_{\E}$ désigne le cardinal du corps résiduel $\ke$.
Pour $k\in\ZZ$, on a~:
\begin{equation*}
\st^{\phi^k}\simeq\st
\quad\Leftrightarrow\quad
\text{il existe $l\in\ZZ$ tel que $x^{q_\E^k}=x^{q_{\D'}^l}$}.
\end{equation*}
Si l'on note $[\st]$ l'orbite de $\st$ sous $\Gal(\kd/\ke)$, 
on en déduit que~:
\begin{equation*}
{\rm card}([\st]) 
= \frac{d'}{s(\st)}
= (d',\deg(x)).
\end{equation*}
Par ailleurs, si $n$ est l'ordre de $x$,
alors l'ordre de $q_\E$ dans $(\ZZ/n\ZZ)^\times$ est $\deg(x)$, 
tandis que l'ordre de $q_{\D'}$ dans $(\ZZ/n\ZZ)^\times$ est $m'$.
On en déduit le résultat voulu. 
\end{proof}

\begin{coro}
\label{eponine}
L'entier $s(\st)$ est premier à $m$.
\end{coro}

\begin{proof}
Notons $g$ le degré de $\E$ sur $\F$, de sorte que~:
\begin{equation}
\label{ddpmmp}
d' = \frac{d}{(d,\gg)},
\quad
m = m' \cdot \frac{g}{(d,\gg)}.
\end{equation}
L'entier $s(\st)$ divise $d'$, et il est premier à $m'$ d'après le lemme 
\ref{lemsm}~; le résultat s'ensuit. 
\end{proof}

D'après Dipper et James \cite{Dippd2,DJ,James}, si $x\in\X$,
la réduction modulo $\ell$ de $\boldsymbol{\st}(\xt)$ est 
irréductible et cuspidale, et ne dépend que de la partie $\ell$-régulière 
de $\xt$, \ie de l'unique $y\in\mult\kk$ d'ordre premier à $\ell$
tel que l'ordre de $\xt y^{-1}$ soit 
une puis\-san\-ce de $\ell$.
Ceci définit une application surjective~:
\begin{equation}
\label{james}
y\mapsto\boldsymbol{\s}(y)
\end{equation}
de l'ensemble $\Y$ des 
parties $\ell$-régulières des éléments de $\X$ vers celui des (classes de)
représen\-ta\-tions irréductibles cuspidales $\ell$-modulaires de $\GB$~;
l'ensemble des antécédents de $\boldsymbol{\s}(y)$ est l'orbite de $y$ sous 
le groupe de Galois de $\kk$ sur $\kd$.

\begin{lemm}
Pour $y\in\Y$, soit $\s$ la représentation cuspidale lui
correspondant par \eqref{james}.
On a la relation~: 
\begin{equation}
\label{DEGY}
\deg(y) = \frac{m'}{\d(\s)}\cdot\frac{d'}{s(\s)}
\end{equation}
et $s(\s)$ est premier à $m'\d(\s)^{-1}$.
\end{lemm}

\begin{proof}
Si l'on note $m''$ le cardinal de l'orbite de $y$ sous 
$\Gal(\kk/\kd)$, alors l'entier $\d(\s)$ défini au paragraphe \ref{P32}
vérifie la relation $m'=\d(\s)\cdot m''$.
Comme dans le lemme précédent, on en déduit la relation \eqref{DEGY} et que 
$s(\s)$ est premier à $m''=m'\d(\s)^{-1}$.
\end{proof}

De façon analogue au corollaire \ref{eponine},
on en déduit le corollaire suivant.

\begin{coro}
\label{eponine3}
L'entier $s(\s)$ est premier à $m\d(\s)^{-1}$.
\end{coro}

Soit $x\in\X$, et soit $y\in\Y$ la partie $\ell$-régulière de $x$.
Soit $\st$ la représentation cus\-pidale $\ell$-adi\-que 
correspondant à $x$ et $\s$ sa réduction modulo $\ell$, qui correspond à $y$.
On pose~:
\begin{eqnarray*}
a(\st) &=& \frac{s(\s)}{s(\st)}, \\
w(\st) &=& \frac{\deg(x)}{\deg(y)} 
= \d(\s)a(\st).
\end{eqnarray*}
On a les propriétés suivantes. 

\begin{lemm}
\label{CN}
On a $(w(\st),m')=(w(\st),m)=\d(\s)$ et $(w(\st),s(\s))=a(\st)$.
\end{lemm}

\begin{proof}
Comme $s(\st)$ est premier à $m'$, il est aussi premier à $\d(\s)$. 
Multipliant par $a(\st)$, on en déduit que $(w(\st),s(\s))=a(\st)$.
Ensuite, $mk(\s)^{-1}$ étant premier à $s(\s)$, il est aussi premier à 
$a(\st)$.
Multipliant par $\d(\s)$, on en déduit que $(w(\st),m)=\d(\s)$.
L'entier $k(\s)$ divisant à la fois $w(\s)$ et $m'$, il s'ensuit 
que $(w(\st),m')=\d(\s)$.
\end{proof}

Notons $\e(\s)$ l'ordre de~:
\begin{equation}
\label{QEKDY}
q_\E^{\deg(y)\cdot\d(\s)}
\end{equation}
dans $(\ZZ/\ell\ZZ)^\times$. 

\begin{lemm}
\label{CN2}
Si $x\neq y$,
alors le plus grand diviseur de $a(\st)$ premier à $\ell$ est $\e(\s)$.
\end{lemm}

\begin{proof}
Dans $(\ZZ/n\ZZ)^\times$ (où $n$ désigne l'ordre de $x$), 
l'ordre de \eqref{QEKDY} est~:
\begin{equation*}
\frac{\deg(x)}{(\deg(y) \d(\s),\deg(x))}=a(\st).
\end{equation*}
Comme $x\neq y$, l'entier $n$ est divisible par $\ell$.
En projetant sur $(\ZZ/\ell\ZZ)^\times$, on en déduit que le plus grand 
diviseur de $a(\st)$ premier à $\ell$ est $\e(\s)$.
\end{proof}

\begin{rema}
\label{FLO}
\begin{enumerate}
\item
La condition $x\neq y$ signifie que $\s$ n'est pas supercuspidale. 
\item
Si $\rho$ contient le type simple maximal $\k\otimes\s$,
alors les relations \eqref{DEGY} et (\ref{PARA21}.3) entraînent que $\e(\s)$ 
est égal à l'entier $\e(\rho)$ défini par \eqref{DEFERHO}. 
\end{enumerate}
\end{rema}

\subsection{}
\label{PARADEFW}

Soit $\cuspit$ une représentation $\ell$-adique irré\-duc\-tible cuspida\-le
entière de $\G$ comme au paragraphe \ref{P13}.
Soient $\rho$ un facteur irré\-duc\-tible de $\r_\ell(\rt)$ et $a=a(\rt)$ sa longueur.
On pose~:
\begin{equation}
\label{DEFw}
w(\cuspit) = k(\cuspi) a(\cuspit).
\end{equation}
La représentation $\cuspit$ est $\ell$-super\-cuspidale au sens de la 
définition \ref{DEFlssintro} si et seulement si $w(\cuspit)=1$. 

\begin{rema}
Si $\rt$ contient un type simple maximal se décomposant sous 
la forme $\kt\otimes\st$,
alors \eqref{DEFSRHO} et le lemme \ref{EGkrks} entraînent
les égalités $a(\rt)=a(\st)$ et $w(\rt)=w(\st)$. 
\end{rema}

D'après le paragraphe \ref{PARA21}, 
il y a une représentation ir\-ré\-ductible super\-cuspidale $\alpha$ de 
$\G_{mk(\rho)^{-1}}$ tels que $\cuspi$ soit isomorphe à $\Sp(\alpha,k(\rho))$.

\begin{lemm}
\label{NASR}
On suppose que $\rho$ n'est pas supercuspidale. 
Alors l'entier $\ee(\cuspi)$ est égal au 
plus grand diviseur commun à $\ee(\alpha)$ et $s(\alpha)$.
\end{lemm}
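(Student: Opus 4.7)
The plan is to compute the torsion number $n(\cuspi)$ in terms of the invariants of $\alpha$, and then extract $\ee(\cuspi)$ by an elementary gcd manipulation.

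First, the structural identity $\cuspi\chi = \Sp(\alpha\chi, k)$, for any unramified character $\chi$ of $\G_m$, reduces the condition $\cuspi\chi = \cuspi$ to the equality, as multisets, of the supercuspidal supports of $\Sp(\alpha\chi, k)$ and $\Sp(\alpha, k)$, namely $\{\alpha\chi\nu_\alpha^i\}_{i=0}^{k-1}$ and $\{\alpha\nu_\alpha^i\}_{i=0}^{k-1}$. Since $\cuspi$ is cuspidal, in the case $k>1$ one has $k = \omega(\alpha)\ell^r$, so the latter multiset sweeps out the entire $\langle\nu_\alpha\rangle$-orbit of $\alpha$ with each element appearing $\ell^r$ times; the condition then becomes $\alpha\chi \in \langle\nu_\alpha\rangle\cdot\alpha$, i.e. $\chi \in \langle\nu_\alpha\rangle\cdot\mathrm{Stab}(\alpha)$, viewed as a subgroup of the unramified character group of $\G_m$ identified, via $\mathrm{Nrd}$, with that of $\G_{mk^{-1}}$.

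Second, I compute the order of this subgroup. By the definition of $\omega(\alpha)$, the intersection $\langle\nu_\alpha\rangle \cap \mathrm{Stab}(\alpha)$ is generated by $\nu_\alpha^{\omega(\alpha)}$, so the quotient $\langle\nu_\alpha\rangle\,\mathrm{Stab}(\alpha) / \mathrm{Stab}(\alpha)$ has order $\omega(\alpha)$, yielding $n(\cuspi) = \omega(\alpha)\,n(\alpha)$. Writing $f$ for the order of $q$ in $(\ZZ/\ell\ZZ)^\times$, and using $\omega(\alpha) = f/\gcd(f, n(\alpha)s(\alpha))$ together with $\ee(\alpha) = f/\gcd(f, n(\alpha))$, a direct gcd calculation gives
\[
\ee(\cuspi) \;=\; \frac{f}{\gcd(f,\omega(\alpha)\,n(\alpha))} \;=\; \frac{\gcd(f, n(\alpha)s(\alpha))}{\gcd(f, n(\alpha))} \;=\; \gcd(\ee(\alpha), s(\alpha)),
\]
where the last equality follows from writing $d = \gcd(f, n(\alpha))$ and using $\gcd(n(\alpha)/d,\,f/d) = 1$.

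The main obstacle is the orbit translation in the first step: converting the multiset equality of supercuspidal supports into the clean group-theoretic condition $\chi \in \langle\nu_\alpha\rangle \cdot \mathrm{Stab}(\alpha)$ relies crucially on the cuspidality hypothesis $k = \omega(\alpha)\ell^r$, which is exactly what forces the multiset to fill out complete $\nu_\alpha$-orbits. The degenerate case $k=1$ (where $\cuspi = \alpha$ is itself supercuspidal, and the orbit argument collapses) is handled separately: by proposition \ref{propa} parts 2 and 3 applied to $\cuspit$, one obtains $\ee(\alpha)\mid a\mid a\,s(\cuspit)=s(\alpha)$ whenever $a>1$, yielding $\gcd(\ee(\alpha),s(\alpha)) = \ee(\alpha) = \ee(\cuspi)$; the residual case $a=1$ is treated directly via the type-theoretic description of $s(\alpha)$ in \cite{MSt}.
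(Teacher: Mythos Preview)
Your argument for $k>1$ is correct but takes a longer route than the paper's. Both rest on the same structural observation: the cuspidality constraint $k=\omega(\alpha)\ell^r$ forces the supercuspidal support of $\cuspi$ to fill out complete $\nu_\alpha$-orbits, so that $\cuspi\chi=\cuspi$ becomes the condition $\alpha\chi\in\langle\nu_\alpha\rangle\cdot\alpha$. You then let $\chi$ range over all unramified characters, compute the full stabiliser to get $n(\cuspi)=\omega(\alpha)\,n(\alpha)$, and extract $\ee(\cuspi)$ through a chain of gcd identities. The paper instead specialises immediately to $\chi=\nu^i$ and applies proposition~\ref{propa}(\ref{STEP0}) to both $\cuspi$ and $\alpha$: the orbit condition then reads $i\in\ee(\alpha)\ZZ+s(\alpha)\ZZ$, and since the same proposition says $\ee(\cuspi)\ZZ=\{i:\cuspi\nu^i=\cuspi\}$, the conclusion $\ee(\cuspi)=\gcd(\ee(\alpha),s(\alpha))$ drops out with no arithmetic. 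Your detour buys the auxiliary formula $n(\cuspi)=\omega(\alpha)\,n(\alpha)$, but the paper's path is shorter because it never leaves the cyclic subgroup generated by $\nu$.

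Your handling of the degenerate case $k=1$ has a gap. The subcase $a>1$ is fine (proposition~\ref{propa} gives $\ee(\alpha)\mid a\mid s(\alpha)$ as you say), but for $a=1$ you merely point to ``the type-theoretic description of $s(\alpha)$ in \cite{MSt}'' without naming a result or explaining why it yields $\ee(\alpha)\mid s(\alpha)$. Since for $k=1$ the lemma \emph{is} precisely the assertion $\ee(\alpha)\mid s(\alpha)$, this subcase carries actual content and cannot be left as a bare citation. The paper's proof, by contrast, does not split off $k=1$ at all: it runs the uniqueness-of-$\alpha$ argument uniformly.
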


\begin{proof}
Soit $i\in\ZZ$.
D'après le lemme \ref{STEP0}, 
c'est un multiple de $\ee(\cuspi)$ si et seulement si 
$\cuspi\nu^i$ est isomorphe à $\cuspi$.
Compte tenu de l'unicité du sous-quotient 
résiduellement non dégénéré $\Sp(\alpha,k)$, avec $k=k(\rho)>1$
ceci se produit si et seulement si $\cuspi$ est un sous-quotient de~:
\begin{equation*}
\alpha\nu^i\times\alpha\nu_\alpha^{}\nu^i\times\dots\times
\alpha\nu_{\alpha}^{k-1}\nu^i.
\end{equation*}
D'après la propriété d'unicité de $\alpha$, 
l'entier $\ee(\cuspi)$ divise $i$ si et seulement s'il y a un 
$t\in\ZZ$ tel que $\alpha\nu^i$ soit isomorphe à $\alpha\nu_{\alpha}^t$, ce qui, 
d'après le lemme \ref{STEP0}, équivaut à 
$i\in \ee(\alpha)\ZZ+s(\alpha)\ZZ$.
\end{proof}

\begin{lemm}
\label{wadm}
Si $w(\cuspit)>1$, alors son plus grand diviseur premier à $\ell$ est égal à 
$\ee(\alpha)$.
\end{lemm}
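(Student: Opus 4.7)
Je prévois de décomposer $w(\cuspit) = a(\cuspit)\,k(\cuspi)$ et de calculer le plus grand diviseur premier à $\ell$ de chaque facteur. D'une part, la proposition \ref{propa}(2) donne cette quantité pour $a(\cuspit)$ : lorsque $a(\cuspit) > 1$, elle vaut $\ee(\cuspi)$. D'autre part, si $k(\cuspi) > 1$, alors $\cuspi$ n'est pas supercuspidale et la classification \cite[Proposition 6.4]{MSc} assure que $k(\cuspi) = \omega(\alpha)\ell^r$ pour un $r \geq 0$, d'où le plus grand diviseur de $k(\cuspi)$ premier à $\ell$ vaut $\omega(\alpha)$. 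Enfin, le lemme \ref{NASR} combiné à la définition de $\omega(\alpha)$ comme ordre de $q^{n(\alpha)s(\alpha)}$ modulo $\ell$ fournit l'identité clef $\omega(\alpha)\,\ee(\cuspi) = \ee(\alpha)$, qui résulte du fait que $\omega(\alpha) = \ee(\alpha)/\gcd(\ee(\alpha),s(\alpha))$ et que $\gcd(\ee(\alpha),s(\alpha)) = \ee(\cuspi)$.

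Ces trois faits permettent de conclure selon les valeurs de $a(\cuspit)$ et $k(\cuspi)$. Si $k(\cuspi) = 1$, alors $\alpha = \cuspi$ et l'hypothèse $w(\cuspit) > 1$ impose $a(\cuspit) > 1$, d'où le plus grand diviseur premier à $\ell$ de $w(\cuspit)$ vaut $\ee(\cuspi) = \ee(\alpha)$. Si $k(\cuspi) > 1$ et $a(\cuspit) > 1$, les deux calculs se combinent pour donner $\ee(\cuspi)\,\omega(\alpha) = \ee(\alpha)$. Reste le cas où $k(\cuspi) > 1$ et $a(\cuspit) = 1$ : il faut alors démontrer que $\ee(\cuspi) = 1$, ce qui assure $\omega(\alpha) = \ee(\alpha)$ et termine.

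Le principal obstacle est ce dernier cas. Je m'attends à le traiter en combinant la relation $s(\cuspi) = a(\cuspit)\,s(\cuspit)$ de la proposition \ref{propa}(3), la formule $s(\Sp(\alpha,k)) = k\,s(\alpha)$ issue de la classification de Zelevinski modulo $\ell$, et la contrainte d'intégralité $s(\cuspit) \mid d$ valable pour toute $\qlb$-représentation irréductible cuspidale de $\GL_m(\D)$ : sous l'hypothèse $a(\cuspit) = 1$, ces éléments donnent $k(\cuspi)\,s(\alpha) \mid d$, ce qui contraint fortement les paramètres et devrait forcer $\ee(\cuspi) = 1$, en éliminant la possibilité que $\gcd(\ee(\alpha),s(\alpha))$ soit strictement supérieur à $1$.
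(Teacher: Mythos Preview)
Your decomposition and case analysis match the paper's proof exactly for the cases $k(\cuspi)=1$ and $a(\cuspit)>1$, $k(\cuspi)>1$; in particular your explicit derivation of the identity $\omega(\alpha)\,\ee(\cuspi)=\ee(\alpha)$ from $\omega(\alpha)=\ee(\alpha)/\gcd(\ee(\alpha),s(\alpha))$ and Lemma~\ref{NASR} is precisely what the paper means when it writes ``le r\'esultat d\'ecoule du lemme~\ref{NASR} et de~\eqref{DEFOMEGA}''.

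The genuine gap is in the remaining case $a(\cuspit)=1$, $k(\cuspi)>1$. You correctly identify that everything reduces to showing $\ee(\cuspi)=1$, but your proposed route does not reach this conclusion. Even granting the relations you invoke (and note that neither $s(\Sp(\alpha,k))=k\,s(\alpha)$ nor $s(\cuspit)\mid d$ is stated in the paper, so both would require justification), the divisibility $k(\cuspi)\,s(\alpha)\mid d$ that you obtain carries no information about $\gcd(\ee(\alpha),s(\alpha))$: the quantity $\ee(\alpha)$ depends on $q$, $n(\alpha)$, and $\ell$, and is in no way constrained by a divisibility involving only $s(\alpha)$, $k$, and $d$. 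Your phrase ``devrait forcer'' is the tell that no mechanism has been supplied.

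The paper does not attempt to prove $\ee(\cuspi)=1$ internally in this case; it invokes \cite[Th\'eor\`eme~4.3]{Comptage}, a nontrivial result from the companion paper establishing exactly this implication (that $a(\cuspit)=1$ forces $\ee(\cuspi)=1$). You should either cite that result or, if you want a self-contained argument, reproduce its proof --- but the divisibility heuristic you sketch is not a substitute for it.
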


\begin{proof}
Supposons d'abord que $a(\cuspit)>1$, de sorte que son plus grand diviseur 
premier à $\ell$ est $\ee(\cuspi)$. 
Si $k(\cuspi)=1$, alors $\cuspi$ est égale à $\alpha$ et le résultat est immédiat. 
Sinon, le résultat découle du lemme \ref{NASR} et du fait que le plus grand 
diviseur de $k(\cuspi)$ premier à $\ell$ est égal à l'entier $\omega(\a)$
défini par \eqref{DEFOMEGA}.

Supposons que $k(\cuspi)>1$ et que $a(\cuspit)=1$.
Comme $\rho$ n'est pas supercuspidale, le lemme \ref{CN2} implique que 
$\ee(\cuspi)$ est égal à $1$, 
et le résultat découle à nouveau du lemme \ref{NASR} et du fait que le plus grand 
diviseur de $k(\cuspi)$ premier à $\ell$ est égal à $\omega(\a)$.
\end{proof}

Isolons le corollaire suivant, qui nous sera utile plus tard. 

\begin{coro}
\label{CoroIsole}
Supposons que $k(\cuspi)>1$ et que $a(\cuspit)=1$.
Alors $\ee(\cuspi)=1$.
\end{coro}

\section{Comptage}
\label{Section4}

On fixe un entier $m\>1$ et on pose $\G=\GL_m(\D)$.
Dans cette section, nous prouvons le critère de $\ell$-supercuspidalité
(\ie la proposition \ref{CongCusp}) 
et son complément, la proposition \ref{CalculW}.

\subsection{Preuve de la proposition \ref{CongCusp}}

Soit $\rt$ une représentation irréductible cuspidale $\ell$-adique entière de $\G$, 
et soit $\Oo(\rt)$ l'ensemble des classes inertielles $[\G,\rt']$ 
de représentations irréductibles cuspi\-dales $\ell$-adiques de $\G$ 
congrues à $[\G,\rt]$ modulo $\ell$, \ie
(voir le paragraphe \ref{P14} pour la notation) 
telles que~:
\begin{equation*}
\r_\ell([\G,\rt'])=\r_\ell([\G,\rt]).
\end{equation*}
Fixons un type simple maxi\-mal $(\J,\lt)$ dans la classe de $\G$-conjugaison 
correspondant à $[\G,\rt]$, 
et notons $\l$ la réduction de $\lt$ modulo $\ell$.
Alors $(\J,\l)$ est un $\flb$-type simple maximal correspondant à la classe inertielle 
de la représentation $\rho$ apparaissant dans \eqref{redcusp}, et l'entier 
$a=a(\rt)$ est l'indice du $\G$-normalisateur de $(\J,\lt)$ dans celui de 
$(\J,\l)$
(voir \cite{MSt} Section 3).

L'ensemble $\Oo(\rt)$ s'identifie donc à l'ensemble des classes de
$\G$-conjugaison de $\qlb$-types simples maximaux $(\J',\lt')$ 
tels que, si l'on note $\l'$ la réduction de $\lt'$ modulo $\ell$, on ait~:
\begin{enumerate}
\item 
les $\flb$-types simples maximaux $(\J',\l')$ et $(\J,\l)$ sont conjugués sous
$\G$~; 
\item
on a $(\N_\G(\J',\l'):\N_\G(\J',\lt'))=(\N_\G(\J,\l):\N_\G(\J,\lt))$~;
\end{enumerate}
où $\N_\G(\J,\l)$ désigne le normalisateur de $(\J,\l)$ dans $\G$.
Quitte à conjuguer,
on peut donc supposer que $\J'=\J$ et $\l'=\l$~; 
l'ensemble $\Oo(\rt)$ s'identifie donc à l'ensemble $\Tt(\J,\lt)$ des classes de 
$\N_\G(\J,\l)$-conjugaison de $\qlb$-types simples maximaux $(\J,\lt')$ de $\G$ 
tels que~:
\begin{enumerate}
\item
les représentations $\lt'$ et $\lt$ sont congruentes modulo $\ell$~;
\item
les paires $(\J,\lt')$ et $(\J,\lt)$ ont le même normalisateur dans $\G$.
\end{enumerate}
Fixons une décomposition de $\lt$ sous la forme $\kt\otimes\st$
et un isomorphisme de groupes de $\J/\J^1$ sur $\GB$
(voir le paragraphe \ref{PARA21}).
Le foncteur~:
\begin{equation*}
\widetilde{\tau}\mapsto\kt\otimes\widetilde{\tau}
\end{equation*}
définit une bijection entre les représentations irréducti\-bles 
cus\-pidales de $\GB$ et les 
ty\-pes simples maxi\-maux de $\G$ définis sur $\J$ et contenant $\kt$.
D'après \cite{SeSt2} Theorem 7.2,
sa réciproque induit une bijection de $\Tt(\J,\lt)$ sur l'en\-semble $\Cc(\st)$ des 
orbites, sous l'action du groupe de Galois $\Gal(\kd/\ke)$, 
de représentations irréducti\-bles cus\-pidales $\st'$
de $\GB$ telles que~: 
\begin{enumerate}
\item
les représentations $\st'$ et $\st$ sont congruentes modulo $\ell$~;
\item 
les orbites de $\st'$ et de $\st$ sous $\Gal(\kd/\ke)$ ont le même cardinal.
\end{enumerate}
Si $\xt\in\X$ correspond à $\st$, alors \eqref{green} 
induit une bijection entre $\Cc(\st)$ et l'ensemble $\EuScript{K}(\xt)$ 
des orbites des éléments $\xt'\in\X$, sous le groupe de 
Galois de $\kk$ sur $\ke$, tels que~:
\begin{enumerate}
\item
$x'$ et $x$ ont la même partie $\ell$-régulière~;
\item
les $\Gal(\kk/\ke)$-orbites de $x'$ et de $x$ ont le même cardinal,
\ie que $\deg(\xt')=\deg(x)$.
\end{enumerate}
Remarquons que cette dernière condition signifie que $\xt$, $\xt'$ ont le 
même stabilisateur sous l'action de $\Gal(\kk/\ke)$.
En prenant l'intersection avec $\Gal(\kk/\kd)$, on voit que tout 
$x'\in\mult\kk$ vérifiant cette condition appartient à $\X$.
On obtient ainsi une bijection entre $\Oo(\rt)$ et $\EuScript{K}(\xt)$~; 
on a donc prouvé le résul\-tat suivant. 

\begin{prop}
L'ensemble $\Oo(\rt)$ est fini, et son cardinal $t(\rt)$ est 
le nombre de $\Gal(\kk/\ke)$-orbites des $\xt'\in\mult\kk$ tels que 
$x,x'$ ont la même partie $\ell$-régulière et le même degré 
sur $\ke$.
\end{prop}

Écrivons $\xt$ sous la forme $yz$ où $y$ est d'ordre premier à $\ell$ et
$z$ d'ordre une puissance de $\ell$
(donc $y$ est la partie $\ell$-régulière de $x$).
L'application~:
\begin{equation}
\label{phichi}
z'\mapsto yz'
\end{equation}
est une bijection entre le $\ell$-sous-groupe de Sylow $\P_{\ell}$ de $\mult\kk$ 
et l'ensemble des éléments 
$x'\in\mult\kk$ dont la par\-tie $\ell$-régulière est $y$.
Étant donnés $z'\in\P_{\ell}$ et $k\in\ZZ$, remarquons que~:
\begin{equation}
\label{ponsonduterrail}
(yz')^{q_{\E}^{k}}=yz'
\quad\Leftrightarrow\quad 
\text{$y^{q_{\E}^{k}}=y$ et $(z')^{q_{\E}^{k}}=z'$}.
\end{equation}
Notons $\kk_{1}$ l'extension de $\ke$ engendrée par $y$.
Pour $z'\in\P_{\ell}$, notons $[\![z']\!]$ son orbite sous le 
groupe de Galois de $\kk$ sur $\kk_{1}$ et~:
\begin{equation*}
\deg_{1}(z') = {\rm card}([\![z']\!])
\end{equation*}
son degré sur $\kk_{1}$.
D'après \eqref{ponsonduterrail}, les éléments $yz$, $yz'$ ont le même 
degré sur $\ke$ si et seulement si~:
\begin{equation}
\label{Bateman}
\deg_{1}(z) = \deg_{1}(z').
\end{equation}
Notons $\Pp(z)$ l'ensemble des $[\![z']\!]$ 
pour $z'\in\P_\ell$ vérifiant \eqref{Bateman}.
On a prouvé le résultat suivant. 

\begin{lemm}
L'application \eqref{phichi} induit une bijection de $\Pp(z)$ sur 
$\EuScript{K}(\xt)$.  
\end{lemm}

Il ne nous reste plus qu'à calculer $\deg_{1}(z)$ en 
fonction des invariants associés à $\rt$.
Comme on a $a(\rt)=a(\st)$ et $\d(\rho)=\d(\s)$, et compte tenu de 
\eqref{DEGX} et \eqref{DEGY}, on en déduit que~:
\begin{equation}
\label{DEGZ}
\deg_{1}(z) = 
\frac{\deg(x)}{\deg(y)} = 
\d(\rho) a(\rt) = w(\rt)=w(\st).
\end{equation}
On obtient donc le résultat suivant. 

\begin{lemm}
\label{L3}
L'entier $w(\rt)t(\rt)$ est le nombre de $z'\in\P_\ell$ de degré $w(\rt)$ 
sur $\kk_{1}$.
\end{lemm}

Compte tenu de la relation $n(\rt)s(\rt)=f(\rt)$ 
(voir (\ref{PARA21}.3)),
l'extension de $\kk_{1}$ de degré $w(\rt)$ 
est de cardinal~: 
\begin{equation*}
q^{n(\rt)}.
\end{equation*}
On en déduit l'inégalité $t(\rt)\<c(\rt)$, et cette inégalité est une 
égalité si et seulement si $w(\rt)=1$, \ie si et seulement si $\rt$ est 
$\ell$-supercuspidale. 
Ceci met fin à la preuve de la proposition \ref{CongCusp}.

\subsection{Preuve de la proposition \ref{CalculW}}
\label{pa42}

Poussons maintenant plus loin les calculs dans le cas où 
$\r_\ell(\rt)$ n'est pas irréductible et super\-cuspidale, \ie que 
$w=w(\rt)>1$. 
Notons $\Q$ le cardinal de $\kk_{1}$ et, pour tout $n\>1$,
notons $f(n)=f_\Q(n)$ le nombre de $z'\in\P_\ell$ de degré $n$ 
sur $\kk_{1}$.
D'après le lemme \ref{L3}, on a donc~:
\begin{equation*}
\label{memento}
t(\rt)=\frac{f(w)}{w}.
\end{equation*}
Notons $v$ la valuation $\ell$-adique sur $\ZZ$, 
et notons $\kk'$ l'extension de $\kk_{1}$ de degré 
$w$ contenue dans $\kk$~; 
en partitionnant $\kk^{\prime\times}$ 
selon le degré de ses élément sur $\kk_{1}$, on obtient l'égalité~:
\begin{equation*}
\ell^{v(\Q^{w}-1)} = \sum\limits_{n |w} f(n).
\end{equation*}
Par inversion de Möbius, on a~:
\begin{equation*}
f(w) = \sum\limits_{n | w} 
\mu\left(\frac{w}{n}\right)
\ell^{v(\Q^{n}-1)}
\end{equation*}
où $\mu$ désigne la fonction de Möbius. 
Notons $w_0$ le plus grand diviseur de $w$ premier à $\ell$.

\begin{lemm}
L'ordre de $\Q$ dans $(\ZZ/\ell\ZZ)^\times$ est égal à $w_0$.
\end{lemm}

\begin{proof}
L'ordre de $z$ est de la forme $\ell^r$, $r\>0$.
Comme $w>1$, on déduit que $r\>1$.
La condition $\deg_{1}(z)=w$ signifie que l'ordre de $\Q$ 
dans $(\ZZ/\ell^{r}\ZZ)^\times$ est égal à $w$.
En projetant sur $(\ZZ/\ell\ZZ)^\times$, on en déduit que l'ordre de $\Q$ 
dans $(\ZZ/\ell\ZZ)^\times$ est égal au plus grand diviseur de $w$ 
premier à $\ell$, \ie $w_0$.
\end{proof}

On a donc~:
\begin{equation*}
f(w) = \sum\limits_{t\<v(w)} \sum\limits_{n | w_0} \ 
\mu(\ell^{v(w)-t})\mu\left(\frac{w_0}{n}\right)
\ell^{v(\Q^{n\ell^t}-1)}.
\end{equation*}
Si $v(w)=0$, on a $w=w_0>1$ et cela donne simplement~:
\begin{equation*}
f(w_0) = \sum\limits_{n | w_0} 
\mu\left(\frac{w_0}{n}\right)
\ell^{v(\Q^{n}-1)}.
\end{equation*}
On trouve que~:
\begin{equation*}
f(w_0) = \ell^{v(\Q^{w_0}-1)} + \sum\limits_{n | w_0 \atop{n\neq w_0}}
\mu\left(\frac{w_0}{n}\right)
=\ell^{v(\Q^{w_0}-1)}-1.
\end{equation*}
Supposons maintenant que $v(w)\>1$. 
Cela donne~:
\begin{equation*}
f(w) = \sum\limits_{n | w_0} \mu\left(\frac{w_0}{n}\right) \ell^{v(\Q^{n\ell^{v(w)}}-1)}
- \sum\limits_{n | w_0} 
\mu\left(\frac{w_0}{n}\right)\ell^{v(\Q^{n\ell^{v(w)-1}}-1)} \\ 
= f_{\Q^{\ell^{v(w)}}}(w_0)-f_{\Q^{\ell^{v(w)-1}}}(w_0).
\end{equation*}
Comme $\Q$ a le même ordre que $\Q^{\ell^k}$, $k\>0$,
dans $(\ZZ/\ell\ZZ)^\times$, à savoir $w_0$, 
on trouve que~:
\begin{equation*}
f(w) = \ell^{v(\Q^{u\ell^v}-1)}-\ell^{v(\Q^{u\ell^{v-1}}-1)}
= \ell^{v(\Q^{w}-1)-1}(\ell-1).
\end{equation*}
On trouve ainsi le résultat annoncé, en remarquant que $c(\rt)$ est égal à 
$\ell^{v(\Q^{w}-1)}$.

\subsection{}

Dans ce paragraphe, nous allons reformuler la proposition 
\ref{CongCusp} sous une 
forme analogue à celles de Vignéras \cite{Vigl} Proposition 2.3 
et Dat \cite{Datj} Proposition 2.3.2.

Fixons une uniformisante $\w$ de $\F$ et,
pour toute représentation irréductible cuspidale $\ell$-adique entiè\-re $\rt$ 
de $\G$, 
notons $\Oo(\rt,\w)$ l'en\-semble des classes de représentations 
irréductibles cuspidales $\ell$-adiques entiè\-res de $\G$ qui sont 
congrues à $\rt$ et dont le caractère central prend la même valeur que celui 
de $\rt$ en $\w$.
Soit $\mathfrak{c}(\rt)$ la plus grande puissance de $\ell$ divisant~:
\begin{equation*}
\frac{md}{n(\rt)} \cdot (q^{n(\rt)}-1).
\end{equation*}
On a le résultat suivant. 

\begin{prop}
\label{CongCusp2}
Soit $\rt$ une $\qlb$-représentation irréductible cuspidale et entière de $\G$. 
Alors l'en\-semble $\Oo(\rt,\w)$ est fini, de cardinal noté $\mathfrak{t}(\rt)$,
et on a~:
\begin{equation*}
\mathfrak{t}(\rt)\<\mathfrak{c}(\rt)
\end{equation*}
avec égalité si et seulement si $\rt$ est $\ell$-supercuspidale.  
\end{prop}

\begin{proof}
D'après la proposition \ref{CongCusp},
il suffit de prouver que $\mathfrak{t}(\rt)$ est le produit de 
$t(\rt)$ par la plus grande puissance de $\ell$ divisant
$md\cdot n(\rt)^{-1}$.
Remplaçons la cor\-respon\-dance bijective \eqref{sichuan} 
par la bijection~:
\begin{equation*}
\rho \leftrightarrow (\boldsymbol{\J},\Lambda)
\end{equation*}
entre classes d'isomorphisme de représentations irréductibles cuspidales 
de $\G$ et clas\-ses de conju\-gai\-son (sous $\G$) de types simples 
maximaux étendus de $\G$ (voir \cite{MSt} Théorème 3.11).

Soit $(\J,\lt)$ un type simple maximal contenu dans la 
$\qlb$-représentation irréductible cuspidale $\rt$, 
et soit $\widetilde{\boldsymbol{\J}}$ son normalisateur dans $\G$. 
D'après \cite{MSt} Proposition 3.1, il y a une unique représentation 
$\tL$ de $\widetilde{\boldsymbol{\J}}$ prolongeant $\lt$ dont l'induite à $\G$ est 
isomorphe à $\rt$.
Notons $\overline\Lambda$ la réduction modulo $\ell$ de $\tL$, 
qui est un prolongement de $\l$ à $\widetilde{\boldsymbol{\J}}$.

Soit $\rt'$ une représentation irréductible cuspidale $\ell$-adique entière de 
$\G$.  
Pour qu'elle soit con\-grue à $\rt$, il faut et il suffit qu'elle contienne un 
type simple maximal étendu $(\widetilde{\boldsymbol{\J}}{}',\tL')$ tel que 
$\widetilde{\boldsymbol{\J}}{}'$ soit égal à $\widetilde{\boldsymbol{\J}}$ 
et dont la réduction modulo 
$\ell$, notée $\overline\Lambda{}'$, soit égale à $\overline\Lambda$.
L'entier $\mathfrak{t}(\rt)$ est donc le nombre de classes de $\G$-conjugaison 
de $(\widetilde{\boldsymbol{\J}}{}',\tL')$ tels que
$\widetilde{\boldsymbol{\J}}{}'=\widetilde{\boldsymbol{\J}}$, 
$\overline\Lambda{}'=\overline\Lambda$ et 
$\tL'(\w)=\tL(\w)$.
Fixons une uniformisante $\w'$ de $\D'$ et posons~:
\begin{equation*}
\widetilde\w=(\w')^{d's(\rt)^{-1}}.
\end{equation*}
Le groupe 
$\widetilde{\boldsymbol{\J}}$ est engendré par $\J$ et $\widetilde\w$.
L'entier $\mathfrak{t}(\rt)$ est égal au produit $t(\rt)t_1(\rt)$ 
où $t_1(\rt)$ est le nombre de re\-pré\-sen\-tations $\tL'$ de 
$\widetilde{\boldsymbol{\J}}$ 
prolongeant $\lt$ telles que
$\overline\Lambda{}'(\widetilde\w)=\overline\Lambda(\widetilde\w)$ et 
$\tL'(\w)=\tL(\w)$.
Le nombre de re\-pré\-sen\-tations irréductibles de $\widetilde{\boldsymbol{\J}}$ prolongeant 
$\l$ et prenant une 
valeur fixée en $\w$ est égal à l'indice de $\mult\F\boldsymbol{\J}$ dans $\widetilde{\boldsymbol{\J}}$, 
\ie à~:
\begin{equation}
\label{T1}
e(\E:\F)s(\rt) = md\cdot n(\rt)^{-1},
\end{equation}
où $e(\E:\F)$ désigne l'indice de ramification de $\E$ sur $\F$.
Compte tenu de la condition supplémen\-tai\-re sur 
$\overline\Lambda{}'(\widetilde\w)$, on trouve 
que $t_1(\rt)$ est la plus grande puissance de $\ell$ divisant \eqref{T1}.
\end{proof}

\section{Preuve de la proposition \ref{lift}}

Soit $\rho$ une représentation irréductible cuspidale $\ell$-modulaire de 
$\G$ et soit $(\J,\k\otimes\s)$ un $\flb$-type simple maxi\-mal dans la classe de 
$\G$-conjugaison correspondant à $[\G,\rho]$.
D'après \cite{MSt}, pour que $\rho$ se relève à $\qlb$, il faut et 
suffit que $\s$, considérée comme une représentation irréductible cuspidale 
de $\GB$, se relève en une représentation 
irréductible cuspidale $\ell$-adique $\st$ telle que $s(\st)=s(\s)$.

Soit $y\in\Y$ correspondant à $\s$ par \eqref{james}.  
Pour qu'une telle représentation $\st$ existe, 
il faut et il suffit donc, d'après \eqref{DEGZ},
qu'il existe un $x\in\X$ dont la partie 
$\ell$-régulière soit $y$ et qui vérifie~:
\begin{equation*}
\deg(x) = \d(\s)\cdot\deg(y).
\end{equation*}

Si $\rho$ (donc $\s$) est supercuspidale, \ie si l'on a $\d(\s)=1$,
alors $x=y\in\X$ vérifie les conditions requises, et on retrouve bien le 
fait que toute représentation irréductible supercuspidale $\ell$-modulaire 
se relève. 

Supposons maintenant que $\rho$ est cuspidale mais pas 
supercuspidale, \ie que $\d(\s)>1$.
D'après le corollaire \ref{COROEPS1}, 
la proposition \ref{lift} peut être reformulée de la façon suivante. 

\begin{prop}
\label{lift2}
Soit $\rho$ une représentation irréductible cuspidale non 
supercuspidale $\ell$-mo\-du\-lai\-re de $\G$.
Pour que $\rho$ se relève à $\qlb$, il faut et il suffit que 
$s(\rho)$ et $\d(\rho)$ soient premiers entre eux
et que $\e(\rho)=1$.
\end{prop}

D'après la remarque \ref{FLO}, l'entier
$\e(\rho)$ est égal à l'entier $\e(\s)$ défini par \eqref{DEFERHO}. 

\begin{lemm}
\label{CN3}
Pour toute représentation irréductible cuspidale $\ell$-adique $\st$ relevant 
$\s$, le plus grand diviseur de $a(\st)$ premier à $\ell$ est $\e(\s)$.
\end{lemm}

\begin{proof}
Fixons un $x\in\X$ correspondant à $\st$ et de partie régulière $y$.
Comme $\rho$ (donc $\s$) n'est pas supercuspidale,
on a $x\neq y$.
Le résultat suit alors du lemme \ref{CN2}.
\end{proof}

Pour harmoniser les notations, posons $f(\s)=f(\rho)$.

\begin{lemm}
\label{LEM1}
Soit $z\in\P_\ell$ d'ordre $\ell^r$, $r\>0$.
On a $yz\in\X$ si et seulement si l'ordre de~:
\begin{equation}
\label{ORDREMYPROGRESSO}
q^{f(\s)\d(\s)^{-1}}
\end{equation}
dans $(\ZZ/\ell^r\ZZ)^\times$ est égal à $\d(\s)$.
\end{lemm}

Comme $y\in\Y$, il y a un $z\in\P_\ell$ (non trivial puisque $\s$ n'est pas 
supercuspidale) tel que $yz\in\X$.
Il y a donc un $r\>1$ tel que l'ordre de \eqref{ORDREMYPROGRESSO}
dans $(\ZZ/\ell^r\ZZ)^\times$ est égal à $\d(\s)$. 
En réduisant modulo $\ell$, on en déduit que son ordre dans 
$(\ZZ/\ell\ZZ)^\times$ est le plus grand diviseur de $\d(\s)$ 
premier à $\ell$.

\begin{lemm}
\label{LEM2}
Soit $z\in\P_\ell$ d'ordre $\ell^r$, $r\>0$.
On a $\deg(yz)=\d(\s)\cdot\deg(y)$ si et seulement si 
l'ordre de~:
\begin{equation}
\label{ORDREMYPROGRESSO2}
q^{f(\s)(\d(\s)s(\s))^{-1}}
\end{equation} 
dans $(\ZZ/\ell^r\ZZ)^\times$ est égal à $\d(\s)$.
\end{lemm}

Supposons d'abord que $\rho$ se relève à $\qlb$.
D'après le lemme \ref{CN} et le corollaire \ref{CoroIsole}, 
on trouve que $\d(\rho)$ est premier à $s(\rho)$ et que $\e(\rho)=1$.

Inversement, supposons que les conditions de la proposition \ref{lift} sont vérifiées. 
Soit $z\in\P_\ell$ d'ordre $\ell^r$, $r\>1$, tel que $yz\in\X$, et notons $n$ 
l'ordre de \eqref{ORDREMYPROGRESSO2} dans $(\ZZ/\ell^r\ZZ)^\times$.
D'après le lemme \ref{LEM1}, on a~:
\begin{equation}
\label{CRUX}
\frac{n}{(n,s(\s))} = \d(\s).
\end{equation}
L'hypothèse $\e(\rho)=1$ implique que~:
\begin{equation}
\label{CRUX2}
\frac{n}{(n,\d(\s))} = \ell^{t},
\quad t\>0.
\end{equation}
Si $\ell$ divise $\d(\s)$, alors $s(\s)$ est premier à $\ell$, et \eqref{CRUX} 
et \eqref{CRUX2} impliquent que $n=\d(\s)$.

En revan\-che, si $\d(\s)$ est premier à $\ell$, écrivons $n=\d(\s)n'$ avec 
$n'=(n,s(\s))=\ell^t$.
On peut remarquer que $t=v(n)$.
Alors l'élément~:
\begin{equation*}
z^{\ell^{v(n)}}\in\P_\ell
\end{equation*}
qui est d'ordre $\ell^{r-v(n)}$, 
vérifie la condition du lemme \ref{LEM2} car l'ordre de 
\eqref{ORDREMYPROGRESSO2} dans $(\ZZ/\ell^{r-v(n)}\ZZ)^\times$ est égal à 
$n\ell^{-v(n)}=\d(\s)$. 
Comme $\d(\rho)$ est premier à $s(\rho)$, il vérifie aussi la condition du 
lemme \ref{LEM1}. 
Ceci met fin à la preuve de la proposition \ref{lift}.

\begin{rema}
\label{Soitditenpassant}
Posons $k=k(\s)$ et $s=s(\s)$,
et notons $\tau$ l'unique représentation irré\-ducti\-ble
super\-cuspidale de $\GL_{m'k^{-1}}(\kd)$ telle que $\s$ soit 
isomorphe à la représentation non dégénérée no\-tée 
$\sp(\tau,k)$ au paragraphe \ref{P32}. 
Le plus grand diviseur de $k$ premier à $\ell$ est
$\e(\tau)(s,\e(\tau))^{-1}$ et $\e(\s)$ est égal à $(s,\e(\tau))$.
La condition de la proposition \ref{lift} s'écrit donc $\e(\rho)=1$ et 
min$(v(k),v(s))=0$.
\end{rema}

\section{Preuve de la proposition \ref{lifta}}
\label{SECTION5}

Soit $\rho$ une représentation irréductible cuspidale $\ell$-modulaire de 
$\G$, et soit $a>1$.
On cherche à quelle condition $\rho$ admet un $a$-relèvement, \ie une 
représentation irréductible cuspidale $\ell$-adique entière $\rt$ de $\G$ dont la 
réduc\-tion modulo $\ell$ contienne $\rho$ et soit de longueur $a$. 
Il faut et suffit pour cela que $\s$ se relève en une représentation 
irréductible cuspidale $\ell$-adique $\st$ telle que $s(\s)=a\cdot s(\st)$.
D'après les lemmes \ref{CN} et \ref{CN2}, on a des conditions nécessaires~: 
\begin{enumerate}
\item
$a=\e(\s)\ell^u$ avec $u\>0$.
\item
$a$ divise $s(\s)$ et $s(\s)a^{-1}$ est premier à $\d(\s)$.
\end{enumerate}
Supposons donc qu'elles sont vérifiées. 

\begin{rema}
En particulier, si $\s$ n'est pas supercuspidale,
les lemmes \ref{CN} et \ref{CN2} montrent que 
$\e(\s)$ divise $s(\s)$.
\end{rema}

Soit $y\in\Y$ correspondant à $\s$ par \eqref{james}.  
Pour qu'une telle représentation $\st$ existe, 
il faut et il suffit donc, d'après \eqref{DEGZ},
qu'il existe un $x\in\X$ dont la partie 
$\ell$-régulière soit $y$ et qui vérifie~:
\begin{equation*}
\deg(x) = a\d(\s)\cdot\deg(y).
\end{equation*}
Comme $y\in\Y$, il y a un $z\in\P_\ell$ (non trivial car $a>1$) 
tel que $yz\in\X$.
Il y a donc un entier $r\>1$ tel que l'ordre de \eqref{ORDREMYPROGRESSO}
dans $(\ZZ/\ell^r\ZZ)^\times$ est égal à $\d(\s)$. 
En réduisant modulo $\ell$, on en déduit que son ordre dans 
$(\ZZ/\ell\ZZ)^\times$ est le plus grand diviseur de $\d(\s)$ 
premier à $\ell$.

\begin{lemm}
\label{LEM2a}
Soit $z\in\P_\ell$ d'ordre $\ell^r$, $r\>0$.
On a $\deg(yz)=a\d(\s)\cdot\deg(y)$ si et seulement si 
l'ordre de~:
\begin{equation}
\label{ORDREMYPROGRESSO2a}
q^{f(\s)(\d(\s)s(\s))^{-1}}
\end{equation} 
dans $(\ZZ/\ell^r\ZZ)^\times$ est égal à $a\d(\s)$.
\end{lemm}

Soit $z\in\P_\ell$ d'ordre $\ell^r$, $r\>1$, tel que $yz\in\X$, et soit $n$ 
l'ordre de \eqref{ORDREMYPROGRESSO2a} dans $(\ZZ/\ell^r\ZZ)^\times$.
D'après le lemme \ref{LEM1}, on a~:
\begin{equation*}
\label{CRUXa}
\frac{n}{(n,s(\s))} = \d(\s).
\end{equation*}
En particulier, on a~:
\begin{equation*}
\label{CRUXabis}
v(n)=v(k)+{\rm min}(v(n),v(s(\s))).
\end{equation*}
Si l'on note $n_0$ le plus grand diviseur de $n$ premier à $\ell$, on a~:
\begin{equation*}
\label{CRUX2a} 
\frac{n_0}{(n_0,\d(\s))} = \e(\s).
\end{equation*}
On déduit de l'hypothèse sur $a$ que $(n_0,s(\s))=\e(\s)$, 
puis que $n_0=\d_0(\s)\e(\s)$, 
où $\d_0(\s)$ désigne le plus grand diviseur de 
$\d(\s)$ premier à $\ell$. 

On cherche un $t\in\{1,\dots,v(q^{f(\s)}-1)\}$ 
tel que l'ordre de \eqref{ORDREMYPROGRESSO2a} dans $(\ZZ/\ell^t\ZZ)^\times$ 
soit égal à $a\d(\s)$.
D'après l'hypo\-thèse sur $a$, cela impliquera automatiquement que 
l'ordre de \eqref{ORDREMYPROGRESSO} est $\d(\s)$. 
Soit~:
\begin{equation*}
t_0 = v(\Q^{n_0}-1)
\end{equation*}
où $\Q$ est défini par \eqref{ORDREMYPROGRESSO2a}
(voir aussi le paragraphe \ref{pa42}).
On a donc~:
\begin{equation*}
v(q^{f(\s)}-1) = t_0 + v(s(\s)) + v(k).
\end{equation*}
Posons $t=t_0+u+v(k)$ (on a bien $t\<v(q^{f(\s)}-1)$ car $u\<v(s(\s))$).
Alors l'ordre de \eqref{ORDREMYPROGRESSO2a} 
dans $(\ZZ/\ell^t\ZZ)^\times$ est égal à $n_0\ell^u=a\d(\s)$. 

\begin{rema}
Compte tenu de la remarque \ref{Soitditenpassant}, la condition de la
proposition \ref{lifta} se 
résume à $u\in\{0,\dots,v(s(\rho))\}$ et min$(v(k(\rho)),v(s(\rho))-u)=0$.
\end{rema}

\section{Preuve de la proposition \ref{comptagedeclasses}}
\label{SEC7}

Dans toute cette section, on fixe des entiers $n,w\>1$ tels que $w$ divise 
$n$.
Nous prouvons ici~la formule de comptage 
\eqref{comptagedeclassesFORMULEintro}, que l'on complètera 
dans la section suivante en le théorème \ref{ComptagePIIntro}. 

\subsection{}
\label{ENDO}

Dans ce paragraphe, nous rappelons brièvement quelques 
attributs des $\F$-endoclasses de carac\-tè\-res simples
dont nous aurons besoin.
Pour plus de détails, nous renvoyons le lecteur à \cite{BSS}. 

Soit $\A$ une $\F$-algèbre centrale simple de dimension finie,
soit $[\La,n_\La,0,\b]$ une strate simple de $\A$ et soit 
$\t\in\Cc(\La,0,\b)$ un caractère simple.
Le couple $([\La,n_\La,0,\b],\t)$ 
défi\-nit un ps-caractère dont l'endo-classe 
-- qui est une classe d'équivalence de ps-ca\-rac\-tères --
est notée $\TT$.
Les entiers~:
\begin{eqnarray*}
f(\TT) &=& f(\F[\b]:\F), \\
\deg(\TT) &=& [\F[\b]:\F],
\end{eqnarray*} 
\ie le degré résiduel et le degré 
de $\F[\b]$ sur $\F$ respectivement, 
ne dépendent pas du choix de $\b$ mais uniquement de $\TT$.
Le nombre rationnel positif~:
\begin{equation*}
l(\TT) = -v_{\F}(\b),
\end{equation*}
où $v_{\F}$ désigne la valuation sur $\F[\b]$ normalisée 
en donnant la valeur $1$ à une uniformisante de $\F$, ne dépend pas non plus 
du choix de $\b$ mais uniquement de $\TT$.

Si $\rho$ est une $\R$-représentation irréductible cuspidale de $\mult\A$ 
(ici $\R$ désigne $\qlb$ ou $\flb$), il existe un couple 
$([\La,n_\La,0,\b],\t)$ comme ci-dessus tel que la 
restriction de $\rho$ au pro-$p$-sous-groupe $\H^1(\b,\La)$
contienne $\t$.
L'endoclasse $\TT$ définie par ce couple ne dépend que de la classe d'isomorphisme 
de la représentation $\rho$,
et le nombre rationnel $l(\TT)\>0$ est le niveau normalisé 
(ou aussi profon\-deur) de $\rho$.

\subsection{}

Soit $\D$ une $\F$-algèbre centrale simple de degré réduit $d$ divisant $n$,
et soit $\TT$ une $\F$-endoclasse de degré $\gg$ divisant $nw^{-1}$.
On définit un entier $m\>1$ par la relation $md=n$ et on pose~:
\begin{equation*}
d' = \frac{d}{(d,\gg)},
\quad
m' = \frac{m(d,\gg)}{\gg}.
\end{equation*}
Pour tout $u\>1$ divisant $m$, 
notons $\Aa(\D,\TT,w,u)$ l'en\-sem\-ble des classes inertielles de 
re\-pré\-sen\-ta\-tions 
irréductibles cuspidales $\ell$-adiques $\rt$ de $\GL_{u}(\D)$ telles que~: 
\begin{enumerate}
\item
$w(\rt)=w$~;
\item
l'endoclasse de $\rt$ est égale à $\TT$.
\end{enumerate}
Remarquons que, pour qu'il y ait une représentation 
irréductible cuspidale $\ell$-adique $\rt$ de $\GL_{u}(\D)$ 
d'endoclasse $\TT$, il faut et il suffit que l'entier $u$ soit 
de la forme~:
\begin{equation} 
\label{quark}
u = \frac{\gg}{(d,\gg)}\cdot u'
\end{equation}
où $u'$ est un diviseur de $m'$.
Posons maintenant~:
\begin{equation}
\label{Pietranera}
\Aa(\D,\TT,w) = 
\bigcup\limits_{u|m} \Aa(\D,\TT,w,u)
\end{equation}
et notons $\Aa_{\ell}(\D,\TT,w)$ l'ensemble des réductions modulo $\ell$ 
des éléments de \eqref{Pietranera}. 
L'endoclasse~$\TT$ étant fixée, cet ensemble est 
fini, et son cardinal sera noté $\boldsymbol{a}_{\ell}(\D,\TT,w)$.

\subsection{}

Dans ce paragraphe, on suppose que $\TT$ est la $\F$-endoclasse nulle $\0$, 
et on va calculer $\boldsymbol{a}_{\ell}(\D,\0,w)$.
Étant donné un entier $u\>1$ divisant $m$,
toute représentation irréductible cuspidale $\ell$-mo\-dulaire $\s$ de $\GL_u(\kd)$ 
définit un type simple maximal de niveau $0$ de $\GL_u(\D)$ |
donc une classe inertielle $\Omega(\s)$ de représentations 
cuspidales de niveau $0$ de $\GL_u(\D)$.
L'application~:
\begin{equation}
\label{Gina}
\s \mapsto \Omega(\s)
\end{equation}
est surjective (toutes les classes inertielles de représentations 
cuspidales de niveau $0$ de $\GL_u(\kd)$ sont atteintes) et ses 
fibres sont les classes de conjugaison sous le groupe de Galois de 
$\kd$ sur $\ke$.

\begin{lemm}
\label{riddle}
Soit $\s$ une représentation irréductible cuspidale $\ell$-mo\-dulaire de 
$\GL_u(\kd)$. 
Alors $\Omega(\s)$ appartient à $\Aa_\ell(\D,\0,w)$ si et seulement 
s'il existe une représentation irréductible cuspidale $\ell$-adique 
$\st$ de $\GL_u(\kd)$ dont la réduction modulo $\ell$ soit $\s$ et 
telle que $w(\st)=w$.
\end{lemm}

Notant $\BB_{\ell}(q,m,d,w)$ l'image réciproque de $\Aa_\ell(\D,\0,w)$ par 
\eqref{Gina}, qui est décrite par le lemme \ref{riddle}, on obtient ainsi~:
\begin{equation*}
\boldsymbol{a}_{\ell}(\D,\0,w) = \sum_{\s} \frac{s(\s)}{d}
\end{equation*}
où $\s$ décrit l'ensemble $\BB_{\ell}(q,m,d,w)$.

Fixons une clôture algébrique $\overline{\kk}$ de $\kd$ et notons 
$\kl$ l'extension de $\ke$ de degré $nw^{-1}$ incluse dans 
$\overline{\kk}$. 
Pour tout $y\in\kl^{\times}$, posons~:
\begin{eqnarray*}
\deg(y) &=& \text{degré de $y$ sur $\ke$}, \\
{\upepsilon}(y) &=& \text{ordre de $q^{\deg(y)}$ dans $(\ZZ/\ell\ZZ)^\times$}.
\end{eqnarray*}
et notons $\YY_{\ell}(q,n,w)$ l'ensemble des $y\in\kl^{\times}$, 
d'ordre premier à $\ell$, 
tels que ${\upepsilon}(y)$ soit égal à $w_0$, 
le plus grand diviseur de $w$ premier à $\ell$.
Nous allons définir une application surjective~:
\begin{equation*}
\YY_{\ell}(q,n,w) \to \BB_{\ell}(q,m,d,w)
\end{equation*}
dépendant du choix de $\D$. 

Soit $y\in\YY_{\ell}(q,n,w)$. 
Notons~:
\begin{equation*}
r(y) = \frac{\deg(y)}{(\deg(y),d)}
\end{equation*} 
le degré de $y$ sur $\kd$ et $\boldsymbol{\uptau}_\D(y)$ la re\-pré\-senta\-tion 
irréductible supercus\-pidale $\ell$-modulaire du groupe $\GL_{r(y)}(\kd)$ 
correspon\-dant à $y$ par \eqref{james}. 
Posons $s(y)=s(\boldsymbol{\uptau}_\D(y))$ et~:
\begin{equation*}
\d(y)=\frac{w}{(w,s(y))}.
\end{equation*}
On a le lemme suivant. 

\begin{lemm}
Il existe une unique représentation irréductible cuspidale~:
\begin{equation*}
\boldsymbol{\upsigma}_\D(y) 
\end{equation*} 
dont le support supercuspidal soit égal à $k(y)\cdot\boldsymbol{\uptau}_\D(y)$.
\end{lemm}

\begin{proof}
D'après le paragraphe \ref{P32}, il suffit de prouver que 
le plus grand diviseur de $k$ pre\-mier à $\ell$, noté $k_0$, 
est égal à l'ordre de $q^{dr(y)}$ dans $(\ZZ/\ell\ZZ)^\times$.
Comme on a ${\upepsilon}(y)=w_0$ d'une part et $r(y)d = s(y)\deg(y)$
d'autre part, cet ordre est égal à~:
\begin{equation*}
\frac{w_0}{(w_0,s(y))} = k_0,
\end{equation*}
ce qui met fin à la démonstration. 
\end{proof}

\begin{lemm}
Pour tout $y\in\YY_{\ell}(q,n,w)$,
la représentation $\boldsymbol{\upsigma}_\D(y)$ appartient à $\BB_{\ell}(q,m,d,w)$.
\end{lemm}

\begin{proof}
Il faut d'abord prouver que le degré de $\boldsymbol{\upsigma}_\D(y)$ divise 
$m$. 
D'abord, on a~:
\begin{equation*}
s(y)=\frac{d}{(\deg(y),d)}.
\end{equation*}
Par hypothèse sur $y$, il existe un entier $t\>1$ tel que $n=wt\cdot\deg(y)$.
On en déduit que~:
\begin{equation*}
r(y)=\frac{m}{(m,wt)}
\quad\text{et}\quad
s(y)=\frac{wt}{(m,wt)}
\quad\text{et}\quad
k(y)=\frac{(m,wt)}{((m,wt),t)},
\end{equation*}
puis que l'entier $\deg(\boldsymbol{\upsigma}_\D(y))=k(y)r(y)$ divise $m$. 
D 'après le début de la section \ref{SECTION5}, il ne reste 
qu'à vé\-ri\-fier que 
$a=(w,s(y))$ divise $s(y)$ et que $s(y)a^{-1}=s(y)(w,s(y))^{-1}$ est premier à $k(y)$, 
ce qui est im\-mé\-diat. 
\end{proof}

Ceci définit une application
$\boldsymbol{\upsigma}_\D : y \mapsto{} \boldsymbol{\upsigma}_\D(y)$
de $\YY_{\ell}(q,n,w)$ dans $\BB_\ell(q,m,d,w)$.

\begin{prop}
L'application $\boldsymbol{\upsigma}_\D$ est surjective, 
et ses fibres sont les classes de conjugaison sous l'automorphisme de 
Frobenius $x\mapsto x^{q^d}$.
\end{prop}

\begin{proof}
Soit $\s\in\BB_\ell(q,m,d,w)$. 
Il y a une unique représentation irréductible supercuspidale 
$\ell$-modulaire $\boldsymbol{\uptau}(\s)$ telle que le support supercuspidal de 
$\s$ soit égal à $\d(\s)\cdot \boldsymbol{\uptau}(\s)$. 
Soit $y\in\overline{\kk}{}^\times$ un paramètre de James pour 
$\boldsymbol{\uptau}(\s)$. 
Il est d'ordre premier à $\ell$ et vérifie ${\upepsilon}(y)=w_0$, 
mais il est \textit{a priori} dans une extension de $\ke$ de degré 
$n\d(\s)^{-1}$. 
Toutefois, par hypo\-thèse sur $\s$, l'entier $w\d(\s)^{-1}$ divise $s(\tau)$. 
On en déduit que $y$ est dans une extension de $\ke$ de degré $nw^{-1}$, 
\ie que $y\in\YY_\ell(q,n,w)$, ce qui prouve la surjectivité. 
Ensuite, on a~:
\begin{equation*}
\label{FabricedelDongo}
\deg(\s) = \frac{\deg(y)}{(\deg(y),d)}\cdot\frac{w}{(w,s(y))}
\quad\text{et}\quad
\d(\s) = (w,\deg(\s)) = \frac{w}{(w,s(y))}.
\end{equation*}
Il s'ensuit que l'application $\s\mapsto\boldsymbol{\uptau}(\s)$ est injective. 
\end{proof}

On en déduit que~:
\begin{eqnarray*}
\boldsymbol{a}_{\ell}(\D,\0,w) 
&=& \sum_{y} \frac{s(\boldsymbol{\upsigma}_\D(y))}{d}\cdot\frac{1}{\deg(\boldsymbol{\upsigma}_\D(y))} \\
&=& \sum_{y} \frac{1}{\deg(y)}
\end{eqnarray*}
(où $y$ décrit $\YY_\ell(q,n,w)$),
valeur que l'on note $\boldsymbol{y}_{\ell}^{1}(q,n,w)$.

\subsection{}

Supposons maintenant que $\TT$ est quelconque, de degré
$g$ divisant $nw^{-1}$.  
On pose~:
\begin{equation*}
q(\TT)=q^{f(\TT)},
\quad
n(\TT) = m'd' = \frac{n}{\gg},
\end{equation*}
où $f(\TT)$ désigne le degré résiduel de l'endoclasse $\TT$.
Fixons un entier $u$ de la forme \eqref{quark} et une réalisation 
$([\La,n_\La,0,\b],\t)$, où 
$\t$ est un caractère simple ($\ell$-modulaire) attaché à la strate 
simple $[\La,n_\La,0,\b]$ de $\Mat_u(\D)$.
On suppose que 
l'intersection entre l'ordre hérédi\-taire associé à $\La$ 
et le cen\-tralisateur de $\F[\b]$ dans $\Mat_u(\D)$ est maximal.
Fixons aussi une $\b$-extension $\k$ de ${\t}$ et posons 
$\J=\J(\b,\La)$. 
L'application $\s\mapsto \k\otimes\s$ induit une surjection~:
\begin{equation}
\label{elevteria}
\s \mapsto [\J,\k\otimes\s] \leftrightarrow [\GL_u(\D),\rho]
\end{equation}
entre classes d'isomorphisme de représentations 
irréductibles cuspidales $\ell$-modulaires du groupe $\GL_{u'}(\kd)$ et 
classes inertielles de représentations irréductibles cuspidales 
$\ell$-modulaires du groupe $\GL_{u}(\D)$ d'endoclasse $\TT$.
L'image d'une représentation $\s$ par \eqref{elevteria} appartient à  
$\Aa_{\ell}(\D,\TT,w)$ si et seulement si $\s$ est dans $\BB_{\ell}(q(\TT),m',d',w)$, 
et l'ensemble des antécédents d'une classe inertielle $[\GL_u(\D),\rho]$ par 
\eqref{elevteria} est de cardinal $s(\rho)$.
On obtient ainsi~:
\begin{equation*}
\boldsymbol{a}_{\ell}(\D,\TT,w) 
= \sum_{\s} \frac{s(\s)}{d'}
= \boldsymbol{y}_{\ell}^{1}(q(\TT),n(\TT),w)
\end{equation*}
où $\s$ décrit l'ensemble $\BB_{\ell}(q(\TT),m',d',w)$. 

\subsection{}

Finalement, si l'on fixe un nombre rationnel $j\>0$,
et si l'on pose~:
\begin{equation*}
\Aa_\ell(\D,w,j) = \bigcup\limits_{l(\TT)\<j} \Aa_\ell(\D,\TT,w),
\end{equation*}
alors on obtient l'égalité~:
\begin{equation*}
\boldsymbol{a}_{\ell}(\D,w,j) 
= \sum\limits_{l(\TT)\<j} \boldsymbol{a}_{\ell}(\D,\TT,w) 
= \sum\limits_{l(\TT)\<j} \boldsymbol{y}_{\ell}^{1}(q(\TT),n(\TT),w)
= \boldsymbol{a}_{\ell}(\F,w,j),
\end{equation*}
ce qui met fin à la 
preuve de la proposition \ref{comptagedeclasses} et du corollaire \ref{DonaTartt}. 

\section{Compatibilité de la classification de Zelevinski aux 
congruences}
\label{P12}

Dans cette section, nous prouvons les propositions \ref{ConjRedModlSpehIntro}
et \ref{Zcongintro} et les théorèmes \ref{CongSpehIntro} et
\ref{ComptagePIIntro}. 

Rappelons que
$\R$ désigne un corps algébriquement clos de caractéristique différen\-te de $p$. 
Pour $\pi\in\RA(\R)$ et 
$\s\in\XA(\R)$, on notera $[\pi:\s]$ la multi\-plicité de $\s$ dans $\pi$.

\subsection{}
\label{P81}

Un {\it segment} est un couple $[\cuspi,n]$ formé d'une classe d'isomorphisme 
de représentation irré\-duc\-tible cuspidale $\cuspi$ et d'un entier $n\>1$. 
Dans \cite{MSc} §7.2, nous associons à un tel segment $[\cuspi,n]$ une 
sous-représentation irréductible de l'induite~:
\begin{equation}
\label{INDRHON8}
\cuspi\times\cuspi\nu_{\cuspi}^{}\times\dots\times\cuspi\nu_{\cuspi}^{n-1}
\end{equation}
notée $\Z(\cuspi,n)$.
Quand $\R$ est le corps des nombres complexes, 
elle est uniquement déterminée par cette propriété~; 
sa duale de Zelevinski est une représentation essentiellement 
de carré intégrable qui est l'unique quotient irréductible de \eqref{INDRHON8}.
Pour un corps $\R$ général, 
nous aurons seulement besoin de la propriété suivante de $\Z(\cuspi,n)$.
Notons $m$ le degré de $\cuspi$.

\medskip

\begin{tabular}{lp{14cm}}
{(\ref{P81}.1)} & 
\textit{Pour tout $k\in\{1,\dots,n-1\}$, on a
$\rp_{(mk,m(n-k))}(\Z(\cuspi,n))=\Z(\cuspi,k)\otimes
\Z(\cuspi\nu_{\cuspi}^k,n-k)$.} \\
\end{tabular}

\medskip

\noindent
Ajoutons-y deux propriétés de 
l'application $[\cuspi,n]\mapsto\Z(\cuspi,n)$.
On renvoie au paragraphe \ref{GeEm1} pour la notation $\rho\cdot\mu$.

\medskip

\begin{tabular}{lp{14cm}}
{(\ref{P81}.2)} & 
\textit{Pour tout caractère $\mu$ de $\mult\F$, 
on a $\Z(\cuspi\cdot\mu,n)=\Z(\cuspi,n)\cdot\mu$.} \\
\end{tabular}

\medskip

\begin{tabular}{lp{14cm}}
{(\ref{P81}.3)} & 
\textit{Si $\cuspi'$ est une représentation irréductible cuspidale 
et $n'\>1$ un entier, alors $\Z(\cuspi',n')$ est isomorphe à 
$\Z(\cuspi,n)$ si et seulement si les segments $[\cuspi,n]$ et 
$[\cuspi',n']$ sont égaux.} \\
\end{tabular}

\medskip

\noindent
On introduit les définitions suivantes. 

\begin{defi}
\label{BenOuiMonGars}
Soit $\pi$ une représentation de la forme $\Z(\cuspi,n)$, 
avec $\cuspi$ irréductible cuspidale de degré $m$ et $n\>1$.
\begin{enumerate}
\item
On pose $n(\pi)=n(\cuspi)$.
\item
On appelle \textit{classe de torsion} de $\pi$ l'ensemble 
$\langle\pi\rangle$ des classes de $\pi\chi$
où $\chi$ décrit les caractères non ramifiés de $\G_m$.
\end{enumerate}
\end{defi}

\begin{rema}
D'après les propriétés (\ref{P81}.2) et (\ref{P81}.3), 
l'entier $n(\pi)$ est le nombre 
de carac\-tè\-res non ramifiés $\chi$ de 
$\G_m$ tels que $\pi\chi=\pi$, 
et $\langle\pi\rangle$ est l'ensemble des représentations
$\Z(\cuspi\chi,n)$ où $\chi$ décrit les caractères non ramifiés de $\G_m$, 
\ie l'ensemble des représentations
$\Z(\cuspi',n)$ où $\cuspi'$ décrit les représentations de la classe d'inertie
de $\cuspi$.

Dans le cas où $\pi$ est cuspidale, \ie lorsque $n=1$, 
l'ensemble $\langle\pi\rangle$ est la classe d'inertie de $\pi$.
\end{rema}

\begin{defi}
\begin{enumerate}
\item
Etant donné un entier $m\>1$, nous noterons~:
\begin{equation*}
\label{Biclou}
\Zz(\G_m,\R)
\end{equation*}
l'ensemble des classes de représentations irréductibles de $\G_m$ de la forme 
$\Z(\cuspi,r)$ où $r\>1$ décrit 
les diviseurs de $m$ et $\cuspi$ les re\-pré\-sentations cuspidales de 
$\G_{mr^{-1}}$.
Les représentations irréductibles de cette for\-me 
sont appelées les \textit{représentations de Speh} de $\G_m$.
\item
Si $\R$ est de caractéristique $>0$, une représentations de Speh 
$\Z(\rho,r)$ de $\G_m$ comme ci-dessus est dite \textit{super-Speh} si $\cuspi$ est 
supercuspidale~; nous noterons~:
\begin{equation*}
\label{BiclouSS1}
\Zz_1(\G_m,\R)
\end{equation*}
l'ensemble des classes de représentations super-Speh 
de $\G_m$.
\end{enumerate}
\end{defi}

\subsection{}
\label{P51}
\label{LM2}

Fixons 
une $\R$-représentation irréductible cuspidale $\cuspi$ de degré $m\>1$.
Soit $\Seg_\cuspi$ l'ensemble des segments de la forme $[\cuspi\chi,n]$ où 
$n\>1$ est un entier et $\chi$ un ca\-rac\-tère non ra\-mifié de $\G_{m}$,
et soit $\XA_\cuspi$ l'en\-sem\-ble des classes de représentations irréductibles 
qui sont des sous-quotients 
d'in\-dui\-tes de la forme $\cuspi\chi_1\times\dots\times\cuspi\chi_n$ où $n\>0$ 
est un entier et $\chi_1,\dots,\chi_n$ des caractères non ramifiés 
de $\G_{m}$. 
Pour tout ensemble $\X$, notons $\NN(\X)$ l'ensemble des appli\-ca\-tions 
de $\X$ dans $\NN$ à support fini.
Nous construisons dans \cite{MSc} une application surjective~:
\begin{equation*}
\label{Melusine}
\NN(\Seg_{\cuspi})\ \ffr{\Z}\ \XA_{\cuspi}
\end{equation*}
qui est bijec\-tive lorsque $\cuspi$ est supercuspidale, 
et coïncide avec la classifica\-tion de Zele\-vin\-ski \cite{Ze2}
lorsque $\R$ est le corps des nombres complexes.

Il sera commode d'employer un formalisme légèrement différent 
de celui de \cite{MSc}. 
Un \textit{segment for\-mel} est une pai\-re $[a,n]$ formée de deux entiers 
$a\in\ZZ$ et $n\>1$.
L'entier $n\>1$ est la \textit{lon\-gueur} du segment formel.
Si $[a,n]$ est un segment formel, on pose~:
\begin{equation*}
[a,n]\boxtimes\cuspi=[\cuspi\nu_{\cuspi}^{a},n]
\end{equation*}
qui est un segment au sens donné plus haut.
On note $\Seg$ l'ensemble des segments formels,
et on ap\-pelle \textit{multisegment formel} un élément de $\NN(\Seg)$.
On a par linéarité une application $\upmu\mapsto\upmu\boxtimes\cuspi$ 
de $\NN(\Seg)$ dans $\NN(\Seg_{\cuspi})$.
Par linéarité également, on définit la longueur d'un multisegment formel.

Soient $\upmu=[a_1,n_1]+\dots+[a_r,n_r]$ et
$\upnu=[c_1,m_1]+\dots+[c_s,m_s]$ deux multisegments for\-mels de 
même longueur, dont les segments formels 
sont supposés être rangés par longueur décrois\-sante. 
On écrit $\upmu\trianglelefteq\upnu$ si~: 
\begin{equation*}
\sum\limits_{i\<k}n_i\<\sum\limits_{i\<k}m_i
\end{equation*}
pour tout $k\in\{1,\dots,{\rm min}(r,s)\}$.
Ceci définit une relation d'ordre $\trianglelefteq$ 
entre multisegments for\-mels de même longueur. 

Pour $\upmu,\upnu\in\NN(\Seg)$ comme ci-dessus, notons~:
\begin{equation*}
\textsf{m}(\upmu,\upnu,\cuspi)
\end{equation*}
la multiplicité~de 
$\Z(\upnu\boxtimes\cuspi)$ dans la re\-pré\-sentation 
$\Z([a_1,n_1]\boxtimes\cuspi)\times\dots\times\Z([a_r,n_r]\boxtimes\cuspi)$.
Lorsque $\upmu,\upnu$ varient, ces multiplicités vérifient la condition 
suivante.

\medskip

\begin{tabular}{lp{14cm}}
{(\ref{P51}.1)} & 
\textit{On a ${\sf m}(\upmu,\upmu,\cuspi)=1$ et, 
si ${\sf m}(\upmu,\upnu,\cuspi)\neq0$ alors $\upmu\trianglelefteq\upnu$.} \\
\end{tabular}

\medskip

\noindent
Ceci caractérise les représentations 
$\Z(\upmu\boxtimes\cuspi)$, $\upmu\in\NN(\Seg)$, 
à isomorphisme près. 

\begin{rema}
Au sujet de la dépendance de ces multiplicité en $\cuspi$, 
voir la proposition 4.15 et la remarque 4.18 de \cite{MSi}.
\end{rema}

Nous renvoyons à \cite{MSc} pour la notion de représentation 
irréductible rési\-duelle\-ment non dégéné\-rée.
Soit $\upmu$ un multisegment formel comme ci-dessus, 
de longueur $n$,
et dont les segments formels sont supposés être rangés par longueur 
décrois\-sante. 
On note $\upmu^\vee$ la partition de $n$ conjuguée à $(n_1,\dots,n_r)$. 
Par \cite{MSc} Proposition 9.19, le module de Jacquet~:
\begin{equation*}
\rp_{m\cdot\upmu^\vee}(\Z(\upmu\boxtimes\cuspi))
\end{equation*}
contient un sous-quotient irréductible résiduellement non dégénéré, 
unique et apparaissant avec multiplicité $1$~: on le note 
$\Sp(\cuspi,\upmu^{\vee})$.
(Si l'on écrit $\upmu^{\vee}$ sous la forme $(a_1,\dots,a_s)$ avec $s\>1$,
alors $m\cdot\upmu^{\vee}$ désigne la famille $(ma_1,\dots,ma_s)$.)

\begin{rema}
Dans le cas particulier où $\upmu^\vee=(n)$, on retrouve la re\-pré\-sen\-tation 
résiduel\-le\-ment non dégénérée $\Sp(\cuspi,n)$ du paragraphe \ref{P32}.
\end{rema}

Si $\upmu,\upnu$ sont deux multisegments for\-mels de même longueur,
on note~:
\begin{equation*}
{\textsf{e}}(\upmu,\upnu) = {\textsf{e}}(\upmu,\upnu,\rho) 
\end{equation*}
la multiplicité de $\Sp(\cuspi,\upmu^{\vee})$ dans le module de Jacquet 
$\rp_{m\cdot\upmu^\vee}(\Z(\upnu\boxtimes\cuspi))$.
D'après \cite{MSc} Proposition 9.19, on a la propriété suivante. 

\medskip

\begin{tabular}{lp{14cm}}
{(\ref{P51}.2)} & 
\textit{On a ${\sf e}(\upmu,\upmu)=1$ et, si ${\sf e}(\upmu,\upnu)\neq0$ alors 
$\upnu\trianglelefteq\upmu$.} \\
\end{tabular}

\medskip

De ceci on déduit le lemme suivant, qui nous sera utile dans la section \ref{S11}.

\begin{lemm}
\label{lemm:cle} 
Soient $\upmu_1,\dots,\upmu_r$ des multisegments formels de 
longueur $n\>1$, et soient des en\-tiers relatifs $a_1,\dots,a_r\in \ZZ$.
On suppose que les représentations irréductibles~: 
\begin{equation*}
\pi_i=\Z(\upmu_i\boxtimes\rho),
\quad
i\in\{1,\dots,r\}
\end{equation*}
sont deux à deux distinctes et rési\-duellement dégénérées. 
On pose $\Pi=a_1\pi_1+\dots+a_r\pi_r$, et on suppose que, 
pour tout $k\in\{1,\dots,n-1\}$, on a $r_{(mk,m(n-k))}(\Pi)=0$. 
Alors $\Pi=0$.
\end{lemm}

\begin{proof}
Par hypothèse, et d'après \cite{MSc} Corollaire 8.5, 
les partitions $\upmu_1^\vee,\dots,\upmu_r^\vee$ 
sont toutes distinctes de $(n)$. 

Supposons que $\Pi$ soit non nul. 
On choisit un $i$, parmi ceux pour lesquels $a_i \neq 0$, 
tel que $\upmu_i^\vee$ soit minimale. 
Alors $\Sp(\rho,\upmu_i^\vee)$ apparaît avec multiplicité $1$ dans le module 
de Jacquet $\rp_{m\cdot\upmu_i^\vee}(\pi_i)$ et n'apparaît pas dans le module de 
Jacquet de $\rp_{m\cdot\upmu_i^\vee}(\pi_j)$ pour $j \neq i$. 
Par transitivité des foncteurs de Jacquet, 
ceci contredit notre hypothèse $r_{(mk,m(n-k))}(\Pi)=0$ pour tout $1\<k\<n-1$. 
\end{proof}

\subsection{}

On suppose maintenant que $\R$ est le corps $\qlb$,
et on fixe un multisegment formel $\upmu$ de longueur $n\>1$.
On a une application~:
\begin{equation}
\label{Zrt}
\cuspit\mapsto\Z(\upmu\boxtimes\cuspit)
\end{equation}
qui à toute $\qlb$-représentation irréductible cuspidale $\cuspit$ de degré $m\>1$ 
associe une repré\-sen\-tation irréductible de 
degré $mn$.
Si $\cuspit$ est entière, alors son image par \eqref{Zrt} l'est aussi.
Les paragraphes \ref{Racaille} à \ref{CasGenZcong} seront consacrés à la preuve 
du résultat suivant (voir la proposition \ref{Zcongintro}). 

\begin{prop}
\label{Zcong}
Si $\cuspit_1$, $\cuspit_2$ sont des $\qlb$-représentations 
irréductibles cuspidales entières con\-gruentes de $\G_m$, $m\>1$, 
alors les représentations 
$\Z(\upmu\boxtimes\cuspit_1)$ et $\Z(\upmu\boxtimes\cuspit_2)$ 
sont congruentes. 
\end{prop}

\subsection{}
\label{Racaille}

Nous commençons par des préliminaires techniques. 
Supposons à nouveau que $\R$ est un corps algébriquement clos de 
caractéristique différen\-te de $p$. 
On pose $\G=\G_m$ avec $m\>1$. 

Soit $[\La,n_\La,0,\b]$ une strate simple de $\Mat_m(\D)$.
Comme dans le paragraphe \ref{PARA21}, 
on suppose que l'inter\-section de l'ordre héré\-di\-taire associé à $\La$ 
avec le centralisateur de $\E=\F[\b]$ dans $\Mat_m(\D)$ est un ordre maximal.
On fixe un $\R$-caractère simple $\t$ attaché à cette strate et une 
$\b$-extension $\k$ de $\t$.
On reprend toutes les notations du paragraphe \ref{PARA21}. 

On note $\KM$ le foncteur $\pi\mapsto\Hom_{\J^1}(\k,\pi)$ 
de la catégorie des $\R$-représentations lisses de $\G$ dans 
celle des $\R$-représentations de $\J/\J^1$ (identifié à $\GB$) 
en faisant agir $\J$ sur $\KM(\pi)$ par la formule~:
\begin{equation*}
x\cdot f = \pi(x) \circ f \circ \k(x)^{-1}
\end{equation*}
pour $x\in\J$ et $f\in\KM(\pi)$.
Ce fonc\-teur a été étudié dans \cite{MSt,SSb}.
Il est exact et envoie repré\-sen\-tations de longueur 
finie sur représentations de longueur finie.
Il induit donc un morphisme de groupes~: 
\begin{equation} 
\label{DEFKMR}
\RA(\G,\R) \to \RA(\GB,\R)
\end{equation}
que l'on note encore $\KM$, et qui a les propriétés suivantes.

\medskip

\begin{tabular}{p{1cm}p{14cm}}
(\ref{Racaille}.1) & 
\textit{Pour tout $\pi\in\XA(\G,\R)$ et tout caractère non ramifié 
$\chi$ de $\G$, on a $\KM(\pi\chi)=\KM(\pi)$. }
\end{tabular}

\medskip

\noindent
Notons ${\bf\Theta}$ l'endo-classe définie par la paire 
$([\La,n_\La,0,\b],\t)$.
On renvoie à \cite{MSt} \S5.2 pour les deux propriétés suivantes.

\medskip

\begin{tabular}{p{1cm}p{14cm}}
(\ref{Racaille}.2) & 
\textit{Etant donné $\pi\in\XA(\G,\R)$, pour que $\KM(\pi)$ 
soit non nul, il faut et suffit que 
le support cuspidal de $\pi$ soit formé de représentations cuspidales 
d'endo-classe ${\bf\Theta}$. }
\end{tabular}

\medskip

\begin{tabular}{p{1cm}p{14cm}}
(\ref{Racaille}.3) & 
\textit{Si $\rho\in\XA(\G,\R)$ est cuspidale et d'endo-classe ${\bf\Theta}$, 
alors $\KM(\rho)$ est cuspidale et chacun de ses facteurs 
irréductibles apparaît avec multiplicité $1$. 
}
\end{tabular}

\medskip

\noindent
Plus précisément, selon \cite{MSt} Lemme 5.3, 
si $\rho$ contient le type simple maximal $\k\otimes\s$, 
alors $\KM(\rho)$ est la somme des $\Gal(\kd/\ke)$-conjugués de $\s$, 
\ie~:
\begin{equation*}
\KM(\rho) = \s_1 + \dots + \s_b
\end{equation*}
où $\{\s_1,\dots,\s_b\}$ est l'orbite de $\s$ sous $\Gal(\kd/\ke)$.
Le nombre 
$b=b(\rho)$ de $\Gal(\kd/\ke)$-conjugués de $\s$ vérifie l'identité 
$b(\rho)s(\rho)=d'$, où $s(\rho)$ est l'entier défini par \eqref{DEFSRHO}. 

\subsection{}
\label{Floufi}

Procédant comme dans \cite{MSt} Remarque 5.7,
on définit à partir de $\KM$, 
pour tout entier $n\>1$, un foncteur 
de la catégorie des $\R$-représentations lisses de $\G_{mn}$ 
dans celle des $\R$-représentations de $\GL_{m'n}(\kd)$.
Prenant la somme directe des morphismes de groupes qu'ils définissent
et prolongeant par $0$ sur les $\RA(\G_{k},\R)$ pour les $k\>1$ non 
multiples de $m$, on forme~un morphisme de bigèbres~: 
\begin{equation}
\label{AEC}
\RA(\R) \to \bigoplus\limits_{k\>0} \RA(\GL_{k}(\kd),\R) 
\end{equation}
que l'on note encore $\KM$, les structures de bigèbre étant définies 
au moyen des foncteurs d'induction et de restriction paraboliques
(voir \eqref{VentreDieu} et \eqref{VentreDieu2}).

Etant donnés une $\R$-représentation irréductible cuspidale $\cuspif$ de $\GB$ 
et un entier $n\>1$, l'al\-gè\-bre des endo\-mor\-phis\-mes de l'in\-dui\-te 
parabolique 
$\cuspif^{\times n}$ est 
une algèbre de Hecke de type $\A$, et cette induite possède une unique 
sous-représentation irréductible
correspondant au ca\-rac\-tè\-re trivial de cette algèbre 
(voir James \cite{James} ainsi que \cite{MSc} \S3.3 et \cite{MSf} \S4.2).
On désigne par $z(\cuspif,n)$ cette sous-re\-pré\-sentation irréductible de
l'in\-dui\-te parabolique $\cuspif^{\times n}$.

\begin{prop}
\label{predeldongo}
Soit $\cuspi$ une $\R$-représentation irréduc\-ti\-ble cuspidale de degré $m$ 
et d'endo-classe $\TT$,
et soient $\cuspif_1,\dots,\cuspif_b$ les facteurs irréductibles de 
$\KM(\cuspi)$. 
Pour tout $n\>1$, on a~: 
\begin{equation*}
\KM(\Z(\cuspi,n)) = \sum\limits_{\a}
z(\cuspif_1,n_1)\times z(\cuspif_2,n_2) \times\dots\times z(\cuspif_b,n_{b}) 
\end{equation*}
où $\a$ décrit l'ensemble des familles 
$(n_1,n_2,\dots,n_b)$ d'entiers naturels de somme $n$.
\end{prop}

\begin{proof}
On procède par récurrence sur $n\>1$, 
le cas où $n=1$ étant immédiat. 
Fixons un entier $n\>2$ et notons $\A=\A(\cuspi,n)$ 
l'image de $\Z(\cuspi,n)$ par $\KM$.
Si $\a$ est une famille de $b$ entiers positifs ou nuls 
de somme $n$, notée $\a=(n_1,\dots,n_b)$, on a grâce à (\ref{P81}.1)~:
\begin{equation*}
\rp_{m'\cdot\a}(\A(\cuspi,n)) =
\KM(\Z(\cuspi,n_1))\otimes\KM(\Z(\cuspi\nu_{\cuspi}^{n_1},n_2))\otimes\dots\otimes
\KM(\Z(\cuspi\nu_{\cuspi}^{n_1+\dots+n_{b-1}},n_b))
\end{equation*}
où $\rp_{m'\cdot\a}$ désigne le foncteur de restriction parabolique 
sur la catégorie des $\R$-représentations de $\GL_{m'n}(\kd)$ 
qui correspond à la famille $m'\cdot\a=(m'n_1,m'n_2,\dots,m'n_b)$.
Compte tenu de (\ref{Racaille}.1) et de (\ref{P81}.2),
cette repré\-sen\-ta\-tion est égale à 
$\A(\cuspi,n_1)\otimes\A(\cuspi,n_2)\otimes\dots\otimes\A(\cuspi,n_b)$.

D'après \cite{MSc} Proposition 7.21, pour chaque $i\in\{1,\dots,b\}$, 
la représentation $z(\cuspif_i,n_i)$ appa\-raît dans $\A(\cuspi,n_i)$.
Par adjonction, on en déduit que l'induite~:
\begin{equation}
\label{ALBA}
z(\a) = 
z(\cuspif_1,n_1)\times z(\cuspif_2,n_2) \times\dots\times z(\cuspif_b,n_{b}) 
\end{equation}
qui est irréductible d'après \cite{MSf} Proposition 4.3 car 
$\s_1,\dots,\s_b$ sont deux à deux non isomorphes,
apparaît dans $\A$, et ce pour toute composition $\a$. 
En d'autres termes, la représentation~:
\begin{equation*}
\B = \sum\limits_{\a}
z(\cuspif_1,n_1)\times z(\cuspif_2,n_2) \times\dots\times z(\cuspif_b,n_{b})
= \sum\limits_{\a} z(\a)
\end{equation*}
est une sous-représentation de $\A$.

Etant donné $k\in\{1,\dots,n-1\}$, appliquons le 
foncteur $\rp_k=\rp_{(m'k,m'(n-k))}$.
Par hypothèse de récurrence, on trouve~:
\begin{equation*}
\rp_k(\A) = \sum\limits_{\a}\sum\limits_{\b} z(\a)\otimes z(\b)
\end{equation*}
où $\a$ décrit les compositions de $k$ et $\b$ celles de $n-k$.
Par ailleurs, d'après la formule de Mackey, on a~:
\begin{equation*}
\rp_k(\B) = 
\sum\limits_{\g} \rp_k(z(\g)) = 
\sum\limits_{\g} \sum\limits_{\delta} z(\delta)\otimes z(\g-\delta)
\end{equation*}
où $\g$ décrit les compositions $(g_1,\dots,g_b)$ de $n$ et $\delta$ les 
compositions $(d_1,\dots,d_b)$ de $k$ telles qu'on ait $d_i\<g_i$ pour tout 
$i$, et où l'on note $\g-\delta$ la composition de $n-k$ formée des entiers 
$g_i-d_i$. 
En réarrangeant l'ordre des termes, on trouve $\rp_k(\A)$. 
Par conséquent, 
la représentation $\A-\B$ est cuspidale dans le membre de droite de 
\eqref{AEC}. 

Supposons que cette différence n'est pas nulle.
Alors $\Z(\cuspi,n)$ est résiduellement non dégénérée 
au sens de \cite{MSc} Section 8 (voir aussi le paragraphe \ref{P32}), 
ce qui ne peut se produire que si $n=1$ d'après \cite{MSc} Corollaire 8.5. 
\end{proof}

\subsection{}
\label{Floufi6}

Dans ce paragraphe, on suppose que $\R$ est de ca\-rac\-téristique 
$\ell>0$. 
On fixe une $\R$-représenta\-tion irréductible cuspidale $\rho$ contenant 
le type simple maximal $\k\otimes\s$.
\textit{On suppose aussi que $\rho\nu_\rho$ est isomorphe à $\rho$ ou, 
de façon équivalente, que $\omega(\rho)=1$} (voir \eqref{DEFOMEGA}).

Soit $\upmu=[a_1,n_1]+\dots+[a_r,n_r]$~un multisegment for\-mel de 
longueur $n\>1$.
La re\-présentation 
$\cuspif^{\times n_1}\otimes\dots\otimes\cuspif^{\times n_r}$ 
contient un sous-quo\-tient irréductible non dégénéré, 
unique et apparaissant avec multiplicité $1$, noté $\sp(\cuspif,\upmu)$. 
Dans le cas particulier où $\upmu^\vee$ est égale à $(n)$, on retrouve la représentation 
non dé\-gé\-nérée $\sp(\cuspif,n)$ du paragraphe \ref{P32}.

Pour tout $n\>1$, notons $\rho^{\times n}$ le produit de $n$ copies de $\rho$.
D'après \cite{MSc} \S8.1, on a la propriété suivante. 

\medskip

\begin{tabular}{p{1cm}p{14cm}}
(\ref{Floufi6}.1) & 
\textit{$\Sp(\cuspi,\upmu)$ est l'unique sous-quotient irré\-duc\-tible de 
$\cuspi^{\times n_1}\otimes\dots\otimes\cuspi^{\times n_r}$ dont 
l'image par $\KM$ contienne $\sp(\cuspif,\upmu)$,
et celle-ci apparaît avec multiplicité $1$. }
\end{tabular}

\begin{rema}
Si $\rho\nu_\rho$ n'est pas isomorphe à $\rho$, 
il y a aussi une propriété analogue à (\ref{Floufi6}.1).
Cela nécessiterait d'introduire des notations supplémentaires et nous n'en 
aurons pas besoin.
\end{rema}

Notons $\RA(\G_{mn},\cuspi)$ le sous-groupe de 
$\RA(\G_{mn},\R)$ engendré par les sous-quotients irréductibles 
de $\cuspi^{\times n}$.

\begin{lemm}
\label{STEP4}
Supposons que $\rho\nu_\rho\simeq\rho$.
Pour tout $\Pi\in\RA(\G_{mn},\cuspi)$, on a~:
\begin{equation*}
[\rp_{m\cdot\upmu^\vee}(\Pi):\Sp(\cuspi,\upmu^\vee)] =
[\rp_{m'\cdot\upmu^\vee}(\KM(\Pi)):\sp(\cuspif,\upmu^\vee)].
\end{equation*}
\end{lemm}

\begin{proof}
Ecrivons~:
\begin{equation*}
\Pi = \sum\limits_{\pi} k(\pi)\cdot\pi
\end{equation*}
où $\pi$ décrit $\XA(\G_{mn},\R)$ et $k(\pi)\in\ZZ$.
Comme $\rho\nu_\rho\simeq\rho$,
le seul sous-quotient irréductible rési\-duel\-lement
non dégénéré pouvant apparaître dans $\rp_{m\cdot\upmu^\vee}(\pi)$ est 
$\Sp(\cuspi,\upmu^\vee)$.
On a donc~:
\begin{equation*}
\rp_{m\cdot\upmu^\vee}(\pi) = m(\pi)\cdot\Sp(\cuspi,\upmu^\vee) + \delta
\end{equation*}
avec $m(\pi)\in\NN$ et où $\delta$ ne contient aucun facteur irréductible 
résiduellement non dégénéré. 
Alors, d'après (\ref{Floufi6}.1), la représentation 
$\KM(\Sp(\cuspi,\upmu^\vee))$ contient 
$\sp(\cuspif,\upmu^\vee)$ avec multiplicité $1$, 
et $\KM(\delta)$ ne contient pas $\sp(\cuspif,\upmu^\vee)$.
\end{proof}

\begin{lemm}
\label{STEP5}
Supposons que $\rho\nu_\rho\simeq\rho$.
Alors la restriction de $\KM$ à $\RA(\G_{mn},\cuspi)$ est injective. 
\end{lemm}

\begin{proof}
D'après le paragraphe \ref{P51}, tout $\Pi\in\RA(\G_{mn},\cuspi)$ 
s'écrit comme 
combinaison $\ZZ$-linéaire de repré\-sen\-tations 
irréductibles $\Z(\upnu\boxtimes\cuspi)$
où les $\upnu$ sont des multisegments formels de longueur $n$.
Ayant sup\-po\-sé que $\cuspi\nu_\cuspi\simeq\cuspi$, on a~:
\begin{equation*}
[a,k]\boxtimes\cuspi = [0,k]\boxtimes\cuspi
\end{equation*}
pour tout segment formel $[a,k]$.
Ainsi le multisegment $\upnu\boxtimes\cuspi$ ne dépend que de $\cuspi$ 
et des longueurs des segments composant $\upnu$. 
On peut donc supposer que $\upnu$ décrit les partitions de $n$,
en identifiant une partition $(n_1,\dots,n_r)$ de $n$ avec le 
multisegment formel $[0,n_1]+\dots+[0,n_r]$.
Nous écrivons donc~:
\begin{equation*}
\label{DECPI}
\Pi = \sum\limits_{\upnu} {\sf a}(\upnu)
\cdot\Z(\upnu\boxtimes\cuspi)
\end{equation*}
où $\upnu$ décrit les partitions de $n$ et où les ${\sf a}(\upnu)$ sont 
des entiers relatifs. 
On suppose que $\KM(\Pi)=0$. 
Il s'agit de montrer que les entiers ${\sf a}(\upnu)$ sont nuls 
pour toute partition $\upnu$ de $n$.

Appliquant le lemme \ref{STEP4} à $\Pi$, on a 
pour toute partition $\upmu$ de $n$~:
\begin{equation*}
\sum\limits_{\upnu\trianglelefteq\upmu} {\textsf{e}}(\upmu,\upnu)
{\sf a}(\upnu) = 0
\end{equation*}
(voir le paragraphe \ref{P51} pour la définition de ${\textsf{e}}(\upmu,\upnu)$).
Supposons qu'il y ait une partition $\upmu$ telle que ${\sf a}(\upmu)\neq0$
et choisissons-la minimale pour cette propriété (pour l'ordre $\trianglelefteq$). 
Ecrivons~:
\begin{equation*}
{\textsf{e}}(\upmu,\upmu){\sf a}(\upmu) + 
\sum\limits_{\upnu\triangleleft\upmu}{\textsf{e}}(\upmu,\upnu){\sf a}(\upnu) = 0.
\end{equation*}
Comme ${\textsf{e}}(\upmu,\upmu)=1$ et comme les ${\sf a}(\upnu)$ 
sont tous nuls pour $\upnu\triangleleft\upmu$ par minimalité de $\upmu$, 
on trouve ${\sf a}(\upmu)=0$. 
\end{proof}

\subsection{}

Pour prouver la proposition \ref{Zcong}
nous procédons en trois étapes, la première étant le cas où $\upmu$ est un 
segment formel et où $\cuspit_2$ est inertiellement équivalente à $\cuspit_1$.

Fixons un entier $m\>1$, et posons $\G=\G_m$.

\begin{lemm}
\label{lemmenr}
Soient $\cuspit$ une $\qlb$-représentation irréductible cuspidale entière
et $\tc$ un $\qlb$-ca\-rac\-tère non ramifié entier de $\G$. 
Les conditions suivantes sont équivalentes~:
\begin{enumerate}
\item
Les représentations $\cuspit\tc$ et $\cuspit$ sont congruentes modulo $\ell$. 
\item
L'ordre de la réduction de $\tc$ mod $\ell$ divise l'entier $n(\cuspit)$
(voir le paragraphe \ref{PARA21}).
\item
Il existe un $\qlb$-caractère non ramifié entier $\tc_\ell$ de $\G$ 
tel que $\cuspit\tc=\cuspit\tc_\ell$ et dont la ré\-duc\-tion mod $\ell$ est triviale.
\end{enumerate}
\end{lemm}

\begin{proof}
Supposons que $\tc$ vérifie la condition 2. 
Notons $\chi$ sa réduction mod $\ell$ 
et $\tc_0$ le relèvement de Teich\-müller de $\chi$.
Par hypothèse, l'ordre de $\tc_0$ divise $n(\cuspit)$, 
ce qui implique que $\cuspit\tc_0=\cuspit$.
Le carac\-tère $\tc_\ell=\tc(\tc_0)^{-1}$ a une réduction mod $\ell$ 
triviale, et on a~:
\begin{equation*}
\cuspit\tc = \cuspit\tc_0\tc_\ell = \cuspit\tc_\ell,
\end{equation*}
\ie que la condition 2 implique la condition 3, elle-même impliquant 
aussitôt la condition 1. 

Supposons maintenant que les représentations $\cuspit\tc$ et $\cuspit$ sont 
congruentes modulo $\ell$. 
Soit $\cuspi$ un facteur irréductible de la réduction mod $\ell$ de $\cuspit$, 
et soit $a$ sa longueur (voir le paragraphe \ref{P13}).
Il y a donc un entier $i\in\{0,\dots,a-1\}$ tel que $\cuspi\chi=\cuspi\nu^i$.

Supposons d'abord que $a=1$. 
Dans ce cas, $\cuspit\tc$ est congrue à $\cuspit$ si et seulement si on a 
$\cuspi\chi=\cuspi$, \ie si et seulement si $\chi^{n(\cuspi)}=1$.
Grâce à \eqref{nan}, ceci est équivalent à $\chi^{n(\cuspit)}=1$.

Supposons maintenant que $a>1$.
Il y a donc un entier $i\in\ZZ$ et 
un $\flb$-caractère non ramifié $\xi$ de $\G$ tel que 
$\chi=\xi\nu^{i}$ et $\cuspi\xi=\cuspi$, cette dernière condition étant
équivalente à $\xi^{n(\cuspi)}=1$.
L'ordre de $\nu$ étant égal à $\a$, l'ordre de $q$ dans 
$(\ZZ/\ell\ZZ)^\times$, celui de $\chi$ divise le plus grand multiple 
commun à $\a$ et $n(\cuspi)$, qui vaut $\ee(\cuspi)n(\cuspi)$ 
d'après \eqref{DEFERHO}.
D'après \eqref{nan}, 
il existe un entier $v\>0$ tel que $n(\cuspit)$ soit égal à $\ee(\cuspi)n(\cuspi)\ell^v$, 
ce dont on déduit que $\chi^{n(\cuspit)}=1$.
\end{proof}

\begin{coro}
\label{coronr}
Soit $\cuspit$ une $\qlb$-représentation irréductible cuspidale entière de $\G$,
et soit $\tc$ un $\qlb$-caractère non ramifié entier de $\G$. 
Si  $\cuspit\tc$ est congrue à $\cuspit$ modulo $\ell$, 
alors $\Z(\cuspit\tc,n)$ est 
con\-grue à $\Z(\cuspit,n)$ modulo $\ell$ pour tout $n\>1$.
\end{coro}

\begin{proof}
D'après le lemme \ref{lemmenr}, nous pouvons supposer que la réduction modulo 
$\ell$ de $\tc$ est tri\-viale. 
Soit $\widetilde{\mu}$ le $\qlb$-caractère non ramifié de $\mult\F$ tel que 
$\tc=\widetilde{\mu}\circ{\rm Nrd}$.
On a~:
\begin{equation}
\label{grossepute}
\Z(\cuspit\tc,n) = \Z(\cuspit\cdot\widetilde{\mu},n) = 
\Z(\cuspit,n)\cdot\widetilde{\mu}
\end{equation}
qui est bien congrue à $\Z(\cuspit,n)$ modulo $\ell$.
\end{proof}

\subsection{Preuve de la proposition \ref{ConjRedModlSpehIntro}}
\label{Racaillette}

Nous prouvons maintenant la proposition \ref{Zcong} dans le cas d'un segment 
(cf. proposition \ref{redZ1}).
Pour cela, nous prouvons la proposition \ref{ConjRedModlSpehIntro}, 
plus précise. 

Fixons une $\qlb$-représentation irréductible cuspidale 
entiè\-re $\cuspit$ de $\G$ et un entier $n\>1$. 
Posons $a=a(\cuspit)$ et fixons un facteur irréductible $\cuspi$ 
de la réduction modulo $\ell$ de $\cuspit$.
La preuve se fait par récurrence sur $n$, le cas $n=1$ étant immédiat
(voir \eqref{redcusp}).
Posons~:
\begin{equation*}
\Delta = \Delta(\cuspit,n) = \r_\ell(\Z(\cuspit,n))
\end{equation*}
et notons $\Sigma=\Sigma(\cuspit,n)$ le membre de droite de 
\eqref{ForrZIntro}, \ie~:
\begin{equation*}
\Sigma(\cuspit,n) = \sum\limits
\Z(\cuspi,r_0)\times\Z(\cuspi\nu,r_1)\times\dots\times\Z(\cuspi\nu^{a-1},r_{a-1})
\end{equation*}
où la somme porte sur 
les familles $(r_0,\dots,r_{a-1})$ d'entiers $\>0$ de somme $n$. 
Avec les notations du paragraphe \ref{P13},
on commence par remarquer que, si $a=1$, alors le résultat est immédiat 
car, dans ce cas, $\Delta$ est égal à $\Z(\cuspi,n)$ d'après \cite{MSc} 
Théorème 9.39.

Supposons donc dorénavant qu'on a $a>1$.
Les représentations 
$\cuspi\nu_\cuspi$ et $\cuspi$ 
sont donc isomorphes (\cite{MSt} Corollaire 3.24), 
\ie que nous sommes dans les conditions d'application 
des lemmes \ref{STEP4} et \ref{STEP5}.
D'après la proposition \ref{propa}(2), le plus grand diviseur de $a$ 
premier à $\ell$ est égal à $\ee(\cuspi)$.
Pour alléger les notations, cet entier sera noté $e$ dans la suite de la preuve. 

Comme $\Z(\cuspit,n)$ est l'unique sous-représentation irréductible de~:
\begin{equation*}
\cuspit\times\cuspit\nu_{\cuspit}^{}\times\dots\times\cuspit\nu_{\cuspit}^{n-1}
\end{equation*}
et comme la réduction mod $\ell$ commute à l'induction parabolique 
(\cite{MSc} \S1.2.3), 
les sous-quotients ir\-ré\-ductibles de $\Delta$ et de $\Sigma$ 
sont, d'après \eqref{Redrt},
des sous-quotients d'induites de la forme~:
\begin{equation*}
\label{HardiPetit}
\cuspi\nu^{i_1}\times\dots\times\cuspi\nu^{i_n},
\quad
i_1,\dots,i_n\in\ZZ. 
\end{equation*}
D'après la proposition \ref{propa}(2),
si $\pi$ est un sous-quotient irréductible d'une telle induite, 
il y a donc une famille d'entiers naturels $\a=(n_0,\dots,n_{e-1})$,
de somme $n$, telle que $\pi$ soit un sous-quotient de 
l'induite~: 
\begin{equation}
\label{upthere}
\I(\cuspi,\a) = 
\cuspi^{\times n_0} \times (\cuspi\nu)^{\times n_1} \times\dots\times 
(\cuspi\nu^{e-1})^{\times n_{e-1}}.
\end{equation}
La première chose à faire est de prouver qu'une telle famille est unique. 

\begin{lemm}
\label{UniAlpha}
Soit $\g$ une famille d'entiers $\>0$ de somme $n$ telle 
que $\pi$ soit un sous-quotient de $\I(\cuspi,\g)$.
Alors $\g$ est égale à $\a$.
\end{lemm}

\begin{proof}
Si $\cuspi$ est supercuspidale, alors le résultat est une conséquence de 
l'unicité du support supercuspidal (voir \cite{MSc} Théorème 8.16).

Supposons que $\cuspi$ n'est pas supercuspidale. 
D'après la paragraphe \ref{P32}, 
il y a un unique diviseur $k=k(\cuspi)$ de $m$ et une 
représentation irréductible supercuspidale $\tau$ de $\G_{r}$ telles que 
$kr=m$ et $\cuspi$ soit isomorphe à $\Sp(\tau,k)$.
Ecrivons $\pi$ comme un sous-quotient de~:
\begin{equation*}
(\tau\times\tau\nu_\tau^{}\times\dots\times\tau\nu_{\tau}^{k-1})^{\times n_0} 
\times\dots\times
(\tau\nu^{e-1}\times\tau\nu_\tau^{}\nu^{e-1}\times\dots\times\tau 
\nu_{\tau}^{k-1}\nu^{e-1})^{\times n_{e-1}}.
\end{equation*}
Réarrangeant l'ordre des termes et posant $e'=\ee(\tau)$, 
on voit que cette induite est égale à $\I(\tau,\a')$ avec 
$\a'=(m_0,\dots,m_{e'-1})$ où pour tout $i'\in\{0,\dots,e'-1\}$ 
on note~:
\begin{equation*}
m_{i'} = \sum\limits_{(i,t)} n_i,
\end{equation*}
la somme portant sur l'ensemble $\Y(i')$ des couples 
$(i,t)\in\{0,\dots,e-1\}\times\{0,\dots,k-1\}$ tels que 
les repré\-sen\-tations 
$\tau\nu_{\tau}^{t}\nu^i$ et $\tau\nu^{i'}$ soient isomorphes.

Nous posons $s'=s(\tau)$ pour alléger les notations.
D'après le lemme \ref{STEP0} appliqué à $\tau$, 
un couple $(i,t)$ appartient à $\Y(i')$ si et seulement si $e'$ divise $i-i'+ts'$, 
auquel cas $i-i'$ est un multiple du plus grand 
diviseur commun à $e'$ et $s'$, égal à $e$ 
d'après le lemme \ref{NASR}.

Par conséquent, si $(i,t)$ appartient à $\Y(i')$, 
alors $i$ est le reste de $i'$ mod $e$.
Inversement, notons $i$ le reste de $i'$ mod $e$ et soit 
$t\in\{0,\dots,k-1\}$.
Compte tenu du lemme \ref{NASR}, on a~:
\begin{equation}
\label{Pitet}
(i,t)\in\Y(i') 
\quad\Leftrightarrow\quad
\text{$\frac{i'-i}{e}$ est le reste de $\frac{ts'}{e}$ mod $\frac{e'}{e}$}.
\end{equation}
D'après le lemme \ref{NASR} et \eqref{DEFOMEGA}, le quotient $e'e^{-1}$ 
est le plus grand diviseur de $k$ premier à $\ell$. 
Par conséquent \eqref{Pitet} admet 
$k'=kee'^{-1}$ solutions $t\in\{0,\dots,k-1\}$.
Pour $i'\in\{0,\dots,e'-1\}$, on a~:
\begin{equation}
\label{Oswald}
m_{i'} = k'n_{i}
\end{equation}
où $i\in\{0,\dots,e-1\}$ est le reste de $i'$ modulo $e$.

\begin{rema}
L'entier $e'e^{-1}$, qui est noté $\omega(\tau)$ au paragraphe \ref{P32}, 
est le plus petit entier $t\>1$ tel que $\tau\nu_{\tau}^t$ soit isomorphe à
$\tau$, et $k'$ est une puissance de $\ell$. 
\end{rema}

Le cas supercuspidal permet de conclure que les familles $\a'$ et $\g'$
sont égales, ce dont on déduit grâce à \eqref{Oswald}
que les familles $\a$ et $\g$ sont égales.
\end{proof}

Notons $\RA(\G_{mn},\cuspi,\nu)$ le sous-groupe de 
$\RA(\G_{mn},\flb)$ engendré par les sous-quotients irréducti\-bles 
des induites $\I(\rho,\a)$ où $\a$ décrit toutes les familles d'entiers 
$\>0$ de somme $n$. 
Notamment, les représentations $\Delta$ et $\Sigma$ appartiennent à ce sous-groupe.

Nous avons prouvé que, pour toute représentation irréductible $\pi$ 
dans $\RA(\G_{mn},\cuspi,\nu)$, il y a une unique famille~:
\begin{equation*}
\a=\a(\pi)=(n_0,\dots,n_{e-1})
\end{equation*} 
d'entiers $\>0$ de somme $n$ telle que $\pi$ soit un sous-quotient de 
$\I(\cuspi,\a)$ (voir \eqref{upthere}). 
La représen\-ta\-tion $\pi$ 
s'écrit de façon unique sous la for\-me d'une induite~:
\begin{equation*}
\ip_{m\cdot\a}(\pi_\a),
\quad
\pi_\a=\pi_0\otimes\dots\otimes\pi_{e-1}\in
\XA(\G_{mn_0}\times\dots\times\G_{mn_{e-1}},\R),
\end{equation*}
où $m\cdot\a$ est la famille $(mn_1,\dots,mn_{e-1})$ et où 
$\pi_i$ est, pour cha\-que entier $i\in\{0,\dots,e-1\}$,
un sous-quotient ir\-ré\-duc\-ti\-ble de l'induite $(\cuspi\nu^{i})^{\times n_i}$.

\begin{lemm}
\label{STEP1}
Soit $\pi$ une représentation irréductible dans $\RA(\G_{mn},\cuspi,\nu)$
et posons $\a=\a(\pi)$. 
Pour tout élément $\Pi\in\RA(\G_{mn},\cuspi,\nu)$,
la multiplicité de $\pi$ dans $\Pi$ est égale à la multiplicité de 
$\pi_\a$ dans $\rp_{m\cdot\a}(\Pi)$.
\end{lemm}

\begin{proof}
En effet, si l'on écrit~:
\begin{equation*}
\Pi = \sum\limits_{\tau} k(\tau)\cdot\tau \in\Gg(\G_{mn},\flb)
\end{equation*}
où $\tau$ décrit les classes de $\flb$-représentations irréductibles de 
$\G_{mn}$ et où $k(\tau)\in\ZZ$, alors~:
\begin{equation*}
\rp_{m\cdot\a}(\Pi) = \sum\limits_{\tau} k(\tau)\cdot\rp_{m\cdot\a}(\tau).
\end{equation*}
D'après le lemme géométrique \cite{MSc} \S2.4.3, 
le facteur irréductible 
$\pi_\a$ apparaît avec multi\-plicité $1$ dans $\rp_{m\cdot\a}(\pi)$.

Soit $\tau$ telle que $\pi_\a$ apparaisse dans $\rp_{m\cdot\a}(\tau)$.
Appliquant à nouveau le lemme géométrique, 
pour que la repré\-sen\-ta\-tion
$\rp_{m\cdot\a}(\tau)$ contienne un terme homogène, 
\ie un terme irréductible de la forme 
$\k_0\otimes\dots\otimes\k_{e-1}$ où $\k_i$ est pour chaque $i$
un sous-quotient irréductible de $\cuspi^{\times n_i}$, 
il faut et il suffit que $\a(\tau)$ soit égal à $\a(\pi)$. 
Ce terme homogène est alors unique, égal à $\pi_\a$.
On a donc $\tau_\a=\pi_\a$ ce qui, en induisant, donne $\tau=\pi$.
\end{proof}

Grâce à la propriété d'unicité du lemme \ref{UniAlpha},
tout $\Pi\in\RA(\G_{mn},\cuspi,\nu)$ peut être décomposé 
sous la forme~:
\begin{equation}
\label{SirThomasMore}
\Pi = \Pi_{0} + \dots + \Pi_{e-1} + \Pi_{{\rm mixte}} 
\end{equation}
où $\Pi_i$ contient les termes de $\Pi$ qui sont des 
sous-quotients irré\-duc\-ti\-bles de $(\cuspi\nu^i)^{\times n}$ et où 
$\Pi_{{\rm mixte}}$ contient les termes $\pi$ de $\Pi$ pour 
lesquels $\a(\pi)$ contient au moins deux valeurs non nulles
(elles sont donc alors toutes strictement inférieures à $n$). 

\begin{lemm}
\label{STEP2}
Soit $\cuspit$ une $\qlb$-représentation irréductible 
cuspidale entiè\-re de $\G$. 
On a~:
\begin{equation*}
\label{Dmixte}
\Delta_{{\rm mixte}}(\cuspit,n)=\Sigma_{{\rm mixte}}(\cuspit,n).
\end{equation*}
\end{lemm}

\begin{proof}
Soit $\a=(n_0,\dots,n_{e-1})$ une famille de $e$ entiers $\>0$ de somme $n$ 
dont deux au moins sont non nuls (on suppose bien sûr que $e\>2$ sinon il 
n'y a rien à prouver). 
Nous allons prouver que $\rp_{m\cdot\a}(\Delta)=\rp_{m\cdot\a}(\Sigma)$. 
Pour toute représentation irréductible $\pi$ dans 
$\RA(\G_{mn},\cuspi,\nu)$, le lemme \ref{STEP1} entraînera alors~:
\begin{equation*}
[\Delta:\pi] = [\rp_{m\cdot\a}(\Delta):\pi_\a] = 
[\rp_{m\cdot\a}(\Sigma):\pi_\a] = [\Sigma:\pi]
\end{equation*}
ce qui mettra fin à la preuve du lemme \ref{STEP2}.

Grâce à la propriété de transitivité des foncteurs de Jacquet, il suffit de
prouver cette égalité dans le cas où $\a$ n'a que deux valeurs non nulles
$k$ et $n-k$ avec $k\in\{1,\dots,n-1\}$.
On a alors~: 
\begin{equation}
\label{effort}
\rp_{m\cdot\a}(\Delta) 
= \r_\ell(\rp_{m\cdot\a}(\Z(\cuspit,n))) 
= \r_\ell(\Z(\cuspit,k)) \otimes \r_\ell(\Z(\cuspit\nu_{\cuspit}^{k},n-k)).
\end{equation}
Pour calculer ceci, on prouve le lemme suivant. 

\begin{lemm}
\label{Mamounette}
Soit $\widetilde{\omega}$ un $\qlb$-caractère non ramifié de $\G_m$ 
relevant $\nu^i$ pour un entier $i\in\ZZ$.
Alors $\rt\widetilde{\omega}$ est congrue à $\rt$ mod $\ell$.
\end{lemm}

\begin{proof}
Calculant la réduction mod $\ell$ de $\cuspit\widetilde{\omega}$ au moyen
de la formule \eqref{redcusp}, on trouve~: 
\begin{equation*}
\r_\ell(\cuspit\widetilde{\omega}) 
= (\cuspi+\cuspi\nu+\dots+\cuspi\nu^{a-1})\cdot\nu^{i}.
\end{equation*}
Ayant supposé que $a>1$, la proposition \ref{propa} implique que les 
représentations tordues $\cuspi\nu^{k}$, $k\in\ZZ$, apparaissent toutes, 
à isomorphisme près, parmi $\cuspi,\cuspi\nu,\dots,\cuspi\nu^{a-1}$.
\end{proof}

Appliquant le lemme \ref{Mamounette} et le corollaire \ref{coronr}, 
on déduit de \eqref{effort} que~:
\begin{equation*}
\rp_{m\cdot\a}(\Delta) 
= \Delta(\rt,k)\otimes\Delta(\rt,n-k)
= \Sigma(\rt,k)\otimes\Sigma(\rt,n-k),
\end{equation*}
la dernière égalité étant obtenue par hypothèse de récurrence. 

Par ailleurs, d'après le lem\-me géo\-métrique, on a~:
\begin{equation*}
\rp_{m\cdot\a}(\Sigma) = 
\sum\limits_{\g} \sum\limits_{\delta} 
\Z(\rho,\delta) \otimes \Z(\rho,\g-\delta) 
\end{equation*}
où $\g$ décrit les familles $(r_0,\dots,r_{a-1})$ d'entiers $\>0$ de somme $r$ 
et $\delta=(s_0,\dots,s_{a-1})$ les familles de somme $k$ telles que 
$0\<s_i\<r_i$ pour tout $i$.
Enfin, $\g-\delta$ désigne la famille des entiers $r_i-s_i$. 
Ceci est égal à $\Sigma(\rt,k)\otimes\Sigma(\rt,n-k)$,
ce dont on déduit le résultat annoncé. 
\end{proof}

Pour traiter les autres termes irréductibles, 
\ie les $\pi$ tels que $\a(\pi)$ n'a qu'une valeur non nulle
(donc égale à $n$), on commence par le lemme suivant. 

\begin{lemm}
\label{STEP3}
Pour toute $\flb$-représentation irréductible $\pi$ de $\G_{mn}$ 
et tout $i\in\ZZ$, les repré\-sen\-ta\-tions $\pi$ et $\pi\nu^i$ ont la 
même multiplicité dans $\Delta$ et dans $\Sigma$.
\end{lemm}

\begin{proof}
Il suffit de voir que $\Delta$ et $\Sigma$ sont invariants par torsion par 
le $\flb$-caractère $\nu$. 
Fixant un carac\-tère non ramifié $\widetilde{\omega}$ de $\G_m$ 
relevant $\nu$ et notant $|\ |_\F$ la valeur absolue sur $\mult\F$, on a 
l'égalité~:
\begin{equation*}
\label{QTESUFFPOUR}
\Delta\cdot|\ |_\F = \Delta(\cuspit\widetilde{\omega},n).
\end{equation*}
Le lemme \ref{Mamounette} et le corollaire \ref{coronr} impliquent 
que ceci est égal à $\Delta$.
Pour $\Sigma$, il suffit d'écrire~:
\begin{equation*}
\Sigma\cdot|\ |_\F = \sum\limits_{\g} 
\Z(\cuspi\nu,r_0)\times\Z(\cuspi\nu^2,r_1)\times\dots\times\Z(\cuspi\nu^{a},r_{a-1})
\end{equation*}
qui est bien égal à $\Sigma$ car la transformation 
$(r_0,r_1,\dots,r_{a-1})\mapsto(r_{a-1},r_0,\dots,r_{a-2})$ est bijec\-ti\-ve sur l'ensemble des 
familles de $a$ entiers $\>0$ de somme $n$.
\end{proof}

On déduit du lemme \ref{STEP3}, avec la notation introduite en 
\eqref{SirThomasMore}, que~:
\begin{equation}
\label{Dhom}
\Delta_i(\cuspit,n)=\Delta_0(\cuspit,n)\nu^i,
\quad
\Sigma_i(\cuspit,n)=\Sigma_0(\cuspit,n)\nu^i,
\quad
i\in\{0,\dots,e-1\}.
\end{equation}
Il ne nous reste donc plus qu'à prouver que $\Delta_0(\cuspit,n)$ 
et $\Sigma_0(\cuspit,n)$
sont égaux.
Soit $\k\otimes\s$ un type simple contenu dans $\rho$, 
et soit ${\KM}$ le morphisme \eqref{DEFKMR} associé à $\k$. 
Nous allons montrer que $\Delta$ et 
$\Sigma$ ont la même image par $\KM$.

\begin{lemm}
On a $\KM(\Delta)=\KM(\Sigma)$.
\end{lemm}

\begin{proof}
On peut supposer que $\rt$ contient un type simple de la forme 
$\kt\otimes\st$ où $\kt$ est une $\b$-extension relevant $\k$
et $\st$ une représentation cuspidale de $\GB$ relevant $\s$.
Si l'on note $\widetilde{\KM}$ le morphisme associé à $\kt$, alors 
on a la relation~:
\begin{equation*}
\r_\ell \circ \widetilde{\KM} = \KM \circ \r_\ell.
\end{equation*}
Notons $\phi$ l'automorphisme de Frobenius de $\Gal(\kd/\ke)$
et posons $\st_i=\st^{\phi^{i-1}}$ pour $i\in\{1,\dots,b(\rt)\}$.
Notant $\s_i$ la réduction mod $\ell$ de $\st_i$, on a $\s_i=\s_j$ 
si et seulement si $i$ et $j$ sont congrus mod $b(\rho)$.
Compte tenu de la relation entre les 
invariants $b$ et $s$ d'une part (voir le \S\ref{Racaille}) et la 
proposition \ref{propa}(3) d'autre part, 
le quotient de $b(\rt)$ par $b(\rho)$ est égal à $a$.
D'après \cite{MSf} Lemme 5.9 (voir aussi la preuve de \cite{MSi} Lemme 4.7), 
la réduction mod $\ell$ de $z(\st_i,r)$ est égale à $z(\s_i,r)$ pour tout 
$r\>1$. 
D'après la proposition \ref{predeldongo}, on a donc~:
\begin{eqnarray*}
\KM(\Delta) & = & \sum\limits_{\a}
z(\cuspif_1,n_1)\times z(\cuspif_2,n_2) \times\dots\times 
z(\cuspif_{b(\rt)},n_{b(\rt)}) \\
& = & \sum\limits_{\a} \prod\limits_{i=1}^{b} 
z(\s_i,n_i) \times z(\s_i,n_{i+b}) \times\dots\times z(\s_i,n_{i+(a-1)b})
\end{eqnarray*}
où $b=b(\rho)$ et $\a$ décrit l'ensemble des familles 
$(n_1,n_2,\dots,n_{b(\rt)})$ d'entiers naturels de somme $n$.
Par ailleurs, d'après (\ref{P81}.2) et (\ref{Racaille}.1), on a~:
\begin{eqnarray*}
\KM(\Sigma) &=& \sum\limits
\KM(\Z(\cuspi,r_0))\times\KM(\Z(\cuspi,r_1))
\times\dots\times\KM(\Z(\cuspi,r_{a-1})) \\
&=& \sum\limits \prod\limits_{j=0}^{a-1} 
\sum\limits_{\delta_j}
z(\cuspif_1,m_{1,j})\times z(\cuspif_2,m_{2,j}) \times\dots\times 
z(\cuspif_{b},m_{b,j})
\end{eqnarray*}
où la somme porte sur 
les familles $(r_0,\dots,r_{a-1})$ d'entiers naturels de somme $n$,
et où $\delta_j$ décrit les familles 
$(m_{1,j},m_{2,j},\dots,m_{b,j})$ d'entiers $\>0$ de somme $r_{j}$. 
Utilisant la notation \eqref{ALBA}, on a~:
\begin{equation*}
\KM(\Sigma) = \sum\limits 
z(\delta_0)\times z(\delta_1) \times \dots \times z(\delta_{a-1})
\end{equation*}
où $\delta_0,\dots,\delta_{a-1}$ décrivent les familles de $b$ entiers naturels de 
somme totale $n$.
Ecrivant $\delta_j$ sous la forme $(m_{1,j},n_{2,j},\dots,m_{b,j})$
(de sorte que la somme de tous les $m_{i,j}$, $i\in\{1,\dots,b\}$, 
$j\in\{0,\dots,a-1\}$ est égale à $n$), on note $\a$ la famille 
$(n_1,\dots,n_{b(\rt)})$ définie par~:
\begin{equation*}
n_{i+bj} = m_{i,j},
\quad
i\in\{1,\dots,b\},\ 
j\in\{0,\dots,a-1\}.
\end{equation*}
L'opération $(\delta_0,\dots,\delta_{a-1})\mapsto\a$ est injective, 
et son image donne toutes les familles 
de $b(\rt)$ entiers naturels de somme $n$.
On en déduit le résultat voulu.
\end{proof}

Appliquant conjointement le lemme \ref{STEP2}, l'identité \eqref{Dhom} et 
la propriété (\ref{Racaille}.2), on en déduit~:
\begin{equation*}
\label{EGA0}
\KM(\Delta^{}_0(\cuspit,n))=\KM(\Sigma_0(\cuspit,n)).
\end{equation*}
Appliquant le lemme \ref{STEP5}, 
cela donne $\Delta_0(\cuspit,n)=\Sigma_0(\cuspit,n)$, pour tout entier 
$n\>2$, ce qui met fin à la preuve de la proposition \ref{ConjRedModlSpehIntro}.
Elle a pour conséquence immédiate le résultat suivant. 

\begin{prop}
\label{redZ1}
Soient $\cuspit_1$, $\cuspit_2$ deux $\qlb$-représentations irréductibles cuspidales 
entiè\-res de $\G$, et soit un entier $n\>1$.
Les représentations irréductibles 
$\Z(\cuspit_1,n)$ et $\Z(\cuspit_2,n)$ sont congruentes 
si et seu\-le\-ment si $\cuspit_1$ et $\cuspit_2$ sont congruentes.
\end{prop}

\begin{proof}
Pour l'implication directe,
il suffit d'appliquer le foncteur de Jacquet $\rp_{\a}$ avec  
$\a=(m,\dots,m)$ et d'utiliser la formule~:
\begin{equation*}
\rp_\a(\Z(\cuspit,n)) = 
\cuspit\otimes\cuspit\nu_{\cuspit}^{}\otimes\dots\otimes
\cuspit\nu_{\cuspit}^{n-1}
\end{equation*}
et le fait que la réduction mod $\ell$ commute aux foncteurs de Jacquet 
(voir \cite{MSc} \S 1.2.4).
\end{proof}

\subsection{Preuve da la proposition \ref{Zcong} dans le cas général}
\label{CasGenZcong}

Soit $\cuspit$ une $\qlb$-représentation irréductible cus\-pi\-dale entière
de degré $m\>1$. 
Etant donné un multisegment for\-mel $\upmu$, écrivons~:
\begin{equation*}
\Z(\upmu\boxtimes\cuspit) = 
\sum\limits_{\upnu} 
\textsf{n}(\upmu,\upnu,\cuspit)\cdot\I(\upnu\boxtimes\cuspit) 
\end{equation*}
où l'on note~:
\begin{equation*}
\I(\upnu\boxtimes\cuspit) = 
\Z([a_1,n_1]\boxtimes\cuspit)\times\dots\times\Z([a_r,n_r]\boxtimes\cuspit)
\end{equation*}
pour tout multisegment formel $\upnu=[a_1,n_1]+\dots+[a_r,n_r]$ et où les 
$\textsf{n}(\upmu,\upnu,\cuspit)$ sont dans $\ZZ$. 
Avec les notations du paragraphe \ref{PARA21}, on a donc~:
\begin{equation*}
\sum\limits_{\uplambda} 
\textsf{m}(\upmu,\uplambda,\cuspit)\textsf{n}(\uplambda,\upnu,\cuspit)=
\left\{
\begin{array}{ll}
1 & \text{si $\upmu=\upnu$,} \\
0 & \text{sinon,}
\end{array}
\right.
\end{equation*}
$\uplambda$ décrivant les multisegments formels.
Réduisant modulo $\ell$, on obtient~:
\begin{equation*}
\r_\ell(\Z(\upmu\boxtimes\cuspit)) 
=\sum\limits_{\upnu} 
\textsf{n}(\upmu,\upnu,\cuspit)\cdot\r_\ell(\I(\upnu\boxtimes\cuspit)).
\end{equation*}
Soient $\cuspit_1$ et $\cuspit_2$ des $\qlb$-représentations 
irréductibles cuspidales entières con\-gruentes. 
D'après la pro\-po\-sition \ref{redZ1}, et comme $\r_\ell$ commute à l'induction 
parabolique, on a~:
\begin{equation*}
\r_\ell(\I(\upnu\boxtimes\cuspit_1)) = \r_\ell(\I(\upnu\boxtimes\cuspit_2)). 
\end{equation*} 
Par ailleurs, d'après \cite{MSi} Proposition 4.15, on a l'égalité 
$\textsf{n}(\upmu,\upnu,\cuspit_1)=\textsf{n}(\upmu,\upnu,\cuspit_2)$. 
Ceci met fin à la preuve de la proposition \ref{Zcong}.

\subsection{Preuve du théorème \ref{CongSpehIntro}}
\label{P7a}

Soit $\tp$ une représentation de Speh entière de $\G_m$,
qu'on écrit $\Z(\cuspit,r)$ avec $r$ un diviseur de $m$ et $\cuspit$ une 
représentation irréductible cuspidale entière de $\G_{mr^{-1}}$.
D'après la définition \ref{BenOuiMonGars},
on a $c(\tp)=c(\cuspit)$.
D'après \eqref{grossepute} et la proposition 
\ref{redZ1}, on a $t(\tp)=t(\cuspit)$.
Enfin, on a $w(\tp)=w(\cuspit)$ par définition.
Le résultat se déduit alors de la proposition \ref{CongCusp}.

\subsection{Preuve du théorème \ref{ComptagePIIntro}}
\label{P7b}

D'après le paragraphe \ref{PARA21}, l'application~:
\begin{equation}
\label{BIFfin}
(\cuspit,r) \mapsto \Z(\cuspit,r)
\end{equation}
est une bijection entre l'ensemble des paires formées d'un diviseur $r$ de 
$m$ et d'une classe d'iso\-morphisme de $\qlb$-repré\-sentation irréductible
cuspidale $\cuspit$ de $\G_{mr^{-1}}$, et l'ensemble $\Zz(\G_m,\qlb)$. 

Fixant un entier $w\>1$ et un nombre rationnel $j\>0$, 
l'application \eqref{BIFfin} induit une bijection entre~:
\begin{enumerate}
\item
classes d'iso\-morphisme de 
$\qlb$-repré\-sentations irréductibles cuspidales $\cuspit$
de degré divisant $m$, de niveau nor\-malisé inférieur ou égal à $j$,
et telles que $w(\cuspit)=w$~;
\item
et l'ensemble des $\tp\in\Zz(\G_m,\qlb)$ de niveau nor\-malisé inférieur ou 
égal à $j$. 
\end{enumerate}

Enfin, la proposition \ref{redZ1} et la propriété (\ref{P81}.2)
impliquent que, en réduisant mod $\ell$, on obtient une bijection~:
\begin{equation*}
\r_\ell(\langle\cuspit\rangle) \mapsto \r_\ell(\langle\Z(\cuspit,r)\rangle)
\end{equation*}
entre les ensembles finis $\Aa_\ell(\D,w,j)$ et $\Ee_\ell(\G_m,w,j)$,
ce qui prouve le théorème \ref{ComptagePIIntro}.

\section{La correspondance de Jacquet-Langlands dans le 
cas complexe} 
\label{JLcomplexe}

Dans cette section et la suivante, on fixe un entier $n\>1$
et une $\F$-algèbre à division cen\-trale de 
de\-gré réduit $n$ dont on note $\A$ le groupe multiplicatif. 
On désigne par $\CC$ le corps des nombres comple\-xes.

Soit $\G=\GL_m(\D)$ une forme in\-té\-rieure 
de $\H=\GL_n(\F)$. 
Traditionnellement, la corres\-pondan\-ce de Jacquet-Langlands locale 
est une bi\-jec\-tion de la série discrète de $\G$ vers celle de $\H$. 
Pour nous, il est préférable de choisir $\A$ plutôt que $\H$ comme 
groupe de référence. 

\subsection{}
\label{DEFLJ}

Soit $\Dd(\G,\CC)$ l'ensemble des classes 
de re\-pré\-sentations lisses com\-plexes, 
irréductibles et es\-sen\-tiellement de carré intégrable de $\G$. 
La correspon\-dan\-ce de Jacquet-Langlands locale 
(\cite{Rog,DKV,BaduJL}) est une bi\-jec\-tion~:
\begin{equation}
\label{JL}
\boldsymbol{j} : \Dd(\G,\CC)\to\XA(\A,\CC)
\end{equation}
carac\-térisée par une identité de caractères sur les 
classes de conjugaison elliptiques et régu\-liè\-res.
Elle préserve le caractère central, le degré formel \cite{DKV,Rog,BHL}
et le niveau normalisé {\cite{ABPS}}. 
Comme elle est com\-patible à la torsion par un 
carac\-tère de $\mult\F$, elle préserve aussi le nombre de torsion. 

Si $r$ est un diviseur de $m$ et $\cuspi$ une représentation irréductible 
cuspidale de $\GL_{mr^{-1}}(\D)$, l'induite parabolique~:
\begin{equation*}
\label{MariePierre}
\cuspi\times\cuspi\nu_\cuspi^{}\times\dots\times\cuspi\nu_{\cuspi}^{r-1}
\end{equation*}
admet un unique quotient irréductible, noté $\L(\cuspi,r)$~;
celui-ci est essen\-tiellement de carré intégra\-ble,
et tout élément de $\Dd(\G,\CC)$ s'obtient de cette façon 
(\cite{BHLS} \S2.2).

Soit $r\>1$, soient des entiers $m_1,\dots,m_r\>1$ 
de somme $m$ et, pour chaque $i\in\{1,\dots,r\}$, 
soit $\pi_i$ une représentation irréductible essentiellement de 
carré intégrable de $\GL_{m_i}(\D)$. 
Les induites para\-bo\-li\-ques de la forme~: 
\begin{equation}
\label{RSTD}
\pi_1\times\dots\times\pi_r \in \Gg(\G,\CC)
\end{equation}
forment une base du groupe abélien libre $\Gg(\G,\CC)$, 
appelée base {standard}. 
Il y a donc un unique morphisme surjectif de groupes abéliens~:
\begin{equation}
\label{JLmor}
\JL : \Gg(\G,\CC)\to\Gg(\A,\CC)
\end{equation} 
qui soit nul sur l'ensemble des induites parabo\-li\-ques de la forme 
\eqref{RSTD} avec $r\>2$, et qui coïncide avec la bijection $\boldsymbol{j}$ sur 
$\Dd(\G,\CC)$. 

\subsection{}

Etant donnée une représentation irréductible 
$\pi\in\XA(\G,\CC)$, on note $\pi^*$ sa duale de Zelevinski. 
L'image de $\Dd(\G,\CC)$ par l'in\-vo\-lution de Zelevinski est l'ensemble 
$\Zz(\G,\CC)$ des classes de représen\-ta\-tions de Speh de $\G$.
Pour le lemme suivant, voir aussi \cite{BaduJIMJ} \S3.5.

\begin{lemm}
Soit une représentation $\pi\in\Dd(\G,\CC)$,
qu'on écrit $\L(\cuspi,r)$
avec $r$ un divi\-seur de $m$ et $\cuspi$ une 
re\-pré\-sentation cuspidale de $\GL_{mr^{-1}}(\D)$.
Alors~:
\begin{equation*}
\label{Melmotte}
\JL\left(\pi^*\right)=(-1)^{r-1}\cdot\boldsymbol{j}(\pi).
\end{equation*}
\end{lemm}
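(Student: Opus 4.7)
Le plan consiste � montrer que la repr�sentation virtuelle $\eta := [\Z(\cuspi,r)] - (-1)^{r-1}[\L(\cuspi,r)]$ appartient au noyau de $\JL$, ce qui entra�ne aussit�t \eqref{Melmotte}. Par construction, $\ker(\JL)$ est le sous-$\ZZ$-module engendr� par les �l�ments de la base standard \eqref{RSTD} avec $r\>2$, et il admet une caract�risation commode en termes de caract�res~: il co�ncide avec l'ensemble des �l�ments de $\Gg(\G,\CC)$ dont le caract�re s'annule sur le lieu elliptique r�gulier de $\G$. Une inclusion d�coule de la formule usuelle pour le caract�re d'une induite parabolique propre, qui s'annule sur les �l�ments non conjugu�s dans le Levi~; l'autre r�sulte de l'ind�pendance lin�aire des caract�res elliptiques des s�ries discr�tes (orthogonalit� de Kazhdan, valable pour les formes int�rieures de $\GL_n$).

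Il suffit donc d'�tablir l'identit� de caract�res
\begin{equation*}
\chi_{\Z(\cuspi,r)}(g) = (-1)^{r-1}\chi_{\L(\cuspi,r)}(g)
\end{equation*}
pour tout $g\in\G$ elliptique r�gulier. On l'obtiendrait via l'involution d'Aubert-Zelevinski $D_\G = \sum_P (-1)^{\dim(A_P/A_\G)}\ip_P\rp_P$ sur $\Gg(\G,\CC)$, au moyen de deux propri�t�s~: premi�rement, $D_\G$ pr�serve les caract�res sur les �l�ments elliptiques r�guliers, car dans la somme altern�e d�finissant $D_\G$ les termes $\ip_P\rp_P(\tau)$ pour $P$ propre, �tant induits � partir d'un Levi propre de $\G$, ont un caract�re qui s'annule en un tel $g$~; deuxi�mement, pour l'irr�ductible $\pi = \L(\cuspi,r)$, on a $D_\G[\pi] = (-1)^{r-1}[\pi^*] = (-1)^{r-1}[\Z(\cuspi,r)]$. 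Mises ensemble, ces deux propri�t�s entra�nent $\chi_{\L(\cuspi,r)}(g) = \chi_{D_\G[\pi]}(g) = (-1)^{r-1}\chi_{\Z(\cuspi,r)}(g)$, qui est l'identit� voulue.

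Le calcul du signe $D_\G[\L(\cuspi,r)] = (-1)^{r-1}[\Z(\cuspi,r)]$ constitue la principale difficult�~; on le d�montrerait par r�currence sur $r$. Le cas $r=1$ est trivial~: $\pi$ est alors cuspidale, tous les modules de Jacquet propres s'annulent, et $D_\G$ agit comme l'identit�. Pour l'�tape de r�currence, la formule du module de Jacquet pour $\L(\cuspi,r)$ (analogue de (\ref{P12}.3) pour le quotient de Langlands), combin�e avec la d�composition explicite des sous-quotients de l'induite \eqref{MariePierre}, ram�ne le calcul de chaque $\ip_P\rp_P\L(\cuspi,r)$ aux images par $D$ d'induites en degr� inf�rieur, et une identit� combinatoire pour les sommes altern�es sur les compositions de $r$ fournit alors le signe $(-1)^{r-1}$. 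Une fois ceci �tabli, $\eta$ a un caract�re elliptique nul, donc appartient � $\ker(\JL)$, et l'application de $\JL$ fournit $\JL(\pi^*) = (-1)^{r-1}\JL([\L(\cuspi,r)]) = (-1)^{r-1}\jl(\pi)$.
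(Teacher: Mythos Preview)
Your argument is correct, but it takes a substantially longer route than the paper's, while resting on the same two ingredients.

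The paper's proof is essentially three lines. It invokes the standard alternating-sum expression for the Zelevinski dual in the Grothendieck group,
\[
\pi^* = \sum_{\a} (-1)^{r-n(\a)}\,\ip_\a\circ\rp_\a(\pi),
\]
where $\a$ ranges over compositions of $r$ and $n(\a)$ is the number of parts, and then applies $\JL$ directly. By the very \emph{definition} of $\JL$ on the standard basis, every term with $n(\a)\>2$ is a proper parabolic induction of discrete series and is killed; only the term $\a=(r)$ survives, yielding $(-1)^{r-1}\jl(\pi)$.

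Your approach reproduces both ingredients but wraps them in extra machinery. Your step~2 (proper inductions have vanishing elliptic character, so $D_\G$ preserves elliptic characters) is exactly the observation that $\JL$ kills the terms with $n(\a)\>2$, rephrased in the language of characters; and your step~3 (the sign identity $D_\G[\pi]=(-1)^{r-1}[\pi^*]$, which you call the main difficulty and propose to prove by induction) is precisely the alternating-sum formula that the paper takes as known. The detour through Kazhdan orthogonality to characterize $\ker(\JL)$ as the elements with vanishing elliptic character is therefore unnecessary: since $\JL$ is \emph{defined} to annihilate proper standard inductions, one may apply it term by term to the alternating sum without any appeal to character theory, and the sign formula need not be re-proved.

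In short: your proof works, but once you have the Aubert--Zelevinski alternating sum (which you are implicitly using anyway), the result is immediate from the definition of $\JL$, and neither the character-theoretic description of $\ker(\JL)$ nor the inductive computation of the sign is required.
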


\begin{proof}
Dans le groupe de Grothendieck de $\G$, on a~:
\begin{equation}
\label{BMX}
\pi^* = \sum\limits_{\a} (-1)^{r-n(\a)} \cdot \ip_\a\circ\rp_\a(\pi)
\end{equation}
où $\a$ décrit les familles d'entiers $\>1$ de somme $r$ et où $n(\a)$ est le 
nombre de termes de $\a$. 
En appliquant $\JL$, le seul terme de la somme qui contribue est celui 
correspondant à $\a=(r)$.
On en déduit le résultat voulu.
\end{proof}

Pour toute représentation $\pi\in\Zz(\G,\CC)$, 
qu'on écrit $\Z(\cuspi,r)$
où $r$ est un divi\-seur de $m$ et $\cuspi$ une 
re\-pré\-sentation cuspidale de $\GL_{mr^{-1}}(\D)$,
on pose $\epsilon(\pi) = (-1)^{r-1}$.
Ainsi, l'application~:
\begin{equation}
\label{poncelet}
\boldsymbol{j}^* : \pi \mapsto \boldsymbol{j}(\pi^*) = \epsilon(\pi)\cdot\JL(\pi) 
\end{equation}
est une bijection de $\Zz(\G,\CC)$ sur $\XA(\A,\CC)$.

\section{Le théorème principal}
\label{SEC3THPR}

On conserve les notations de la section \ref{JLcomplexe}. 
On fixe un isomorphisme de corps $\iota:\CC\simeq\qlb$.
On note $\TJL_\ell$ le morphisme de $\Gg(\G,\qlb)$ dans $\Gg(\A,\qlb)$
et $\tlj_\ell$ la bijection de $\Zz(\G,\qlb)$ dans $\XA(\A,\qlb)$ obtenus 
à partir de \eqref{JLmor} et \eqref{poncelet} 
par change\-ment du corps des coefficients. 

L'image d'une représentation de Speh $\ell$-adique $\tp$ de $\G$ par 
$\tlj_\ell$ s'appelle le \textit{transfert} de $\tp$ à $\A$.

\begin{rema}
Le morphisme de groupes $\TJL_\ell$ et la bijection
$\tlj_\ell$ peuvent aussi être définis de la même façon que 
$\JL$ et $\boldsymbol{j}^*$ l'ont été à partir de $\jl$,
au moyen de la correspondance~: 
\begin{equation*}
\label{JLtilde}
\tjl_\ell : \Dd(\G,\qlb)\to\XA(\A,\qlb)
\end{equation*}
obtenue à partir de \eqref{JL} par changement du corps des coefficients.
La correspondance $\tjl_\ell$ 
ne dé\-pend pas du choix de l'isomorphisme de corps 
$\iota:\CC\simeq\qlb$ car, 
pour tout automorphisme de corps $\a$ de $\CC$ et toute représentation
irréductible complexe $\pi$ de $\G$, le caractère de Harish-Chandra 
$\uptheta_{\pi^\a}$ de la représentation tordue $\pi^\a$ est égal à 
$\a\circ\uptheta_\pi$, où $\uptheta_\pi$ est le caractère de Harish-Chandra de $\pi$, 
et le signe $(-1)^{m-1}$ apparaissant dans la correspondance est invariant par 
$\a$.
Par conséquent, ni $\TJL_\ell$ ni $\tlj_\ell$ ne
dépendent du choix de l'isomorphisme de corps $\iota$. 
\end{rema}

\subsection{}

Dans \cite{Datj} 
Dat définit le caractère de Brauer ${\widetilde\uptheta}_\pi$ 
d'une $\flb$-représentation lisse ir\-ré\-duc\-tible $\pi$ 
d'un groupe réductif $p$-adique. 
Rappelons-en les principales propriétés pour le groupe $\G$.

Notons $\G^{{\rm crs}}$ 
le sous-ensemble ouvert de $\G$ formé des élément semi-simples 
réguliers et compacts modulo le centre de $\G$,
et notons $\G^{{\rm crs}}_{\ell'}$ le sous-ensemble de $\G^{{\rm crs}}$ formé 
des éléments d'ordre premier 
à $\ell$ modulo le centre (voir \cite{Datj} \S2.1 pour les définitions précises).

A toute représentation lisse ir\-ré\-duc\-tible $\pi\in\XA(\G,\flb)$ 
on associe une fonc\-tion~:
\begin{equation*}
{\widetilde\uptheta}_\pi\in\Cc^{\infty}(\G^{{\rm crs}}_{\ell'},\zlb)^\G
\end{equation*}
ap\-pe\-lée son caractère de Brauer, $\Cc^{\infty}(\G^{{\rm crs}}_{\ell'},\zlb)^\G$ 
étant l'espace des fonctions localement cons\-tan\-tes sur $\G^{{\rm crs}}_{\ell'}$, 
à valeurs dans $\zlb$ et $\G$-invariantes par conjugaison. 

L'application $\pi\mapsto{\widetilde\uptheta}_\pi$ 
induit un morphisme de groupes rendant
commutatif le diagramme~: 
\begin{equation*}
\begin{CD}
\Gg(\G,\qlb)^{{\rm e}} @>{\uptheta}>>
\Cc^{\infty}(\G^{{\rm crs}},\zlb)^\G\\
@V{\r_\ell}VV @VV {|\G^{{\rm crs}}_{\ell'}} V \\
\Gg(\G,\flb)@>>{{\widetilde\uptheta}}>\Cc^{\infty}(\G^{{\rm crs}}_{\ell'},\zlb)^\G\\
\end{CD}
\end{equation*}
où $\Gg(\G,\qlb)^{{\rm e}}$ désigne le sous-groupe de $\Gg(\G,\qlb)$ formé 
des représentations entières, 
où $\uptheta$ désigne le caractère de Harish-Chandra
et où le morphisme vertical de droite est la restric\-tion à 
$\G^{{\rm crs}}_{\ell'}$. 
Le morphisme $\r_\ell$ étant surjectif (\cite{MSc} Théorème 9.40), 
ce diagramme détermi\-ne ${\widetilde\uptheta}$ de façon unique. 

En particulier, lorsque $\G=\A$, le morphisme $\widetilde\uptheta$ est injectif 
(\cite{Datj} Proposition 2.3.1).

\begin{prop}
\label{Leighton}
Il existe un unique morphisme de groupes~:
\begin{equation}
\label{LJflb}
\JL_{\ell}:\Gg(\G,\flb)\to\Gg(\A,\flb)
\end{equation}
tel que le diagramme~:
\begin{equation*}
\begin{CD}
\Gg(\G,\qlb)^{{\rm e}} @>{\TJL_\ell}>>
\Gg(\A,\qlb)^{{\rm e}}\\
@V{\r_\ell}VV @VV {\r_\ell} V \\
\Gg(\G,\flb) @>>{\JL_\ell}> \Gg(\A,\flb)\\
\end{CD}
\end{equation*}
soit commutatif.
\end{prop}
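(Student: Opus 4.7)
The plan is to exploit two complementary facts recalled in the text: the surjectivity of $\r_\ell : \Gg(\G,\qlb)^{\rm e}\to\Gg(\G,\flb)$ and the injectivity of $\widetilde\uptheta$ on $\Gg(\MA,\flb)$. Uniqueness is immediate, since two candidates for $\JL_\ell$ would have to agree on the image of $\r_\ell$, which is all of $\Gg(\G,\flb)$.

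For existence, the natural definition is $\JL_\ell(\r_\ell(x))=\r_\ell(\widetilde{\JL}_\ell(x))$ for $x\in\Gg(\G,\qlb)^{\rm e}$. Before checking well-definedness, one has to verify that $\widetilde{\JL}_\ell$ preserves integrality, so that the inner $\r_\ell$ on the right-hand side makes sense. Since $\MA$ is compact modulo its centre, its irreducible representations are finite-dimensional, and integrality of such a representation is detected by the $\zlb$-valuedness of its Harish-Chandra character on regular semisimple classes; the classical Jacquet-Langlands character identity then transports integrality from $\Gg(\G,\qlb)^{\rm e}$ to $\Gg(\MA,\qlb)^{\rm e}$.

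The core of the proof is the independence from the lift, which reduces to the implication: if $z\in\Gg(\G,\qlb)^{\rm e}$ satisfies $\r_\ell(z)=0$, then $\r_\ell(\widetilde{\JL}_\ell(z))=0$. By injectivity of $\widetilde\uptheta$ on $\Gg(\MA,\flb)$, it suffices to show $\widetilde\uptheta(\r_\ell(\widetilde{\JL}_\ell(z)))=0$, and by the commutative diagram defining $\widetilde\uptheta$ for $\MA$, this amounts to showing that the restriction of $\uptheta(\widetilde{\JL}_\ell(z))$ to $\MA^{{\rm crs}}_{\ell'}$ vanishes. Now $\widetilde{\JL}_\ell$ was defined precisely so that parabolic inductions from proper Levis map to $0$; since the characters of such inductions also vanish on elliptic regular classes, the Jacquet-Langlands character identity combined with the bijective transfer between $\MA^{{\rm crs}}$ and the elliptic regular classes of $\G$ identifies $\uptheta(\widetilde{\JL}_\ell(z))$ on $\MA^{{\rm crs}}$ (up to a sign depending on the degree of the discrete series component) with the pull-back of the restriction of $\uptheta(z)$ to the elliptic regular classes of $\G$. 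Restricting further to $\MA^{{\rm crs}}_{\ell'}$ and using that the transfer respects the splitting into $\ell$-regular and $\ell$-singular parts, one pulls back to elliptic regular $\ell'$-classes of $\G$, where the commutative diagram for $\G$ expresses $\uptheta(z)$ as $\widetilde\uptheta(\r_\ell(z))$, which is zero by assumption.

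The principal obstacle is checking that the Jacquet-Langlands transfer between regular semisimple classes of $\MA$ and elliptic regular classes of $\G$ is compatible with the decomposition into $\ell$-regular and $\ell$-singular parts; but this is already established in \cite{Datj} and follows from the explicit description of the transfer together with the fact that it commutes with the topological Jordan decomposition. Once this compatibility is in hand, the argument is a clean diagram chase combining the two commutative squares for $\widetilde\uptheta$ on $\G$ and on $\MA$ with the character identity that defines $\widetilde{\JL}_\ell$.
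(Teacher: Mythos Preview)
Your argument is correct and reconstructs precisely Dat's proof in \cite[(3.1.2)]{Datj}, which is what the paper invokes verbatim. One small imprecision: integrality of $\widetilde{\JL}_\ell(x)$ is more cleanly deduced from the fact that $\jl$ preserves central characters (an irreducible representation of the compact-mod-centre group $\MA$ is integral if and only if its central character is), rather than from $\zlb$-valuedness of the Harish-Chandra character on regular semisimple classes, but this does not affect the rest of the diagram chase.
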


\begin{proof}
La preuve de Dat (voir \cite{Datj} (3.1.2)) est encore valable ici. 
\end{proof}

\begin{rema}
\begin{enumerate}
\item
Lorsque $\G=\H$,
le morphisme $\JL_\ell$ est le morphisme noté ${\rm LJ}_{\flb}$ dans
\cite{Datj} Théorème 1.2.3.
\item
Soit $\A'$ le groupe multiplicatif d'une $\F$-algèbre à division de degré 
réduit $n$. 
Notons~:
\begin{equation*}
\JL'_\ell : \Gg(\G,\flb)\to\Gg(\A',\flb)
\end{equation*}
le morphisme obtenu en remplaçant $\A$ par $\A'$ dans \eqref{LJflb}, 
et notons
$\textbf{\textsf{P}}_\ell : \Gg(\A',\flb)\to\Gg(\A,\flb)$
celui obtenu en y remplaçant $\G$ par $\A'$.
Alors $\textbf{\textsf{P}}_\ell$ est bijectif, 
et $\JL_\ell^{}=\textbf{\textsf{P}}_\ell^{}\circ\JL'_\ell$.
En effet, c'est vrai sur $\CC$, donc sur $\qlb$, 
et le résultat suit par com\-pa\-tibilité à la ré\-duc\-tion mod $\ell$. 
\end{enumerate}
\end{rema}

\subsection{Preuve du théorème principal}

Pour tout entier $w\>0$, on pose~: 
\begin{equation}
\label{DefZl}
\Zz_{w}(\G,\qlb)=
\{\tp\in\Zz(\G,\qlb)\ |\ \tp \text{ est entière et $w(\tp)=w$}\}.
\end{equation}
Lorsque $w=1$, 
c'est l'ensemble des représentations irréductibles $\ell$-adiques entières 
$\ell$-\textit{super-Speh} de $\G$ au sens de la définition 
\ref{DEFlsspehintro} et de \cite{Datj}.
On note~:
\begin{equation*}
\Zz_{w}(\G,\flb) \subseteq \RA(\G,\flb)
\end{equation*}
l'image par $\r_\ell$ de l'ensemble $\Zz_{w}(\G,\qlb)$.

En général
$\Zz_{w}(\G,\flb)$ n'est pas formé de représentations irréductibles. 
Toutefois, pour $w=1$, l'en\-semble $\Zz_1(\G,\flb)$ est 
le sous-ensemble de $\Zz(\G,\flb)$ formé des représen\-ta\-tions super-Speh de
$\G$, \ie celles dont le support cuspidal est supercuspidal. 

Remarquons également que, 
dans le cas où $\G$ est le groupe $\H=\GL_n(\F)$, 
l'ensemble $\Zz_w(\H,\flb)$ est inclus dans $\Zz(\H,\flb)$ pour 
tout $w\>0$. 

\begin{theo}
\label{RogerCarbury}
Soit un entier $w\>0$.
\begin{enumerate}
\item 
Pour toute représentation $\pi\in\Zz_w(\G,\flb)$, il y a un signe
$\epsilon(\pi)\in\{-1,+1\}$ tel que~:
\begin{equation}
\label{imageausignepres}
\epsilon(\pi)\cdot\JL_\ell(\pi) 
\end{equation}
appartienne à $\Zz_w(\A,\flb)$~; 
on note $\boldsymbol{\jmath}{}_\ell^{\boldsymbol{*}}(\pi)$ la quantité 
\eqref{imageausignepres}, 
qu'on appelle le {\rm transfert} de $\pi$ à $\A$.
\item
L'application $\pi\mapsto\boldsymbol{\jmath}{}_\ell^{\boldsymbol{*}}(\pi)$ 
de $\Zz_w(\G,\flb)$ dans $\Zz_w(\A,\flb)$
est bijective.
\item 
La bijection $\tlj_{\ell}$ induit une bijection de 
$\Zz_{w}(\G,\qlb)$ dans $\Zz_{w}(\A,\qlb)$ 
et le diagramme~:
\begin{equation*}
\begin{CD}
\Zz_{w}(\G,\qlb) @>{\tlj_{\ell}}>> 
\Zz_{w}(\A,\qlb) \\ 
@V{\r_\ell}VV @VV {\r_\ell}V\\
\Zz_w(\G,\flb) @>>{\boldsymbol{\jmath}{}_\ell^{\boldsymbol{*}}}>
\Zz_w(\A,\flb) 
\end{CD}
\end{equation*}
est commutatif. 
\end{enumerate}
\end{theo}

\begin{rema}
Si $\pi\in\Zz_w(\G,\flb)$ est la réduction mod $\ell$ de 
$\tp\in\Zz_w(\G,\qlb)$, alors le signe $\epsilon(\pi)$ apparaissant dans 
\eqref{imageausignepres} est égal à $\epsilon(\tp)$.
\end{rema}

\begin{proof}
La preuve se fait par récurrence sur $w$.
Le théorème est vrai lorsque $w=0$, puisque dans ce cas tous les 
ensembles concernés sont vides. 
Fixons un entier $w\>1$ et supposons que le théorème est vrai pour tout
$w'\<w-1$. 

Supposons d'abord que l'ensemble $\Zz_{w}(\G,\qlb)$ est non vide.
Dans ce cas, le lemme \ref{wadm} montre que le plus grand diviseur de $w$ 
premier à $\ell$ divise $\ell-1$.

Soit $\tp\in\Zz_w(\G,\qlb)$ entière, et soit $\rt$ son image par 
$\tlj_\ell$.
Par hypothèse de récurrence, $\tlj_\ell$ envoie bijective\-ment la réunion des 
$\Zz_{w'}(\G,\qlb)$ pour $w'<w$ sur la réunion des 
$\Zz_{w'}(\A,\qlb)$ pour $w'<w$.
On en déduit que~:
\begin{equation}
\label{inegw}
w\<w(\rt).
\end{equation}
La bijection $\tlj_{\ell}$ étant compatible à la torsion par un caractère, on a 
l'égalité $n(\tp)=n(\rt)$, donc~:
\begin{equation}
\label{egc}
c(\tp)=c(\rt).
\end{equation}
On note $c$ cette valeur commune donnée par \eqref{egc}.

\begin{lemm}
\label{MrsBarbour}
On a $t(\tp)\<t(\rt)$.
\end{lemm}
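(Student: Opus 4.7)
The plan is to exhibit an injection from the set of torsion classes $\langle\tp'\rangle$, where $\tp'$ ranges over integral Speh representations of $\G$ congruent to $\tp$, into the set of torsion classes $\langle\rt'\rangle$, where $\rt'$ ranges over integral irreducible representations of $\MA$ congruent to $\rt$. By Definition \ref{BenOuiMonGars} and Theorem \ref{CongCusp}, the cardinalities of these sets are $t(\tp)$ and $t(\rt)$ respectively, so this yields the desired inequality. The candidate map sends $\langle\tp'\rangle$ to $\langle\widetilde{\lj}_\ell(\tp')\rangle$.

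To check that this is well-defined, I would first verify that $\widetilde{\lj}_\ell(\tp')$ is integral. This follows from Proposition \ref{Leighton}: since $\tp'$ is integral, $\widetilde{\JL}_\ell(\tp')\in\Gg(\MA,\qlb)^{\rm e}$, and the identity $\widetilde{\JL}_\ell(\tp')=\epsilon(\tp')\cdot\widetilde{\lj}_\ell(\tp')$ (the $\qlb$-analogue of \eqref{poncelet}) together with the fact that $\Gg(\MA,\qlb)^{\rm e}$ is a subgroup stable under negation forces $\widetilde{\lj}_\ell(\tp')$ itself to be integral. Then the commutative square of Proposition \ref{Leighton} applied to the hypothesis $\r_\ell(\tp')=\r_\ell(\tp)$ gives
\begin{equation*}
\epsilon(\tp')\cdot\r_\ell\widetilde{\lj}_\ell(\tp')
=\r_\ell\widetilde{\JL}_\ell(\tp')
=\JL_\ell\r_\ell(\tp')
=\JL_\ell\r_\ell(\tp)
=\epsilon(\tp)\cdot\r_\ell\rt.
\end{equation*}

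The key observation at this point is a sign argument. Both $\widetilde{\lj}_\ell(\tp')$ and $\rt$ are irreducible integral cuspidal $\qlb$-representations of $\MA$, so by Theorem \ref{LouisPhilippedOrleans} their reductions modulo $\ell$ are non-zero effective elements of $\Gg(\MA,\flb)$ (actual sums with positive coefficients of $\flb$-irreducibles). Two non-zero effective elements cannot be negatives of each other, which forces $\epsilon(\tp')=\epsilon(\tp)$ and consequently $\r_\ell\widetilde{\lj}_\ell(\tp')=\r_\ell\rt$. Thus $\widetilde{\lj}_\ell(\tp')$ is congruent to $\rt$ modulo $\ell$, and the map $\langle\tp'\rangle\mapsto\langle\widetilde{\lj}_\ell(\tp')\rangle$ takes values in the correct set.

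Finally, compatibility with the equivalence relation and injectivity both follow from the compatibility of $\widetilde{\lj}_\ell$ with twists by unramified characters. Every unramified character of $\G$ or $\MA$ factors through the reduced norm as $\mu\circ{\rm Nrd}$ for a common $\mu$ on $\mult\F$, and by \eqref{poncelet} together with property (\ref{P12}.4) the sign $\epsilon$ is invariant under such twists, so $\widetilde{\lj}_\ell(\tp'\chi)=\widetilde{\lj}_\ell(\tp')\chi$ for the corresponding unramified character of $\MA$. This shows the map descends to torsion classes; bijectivity of $\widetilde{\lj}_\ell$ then immediately yields its injectivity. The main obstacle, and the only non-formal step, is the sign argument, which hinges on the fact that reductions modulo $\ell$ of integral irreducible cuspidals of $\MA$ are effective and non-zero.
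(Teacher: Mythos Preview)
Your proof is correct and follows the same strategy as the paper's: exhibit an injection from torsion classes congruent to $\tp$ into inertial classes congruent to $\rt$ via $\tlj_\ell$, using the commutative square of Proposition~\ref{Leighton} and compatibility with unramified twists. The paper's own argument is rather brief and does not explicitly address the sign discrepancy between $\widetilde{\JL}_\ell$ and $\tlj_\ell$; your positivity observation (that $\r_\ell$ of an integral irreducible is a nonzero effective class, so two such cannot be negatives of one another) is exactly the detail needed to fill this in, and in fact your invocation of Theorem~\ref{LouisPhilippedOrleans} is more than necessary --- mere nonvanishing of the Brauer reduction of an integral irreducible suffices.
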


\begin{proof}
La bijection $\tlj_{\ell}$ étant compatible à la torsion par un caractère, 
elle induit une bi\-jec\-tion entre classes de torsion de $\Zz(\G,\qlb)$ et 
classes inertielles de $\XA(\A,\qlb)$, encore notée $\tlj_{\ell}$.
Soit $\Oo(\tp)$ l'en\-semble des classes de torsion des 
éléments de $\Zz(\G,\qlb)$ qui sont 
entiers et congrus à la classe de torsion de $\tp$.

Si la classe de torsion $\langle\tp_1\rangle$ est dans $\Oo(\tp)$, 
le diagramme commutatif de la proposition \ref{Leighton} 
et la compatibilité de $\tlj_{\ell}$ à la tor\-sion montrent que l'image $\rt_1$ 
de $\tp_1$ a sa classe inertielle dans $\Oo(\rt)$. 
La bijection $\tlj_{\ell}$ induit donc 
une injection de $\Oo(\tp)$ dans $\Oo(\rt)$, ce dont on déduit l'inégalité 
$t(\tp)\<t(\rt)$. 
\end{proof}

\begin{lemm}
\label{MrBarbour}
On a $t(\rt)=t(\tp)$ et $w(\rt)=w$.
\end{lemm}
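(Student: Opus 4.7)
L'approche sera d'appliquer la proposition \ref{CongSpeh} simultan\'ement \`a $\tp$ et \`a $\rt$, puis de comparer les quantit\'es $t(\tp)w$ et $t(\rt)w(\rt)$ en utilisant l'\'egalit\'e $c(\tp)=c(\rt)=c$ donn\'ee par \eqref{egc}, l'in\'egalit\'e $w\<w(\rt)$ de \eqref{inegw}, et l'in\'egalit\'e $t(\tp)\<t(\rt)$ du lemme \ref{MrsBarbour}. On distinguera trois cas selon la position de $w$ par rapport \`a $1$ et \`a $\ell$, et dans chacun on montrera que la seule valeur possible pour $w(\rt)$ est $w$, ce qui forcera ensuite $t(\rt)=t(\tp)$.

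Les cas $w=1$ et $w\>\ell$ se traiteront directement. Si $w=1$, on obtient $t(\tp)=c$, donc $t(\rt)\>c$ ; or, d\`es que $w(\rt)>1$, la proposition \ref{CongSpeh} donne $t(\rt)<c$, d'o\`u la n\'ecessit\'e $w(\rt)=1$. Si $w\>\ell$, alors $w(\rt)\>\ell$ aussi, et les formules $t(\tp)=c(\ell-1)/(\ell w)$ et $t(\rt)=c(\ell-1)/(\ell w(\rt))$ combin\'ees \`a $t(\tp)\<t(\rt)$ donnent imm\'ediatement $w(\rt)\<w$, d'o\`u l'\'egalit\'e.

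Le point d\'elicat sera le cas interm\'ediaire $1<w<\ell$. Comme $\tp$ existe et $t(\tp)\>1$, la formule $t(\tp)w=c-1$ impose $c\>2$, puis $c\>\ell$ puisque $c$ est une puissance de $\ell$. La sous-alternative $1<w(\rt)<\ell$ se traite par un simple rapport entre $t(\tp)=(c-1)/w$ et $t(\rt)=(c-1)/w(\rt)$, ce qui donne $w(\rt)\<w$. Toute la difficult\'e consistera donc \`a exclure la possibilit\'e $w(\rt)\>\ell$ : dans cette situation, l'in\'egalit\'e $t(\tp)\<t(\rt)$ se r\'e\'ecrit $cw(\ell-1)\>\ell w(\rt)(c-1)\>\ell^2(c-1)$, et puisque $w\<\ell-1$ on en d\'eduira $c\<\ell^2/(2\ell-1)<\ell$, contredisant $c\>\ell$. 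C'est cet argument arithm\'etique, qui repose de mani\`ere essentielle sur le fait que $c$ est une puissance de $\ell$, qui constituera le point central de la preuve.
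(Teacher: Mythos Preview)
Your proof is correct and follows essentially the same strategy as the paper's: case split on the position of $w$ relative to $1$ and $\ell$, apply Proposition~\ref{CongSpeh} on both sides, and combine the inequalities $w\<w(\rt)$ and $t(\tp)\<t(\rt)$ to force $w(\rt)=w$. The only difference is in the intermediate case $1<w<\ell$: where you split further into the sub-cases $w(\rt)<\ell$ and $w(\rt)\>\ell$ and exclude the latter by your arithmetic argument yielding $c<\ell$, the paper instead uses the single uniform bound $t(\rt)\<(c-1)/w(\rt)$, valid for all $w(\rt)>1$ once one knows $c\>\ell$ (a fact you make explicit but the paper leaves implicit), and concludes $w(\rt)\<w$ directly.
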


\begin{proof}
Partons des inégalités~:
\begin{equation*}
t(\tp) \< t(\rt) \< c
\end{equation*}
données par le lemme \ref{MrBarbour} et le théorème \ref{CongSpehIntro}.

Si $w=1$, alors $t(\tp)=c$ d'après le théorème \ref{CongSpehIntro}, 
ce dont on déduit l'égalité $t(\rt)=t(\tp)$.
On en déduit aussi que $t(\rt)=c$, ce qui implique que $w(\rt)=1$ d'après 
la proposition \ref{CongSpehIntro}. 

Si $1<w<\ell$, alors d'après le théorème \ref{CongSpehIntro} on a~:
\begin{equation*}
\frac{c-1}{w} = t(\tp) \< t(\rt) \< \frac{c-1}{w(\rt)}.
\end{equation*}
On en déduit que $w(\rt)\<w$, ce qui, avec \eqref{inegw}, entraîne $w(\rt)=w$, 
puis $t(\rt)=t(\tp)$.

Enfin, si $w\>\ell$, alors on a aussi $w(\rt)\>\ell$ d'après \eqref{inegw}.
Par conséquent, d'après le lemme \ref{wadm}, les entiers 
$w$ et $w(\rt)$ sont divisibles par $\ell$. 
D'après le théorème \ref{CongSpehIntro}, on a~:
\begin{equation*}
\frac{c(\ell-1)}{w\ell} = t(\tp) \< t(\rt) = \frac{c(\ell-1)}{w(\rt)\ell}.
\end{equation*}
On en déduit que $w(\rt)\<w$, ce qui, avec \eqref{inegw}, entraîne $w(\rt)=w$, 
puis $t(\rt)=t(\tp)$.
\end{proof}

L'égalité $t(\rt)=t(\tp)$ entraîne le corollaire suivant. 

\begin{coro}
\label{Gaston}
L'image de $\Oo(\tp)$ par $\tlj_{\ell}$ est égale à $\Oo(\rt)$.
\end{coro}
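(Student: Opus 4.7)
The corollary is an immediate consequence of the two preceding lemmas, and my plan is to simply harvest them. The proof of Lemma \ref{MrsBarbour} already produced an injective map $\tlj:\Oo(\tp)\to\Oo(\rt)$, arising from the commutative diagram of Proposition \ref{Leighton} together with the compatibility of $\tlj$ with unramified twists: any integral torsion class congruent to $\langle\tp\rangle$ is sent to an integral inertial class congruent to the inertial class of $\rt$. The finite sets $\Oo(\tp)$ and $\Oo(\rt)$ have cardinalities $t(\tp)$ and $t(\rt)$ respectively, by definition of these invariants.

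The point is then that Lemma \ref{MrBarbour} established the equality $t(\tp)=t(\rt)$, treating separately the three cases $w=1$, $1<w<\ell$ and $w\geqslant\ell$ by comparing with the formula of Proposition \ref{CongSpeh}. Since $\tlj$ restricts to an injection between two finite sets of the same cardinality, it is automatically a bijection between them, which is exactly the statement of the corollary.

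There is no real obstacle: the content lies entirely in the two lemmas already proved, and the corollary is a pigeonhole-style consequence. The only thing to keep in mind while writing the proof is to state clearly that the injection in question is the one constructed inside the proof of Lemma \ref{MrsBarbour}, so that the reader knows \emph{which} map between $\Oo(\tp)$ and $\Oo(\rt)$ is being claimed to be a bijection.
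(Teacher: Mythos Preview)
Your proposal is correct and matches the paper's own argument exactly: the paper simply states that the equality $t(\rt)=t(\tp)$ from Lemma~\ref{MrBarbour} implies the corollary, relying on the injection $\Oo(\tp)\hookrightarrow\Oo(\rt)$ already built in the proof of Lemma~\ref{MrsBarbour}. There is nothing to add.
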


L'égalité $w(\rt)=w$ implique que $\tlj_{\ell}$ induit une injection~:
\begin{equation*}
\Zz_{w}(\G,\qlb)\to\Zz_{w}(\A,\qlb).
\end{equation*}
Le corollaire \ref{Gaston} implique l'existence d'une application injective 
$\boldsymbol{\jmath}{}_\ell^{\boldsymbol{*}}$ de $\Zz_w(\G,\flb)$ dans 
$\Zz_w(\A,\flb)$ telle qu'on ait l'égalité~:
\begin{equation*}
\r_\ell\circ\tlj_{\ell} = \boldsymbol{\jmath}{}_\ell^{\boldsymbol{*}}\circ\r_\ell
\end{equation*}
sur $\Zz_{w}(\G,\qlb)$.
(L'existence de $\boldsymbol{\jmath}{}_\ell^{\boldsymbol{*}}$ provient du fait que l'image de $\Oo(\tp)$ par
$\tlj_{\ell}$ est incluse dans $\Oo(\rt)$ et son injectivité de ce que 
cette image est exactement $\Oo(\rt)$.)
Le morphisme $\r_\ell$ étant sur\-jec\-tif, $\boldsymbol{\jmath}{}_\ell^{\boldsymbol{*}}$ est unique.
Pour prouver que $\boldsymbol{\jmath}{}_\ell^{\boldsymbol{*}}$ est bijective, nous
utilisons un argument de 
comptage. 

Fixons un nombre rationnel $j\>0$ et notons~: 
\begin{equation*}
\Ee_\ell(\G,w,j)
\end{equation*}
l'en\-sem\-ble des $\r_\ell(\langle\tp\rangle)$ 
où $\tp\in\Zz_w(\G,\qlb)$ est de niveau normalisé inférieur ou égal à $j$
(para\-graphe \ref{Mafoi}).
Par compatibilité à la torsion et comme $\tlj_\ell$ préserve le 
niveau normalisé, l'application $\boldsymbol{\jmath}{}_\ell^{\boldsymbol{*}}$ induit une application injective~:
\begin{equation}
\label{souffrance}
\Ee_\ell(\G,w,j)\to\Ee_\ell(\A,w,j).
\end{equation}
D'après le théorème \ref{ComptagePIIntro},
ces ensembles sont finis et de même cardinal~; l'application 
\eqref{souffrance} est donc bijective. 

Considérons maintenant un élément de $\Zz_w(\A,\flb)$,
que l'on écrit $\r_\ell(\rt)$.
Si l'on note $j$ le niveau normalisé de $\rt$,
alors $\r_\ell(\langle\tp\rangle)$ a un antécédent par \eqref{souffrance}, 
qu'on écrit $\r_\ell(\langle\tp\rangle)$, et on vérifie que $\r_\ell(\tp)$ est 
un antécédent de $\r_\ell(\rt)$ par $\boldsymbol{\jmath}{}_\ell^{\boldsymbol{*}}$.
Ainsi $\boldsymbol{\jmath}{}_\ell^{\boldsymbol{*}}$ est bijective, ce qui prouve l'assertion 2 du théorème. 

Pour toute représentation 
$\pi\in\Zz(\G,\flb)$, il existe donc un signe $\epsilon(\pi)$ tel que 
$\JL_\ell(\pi)=\epsilon(\pi)\cdot\boldsymbol{\jmath}{}_\ell^{\boldsymbol{*}}(\pi)$, ce qui prouve 
l'assertion 1 du théorème.

Soit maintenant $\rt$ dans $\Zz_{w}(\A,\qlb)$. 
Par surjectivité de $\boldsymbol{\jmath}{}_\ell^{\boldsymbol{*}}$ et de $\r_\ell$, 
la représentation 
$\r_\ell(\rt)$ a un antécédent $\tp$ dans $\Zz_{w}(\G,\qlb)$, 
ce qui prouve l'assertion 3. 

Pour terminer la démonstration du théorème \ref{RogerCarbury}, 
supposons enfin que l'ensemble $\Zz_{w}(\G,\qlb)$ est vide.
Dans ce cas, $\Ee_\ell(\G,w,j)$ est vide 
pour tout nombre rationnel $j\>0$.
D'après le théorème \ref{ComptagePIIntro}, l'ensemble 
$\Zz_{w}(\A,\flb)$ est vide lui aussi.
L'ensemble $\Zz_{w}(\A,\qlb)$ est donc également vide, 
ce dont on déduit que le théo\-rème est vrai dans ce cas. 
\end{proof}

Le corollaire suivant, qui généralise \cite{Datj} Théorème 1.2.4, s'obtient 
en appliquant le théorème \ref{RogerCarbury} avec $w=1$
(voir le corollaire \ref{superSpehIntrotro}).

\begin{coro}
\label{superSpeh}
La bijection $\boldsymbol{\jmath}{}_\ell^{\boldsymbol{*}}$ induit une bijection 
entre l'ensemble $\Zz_1(\G,\flb)$ 
des re\-pré\-sen\-ta\-tions super-Speh de $\G$ et $\XA(\A,\flb)$. 
\end{coro}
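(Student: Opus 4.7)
La strat�gie consiste � appliquer le th�or�me \ref{RogerCarbury} dans le cas particulier $w=1$, ce qui fournit imm�diatement la bijection $\lj_\ell : \ZZz_1(\G,\flb) \to \ZZz_1(\MA,\flb)$. Par la description donn�e au paragraphe \ref{menard}, l'ensemble $\ZZz_1(\G,\flb)$ n'est autre que l'ensemble des repr�sentations super-Speh de $\G$ � coefficients dans $\flb$, ce qui identifie la source de la bijection cherch�e. Il reste donc � �tablir l'�galit� $\ZZz_1(\MA,\flb)=\XA(\MA,\flb)$, l'inclusion directe �tant imm�diate puisque tout �l�ment de $\ZZz_1(\MA,\flb)$ est irr�ductible par construction (voir le paragraphe suivant la d�finition \ref{DefZl}).

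Pour l'inclusion r�ciproque, soit $\s\in\XA(\MA,\flb)$. Puisque $\MA=\GL_1(\D)$ ne poss�de aucun sous-groupe parabolique propre, $\s$ est automatiquement supercuspidale, ce qui donne $k(\s)=1$ et permet d'�crire $\s=\Z(\s,1)$. Il suffit alors d'exhiber un rel�vement $\qlb$-adique irr�ductible cuspidal enti�re $\cuspit$ de $\s$ avec $a(\cuspit)=1$, \ie dont la r�duction modulo $\ell$ est la repr�sentation irr�ductible $\s$ elle-m�me~; on aura alors $w(\cuspit)=k(\s)a(\cuspit)=1$, donc $\cuspit\in\Zz_1(\MA,\qlb)$ et $\s=\r_\ell(\cuspit)\in\ZZz_1(\MA,\flb)$, ce qui conclut.

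L'obstacle principal est donc l'existence d'un tel rel�vement � r�duction irr�ductible sur le groupe multiplicatif d'une alg�bre � division. Ce point est bien �tabli dans la litt�rature au moyen de la th�orie des types pour $\MA$ (\cite{Vigb}) et constitue d�j� le point de d�part de la correspondance de Jacquet-Langlands modulo $\ell$ de Dat \cite{Datj} entre $\flb$-repr�sentations irr�ductibles de $\MA$ et $\flb$-repr�sentations super-Speh de $\H=\GL_n(\F)$, que le pr�sent corollaire �tend � une forme int�rieure quelconque $\G$ gr�ce au th�or�me \ref{RogerCarbury}.
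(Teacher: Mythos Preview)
Your proof is correct and follows the same approach as the paper, which simply states that the corollary ``s'obtient en appliquant le th\'eor\`eme \ref{RogerCarbury} avec $w=1$''. You provide additional justification for the identification $\ZZz_1(\MA,\flb)=\XA(\MA,\flb)$, which the paper states without proof just after \eqref{DefZl} (for general $\G$, that $\ZZz_1(\G,\flb)$ coincides with the super-Speh representations), and your argument via the existence of a lift with irreducible reduction, imported from the type theory for $\MA$ as in \cite{Datj}, is exactly the right ingredient behind that assertion.
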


\subsection{Preuve du théorème \ref{MAINTHEOREM}}

Après avoir fait un détour par les représentations de Speh, 
nous montrons comment déduire le théorème \ref{MAINTHEOREM} 
du théorème \ref{RogerCarbury}.

Soit $\tp$ une représentation de $\Dd(\G,\qlb)$, qu'on écrit $\L(\cuspit,r)$ où $r$ 
est un diviseur de $m$ et $\cuspit$ une re\-pré\-sentation irréductible cuspidale 
$\ell$-adique de $\GL_{mr^{-1}}(\D)$.
Notant $\omega_{\cuspit}$ le caractère central de $\cuspit$, celui de $\tp$ est 
égal à~:
\begin{equation*}
\omega^r_{\cuspit}\cdot(\nu_{\cuspit}^{})^{r(r-1)/2}.
\end{equation*}
La représentation $\tp$ est entière si et seulement si $\cuspit$ l'est, 
et $\cuspit$ est entière si et seulement si $\omega_{\cuspit}$ l'est.
Par conséquent, $\tp$ est entière si et seulement si son caractère central l'est.
La correspondance~:
\begin{equation*}
\tjl_\ell : \Dd(\G,\qlb)\to\XA(\A,\qlb)
\end{equation*}
obtenue à partir de \eqref{JL} par changement du corps des coefficients 
préserve le caractère central.
On en déduit que $\tp$ est entière si et seulement si son image par 
$\tjl_\ell$ l'est.

Soient maintenant deux re\-pré\-sentations entières $\tp_1,\tp_2$ de $\Dd(\G,\qlb)$.
Appliquant l'involution de Zelevinski, on obtient deux re\-pré\-sentations 
$\tp_1^*,\tp_2^*\in\Zz(\G,\qlb)$, qui sont entiè\-res car de même caractère 
central que $\tp_1,\tp_2$.
D'après le théorème \ref{RogerCarbury}, les représentations $\tp_1^*,\tp_2^*$ 
sont congruentes mod $\ell$ si et seulement si leurs images~:
\begin{equation*}
\tlj_{\ell}(\tp_i^*)=\tjl_\ell(\tp_i),
\quad
i=1,2,
\end{equation*}
le sont.
Pour mettre fin à la preuve du théorème \ref{MAINTHEOREM},
il reste à vérifier que $\tp_1^*,\tp_2^*$ sont congruentes 
mod $\ell$ si et seulement si $\tp_1,\tp_2$ le sont, ce qui suit 
de \cite{MSb} Proposition A.3.

\section{Une formule de déterminant}
\label{S11}

Etant donnés une $\F$-algèbre à division centrale $\D$ de degré réduit $d$ 
et un corps algé\-bri\-que\-ment clos $\R$ de caractéristique différente de 
$p$, on rappelle que les notations $\XA(\D,\R)$ et $\RA(\D,\R)$ ont été 
définies en \eqref{vieillenot}, et on pose~:
\begin{equation*}
\RAT(\D,\R) = \prod\limits_{m\>0} \RA(\GL_m(\D),\R).
\end{equation*}
Il sera commo\-de d'utiliser le langage des séries formelles pour manipuler les
éléments de $\RAT(\D,\R)$.
Un élément $\Pi\in\RAT(\D,\R)$ sera noté~:
\begin{equation*}
\Pi = \sum\limits_{m\>0} \Pi_m\T^{md},
\quad
\Pi_m\in\RA(\GL_m(\D),\R)
\end{equation*}
où l'entier $md$ peut être considéré comme le ``degré déployé'' de $\Pi_m$.

\begin{defi}
\label{DEFSFS}
Si $\rho$ est une représentation irréductible cuspidale de $\GL_m(\D)$
à coefficients dans $\R$, on lui associe la série formelle~:
\begin{equation}
\label{DEFSFSRHO}
\SFS(\rho,\T) 
= \sum\limits_{t\>0} (-1)^t \Z(\rho,t) \T^{mdt} 
\end{equation}
dans $\RAT(\D,\R)$.
Plus généralement, pour tout $r\in\ZZ$ et tout entier $e\>1$, on pose~: 
\begin{equation*}
\SFS(\rho,e,r) = 
\sum\limits_{r+te\>0} (-1)^{r+te} \Z(\rho,r+te) \T^{md(r+te)},
\end{equation*}
la somme portant sur les entiers $t\in\ZZ$ tels que 
$r+te\>0$.
On a donc $\SFS(\rho,\T) = \SFS(\rho,1,0)$.
\end{defi}

\subsection{Preuve de la proposition \ref{FORUMEZD}}
\label{PSEG}

Supposons dans ce paragraphe que $\R$ est de caractéristique $\ell>0$.
La preuve de la proposition \ref{FORUMEZD} est inspirée de \cite{KL}. 
Fixons une $\R$-repré\-sen\-ta\-tion cuspidale $\rho$ de 
$\GL_m(\D)$ et posons~:
\begin{equation*}
e = 
\left\{
\begin{array}{ll}
\omega(\rho) & \text{si $\omega(\rho)>1$,} \\
\ell & \text{sinon.}
\end{array}
\right.
\end{equation*}
Etant donnés $a,b\in\ZZ$, il sera commode dans ce paragraphe 
d'introduire la notation~:
\begin{equation*}
\SS(a,b) = \SS_{\rho}(a,b) = \left\{
\begin{array}{rcll}
\Z(\rho\nu_\rho^a,b-a+1) & \in & \XA(\GL_{m(b-a+1)}(\D),\R) 
& \text{si $a\<b-1$,} \\
0 & \in & \RA(\D,\R) & \text{sinon,}
\end{array}
\right.
\end{equation*} 
et de définir la série formelle~: 
\begin{equation}
\label{NOTSFZ}
\SFZ(a,b) = \SFZ_{\rho}(a,b) = \sum\limits_{r\in\ZZ} 
(-1)^{re+b-a+1} \SS(a,b+re) \T^{md(re+b-a+1)}
\in \RAT(\D,\R)
\end{equation}
qui ne dépend que des classes de $a$ et $b$ modulo $e$, compte tenu de 
la définition de $e$. 
La représen\-tation $\rho$ et l'algèbre à division $\D$ 
étant fixées dans toute cette section,
il est commode de poser $\Y=-\T^{md}$ et d'écrire~: 
\begin{equation*}
\SFZ(a,b) = \sum\limits_{r\in\ZZ} \SS(a,b+re) \Y^{re+b-a+1}
\end{equation*}
pour alléger les notations.
Remarquons que, en termes de la nota\-tion introduite dans la définition 
\ref{DEFSFS}, on a simplement~:
\begin{equation}
\label{plustardvousverrez}
\SFZ(a,b) = \SFS(\rho\nu_\rho^a,e,b-a+1)
\end{equation}
mais la notation \eqref{NOTSFZ} sera plus com\-mode dans cette section. 

On note encore $\com$ la comultiplication de $\RAT=\RAT(\D,\R)$ 
définie à partir de \eqref{VentreDieu2}.

\begin{lemm}
On a~: 
\begin{equation*}
\label{Jaquetsegmentcomplete} 
\com(\SFZ(a,b)) = \sum\limits_{i\in \ZZ/e \ZZ} \SFZ(a,i)\otimes\SFZ(i+1,b). 
\end{equation*}
\end{lemm}

\begin{proof}
Compte tenu de (\ref{P81}.1), on a~:
\begin{eqnarray*}
\com(\SFZ(a,b)) & = & \sum\limits_{r\in\ZZ} \com(\SS(a,b+re)) \Y^{b+re-a+1} \\
&=& \sum\limits_{r\in\ZZ} \sum\limits_{k\in\ZZ} \SS(a,k) \Y^{k-a+1} \otimes 
    \SS(k+1,b+re) \Y^{b+re-k} \\
&=& \sum\limits_{k\in\ZZ} \SS(a,k) \Y^{k-a+1} \otimes \SFZ(k+1,b).
\end{eqnarray*}
Effectuant la division euclidienne de $k$ par $e$, on obtient~:
\begin{equation*}
\com(\SFZ(a,b)) = \sum_{i=0}^{e-1} \sum\limits_{t\in\ZZ} 
\SS(a,i+te) \Y^{i+te-a+1} \otimes \SFZ(i+1,b) 
\end{equation*}
ce qui donne le résultat annoncé. 
\end{proof}

Soient maintenant des entiers 
$a_1,b_1,\dots,a_r,b_r\in\ZZ$ avec $r\>1$.
On pose~:
\begin{equation*}
\SFD(a_1,\dots,a_r,b_1,\dots,b_r) = \det (\SFZ(a_i,b_j)) \in \RAT
\end{equation*}
qui ne dépend que des $a_i$ et des $b_j$ modulo $e$, 
mais qui dépend de l'ordre des termes. 

\begin{lemm}
Soient des entiers $a_1,b_1,\dots,a_r,b_r\in\ZZ$.
\begin{enumerate}
\item
Supposons qu'il existe $i\neq j$ tels que $a_i\equiv a_j$ mod $e$
ou $b_i\equiv b_j$ mod $e$.
Alors~:
\begin{equation*}
\SFD(a_1,\dots,a_r,b_1,\dots,b_r)=0.
\end{equation*}
\item
Soient $\s,\phi$ des permutations de $\{1,\dots,e\}$. 
Alors~:
\begin{equation*}
\SFD(a_{\s(1)},\dots,a_{\s(r)},b_{\phi(1)},\dots,b_{\phi(r)}) = 
\epsilon(\s)\epsilon(\phi)\cdot\SFD(a_1,\dots,a_r,b_1,\dots,b_r)
\end{equation*}
où $\epsilon(\s)$ désigne la signature de la permutation $\s$.
\end{enumerate}
\end{lemm}

\begin{prop}
Posons $\SFD=\SFD(a_1,\dots,a_r,b_1,\dots,b_r)$.
On a~:
\begin{equation*}
\label{Jaquetdeterminant} 
\com(\SFD)=\sum\limits_{1\<c_1 < c_2 < \dots < c_r \< e}
\SFD(a_1,\dots,a_r,c_1,\dots,c_r)\otimes
\SFD(c_1+1,\dots,c_r+1,b_1,\dots,b_r). 
\end{equation*}
\end{prop}

\begin{proof}
De l'égalité~:
\begin{equation*}
\SFD = \sum\limits_{\s} \epsilon(\s) \cdot \prod\limits_{i=1}^{e} 
\SFZ(a_i,b_{\s(i)})
\end{equation*}
on déduit~:
\begin{equation*}
\com(\SFD) 
= \sum\limits_{\s} \epsilon(\s) 
\sum\limits_{k_1,\dots,k_r}
\left(\prod\limits_{i=1}^{e} 
\SFZ(a_i,k_i) \otimes \prod\limits_{i=1}^{e} 
\SFZ(k_i+1,b_{\s(i)}) \right) 
\end{equation*}
où $k_1,\dots,k_r$ décrivent l'ensemble $\ZZ/e\ZZ$. 
Ceci est égal à~:
\begin{equation*}
\sum\limits_{k_1,\dots,k_r} \prod\limits_{i=1}^{e} 
\SFZ(a_i,k_i) \otimes \left( \sum\limits_{\s} \epsilon(\s) \cdot 
\prod\limits_{i=1}^{e} \SFZ(k_i+1,b_{\s(i)}) \right)
\end{equation*}
et le terme entre parenthèses est égal au déterminant 
$\SFD(k_1+1,\dots,k_r+1,b_{1},\dots,b_{r})$.
Celui-ci étant nul dès lors qu'il existe $i\neq j$ tels que 
$k_i=k_j$, on peut écrire~:
\begin{eqnarray*}
\com(\SFD) 
& = & \sum\limits_{1\<c_1<c_2<\dots<c_r\<e} 
\sum\limits_{\s} \prod\limits_{i=1}^{e} \SFZ(a_i,c_{\s(i)}) \otimes 
\SFD(c_{\s(1)}+1,\dots,c_{\s(r)}+1,b_1,\dots,b_r) \\
& = & \sum\limits_{1\<c_1<c_2<\dots<c_r\<e} 
\sum\limits_{\s} \epsilon(\s)
\prod\limits_{i=1}^{e} \SFZ(a_i,c_{\s(i)}) \otimes 
\SFD(c_{1}+1,\dots,c_{r}+1,b_1,\dots,b_r) \\
& = & \sum^{\phantom{e}}\limits_{1\<c_1<c_2<\dots<c_r\<e} 
\SFD(a_1,\dots,a_r,c_1,\dots,c_r) \otimes 
\SFD(c_1+1,\dots,c_r+1,b_1,\dots,b_r)
\end{eqnarray*}
comme annoncé.
\end{proof}

En particulier, si $r=e$, on a~:
\begin{equation*}
\com(\SFD(a_1,\dots,a_e,b_1,\dots,b_e)) 
= \SFD(a_1,\dots,a_e,1,\dots,e)
\otimes \SFD(2,\dots,e+1,b_1,\dots,b_e).
\end{equation*}
Pour que ceci ne soit pas nul, il faut que les $a_i$ mod $e$ 
soient tous distincts, donc définissent une permutation de $\ZZ/e\ZZ$,
notée $a$, et de même pour les $b_j$ mod $e$, qui forment une permutation 
$b$.
Supposant que ce soit le cas, et notant~:
\begin{equation}
\label{DEFNOTD}
\SFDD=
\SFDD(\rho,\T)=
\SFD(2,\dots,e+1,1,\dots,e)=\det (\SFZ(i+1,j)))
\end{equation} 
on trouve
$\SFD(a_1,\dots,a_e,b_1,\dots,b_e) = (-1)^{e-1}\epsilon(a)\epsilon(b)\cdot\SFDD$
et en particulier~:
\begin{equation}
\label{comDelta}
\com(\SFDD) = \SFD(2,\dots,e+1,1,\dots,e)
\otimes\SFD(2,\dots,e+1,1,\dots,e) 
= \SFDD\otimes\SFDD.
\end{equation}
Pour tout $r\>1$, notons $\D_r=\D(\rho,r)$ 
le coefficient de $\Y^{er}$ dans $\SFDD$, de sorte que~:
\begin{equation*}
\SFDD = \sum\limits_{r\>0} \D_r \Y^{er}.
\end{equation*}
De la formule \eqref{comDelta} on déduit le corollaire suivant.

\begin{coro}
\label{coro:jacquetpi}
Soit un entier $k\in\{0,\dots,mer\}$.
On a~:
\begin{equation*}
\rp_{(k,mer-k)}(\D_r) = \left\{
\begin{array}{ll}
0 & \text{si $k$ n'est pas multiple de $me$,} \\
\D_{s}\otimes\D_{r-s}
& \text{si $k=mes$ avec $s\in\{0,\dots,r\}$}.
\end{array}
\right.
\end{equation*}
\end{coro}

\begin{rema}
Développant le déterminant, on obtient la formule 
suivante~:
\begin{equation}
\label{formulex}
\D_r = \sum\limits_{(r_0,\dots,r_{e-1})} 
(-1)^{r }{\epsilon(r_0,\dots,r_{e-1}) }
\cdot \Z(\rho,r_0)\times\dots\times\Z(\rho\nu_\rho^{e-1},r_{e-1}) 
\end{equation}
où $(r_0,\dots,r_{e-1})$ décrit les familles d'entiers $\>0$ 
de somme $re$ et 
telles que $i\mapsto i+r_i-r$ mod $e$ 
soit une permutation de $\ZZ/e\ZZ$, de signature notée 
$\epsilon(r_0,\dots,r_{e-1})$.
\end{rema}

Posons maintenant $\rho_0=\Sp(\rho,e)$, qui est cuspidale de degré $me$
(voir le \S\ref{P32}).

\begin{prop}
\label{formuleZrho0}
\label{formulesynthetique}
Pour tout $r\>1$, on a $\D_r=(-1)^{r}\cdot \Z(\rho_0,r)$.
\end{prop}

\begin{proof}
On prouve la proposition par récurrence sur l'entier $r\geq 1$. 
D'après la formule \eqref{formulex}, la quantité 
$\D_1$ est une combinaison linéaire de termes de la forme~: 
\begin{equation*}
\Z(\rho,r_0)\times\dots\times\Z(\rho\nu_\rho^{e-1},r_{e-1}),
\quad r_0,\dots,r_{e-1}\>0,
\end{equation*}
avec $r_0+\dots+r_{e-1}=e$.
D'après \cite{MSc} Corollaire 8.5, 
le seul terme irréductible résiduellement non dégénéré dans $\D_1$ 
apparaît pour $r_0=\dots=r_{e-1}=1$.
Il est égal à $\rho_0$ et apparaît dans $\D_1$ avec le coefficient 
$-1$. 
Ecrivons~:
\begin{equation*}
\label{resdif}
\D_1+\rho_0 = a_1\pi_1+\dots+a_r\pi_r
\end{equation*}
où $\pi_i$ est une représentation irréductible 
résiduellement dégénérée de la forme $\Z(\upmu_i\boxtimes\rho)$ 
avec $\upmu_i$ un multisegment formel convenable, et où $a_i\in\ZZ$. 
D'après le corollaire \ref{coro:jacquetpi}, on a $\rp_{(t,me-t)}(\D_1)=0$ 
pour tout $1\<t\<me-1$, et on a un résultat analogue pour $\rho_0$
qui est cuspidale. 
On déduit du lemme \ref{lemm:cle} que $\D_1+\rho_0=0$, c'est-à-dire 
que $\D_1=-\rho_0$. 

On suppose maintenant la proposition prouvée pour tout $i\<r-1$. 
Si $r\>2$, la quantité $\D_r$ ne contient aucun terme irréductible 
résiduellement non dégénéré. 
Ecrivons~:
\begin{equation*}
\label{resdif}
\D_r - (-1)^{r}\cdot \Z(\rho_0,r) = a_1\pi_1+\dots+a_r\pi_r
\end{equation*}
avec les $a_i$ et les $\pi_i$ comme plus haut. 
D'après le corollaire \ref{coro:jacquetpi}, 
pour $s\in\{1,\dots,r-1\}$, on a~:
\begin{equation*}
\rp_{me\cdot(s,r-s)}(\D_r) = \D_{s}\otimes\D_{r-s}.
\end{equation*}
Par hypothèse de récur\-ren\-ce, on a~:
\begin{eqnarray*}
\D_{s}\otimes\D_{r-s} 
&=& (-1)^{s}(-1)^{r-s}\cdot 
\Z(\rho_0,s)\otimes\Z(\rho_0,r-s) \\
&=& (-1)^{r}\cdot\rp_{me\cdot(s,r-s)}(\Z(\rho_0,r))
\end{eqnarray*}
ce dont on déduit le résultat, à nouveau grâce au lemme \ref{lemm:cle}. 
\end{proof}

\begin{rema}
\label{noubliespas}
La formule de la proposition \ref{formuleZrho0} peut être résumée par 
l'identité~:
\begin{equation*}
\SFDD(\rho,\T) = \SFS(\rho_0,\T)
\end{equation*}
entre séries formelles (voir la définition \ref{DEFSFS} et 
la proposition \ref{FORUMEZD}). 
\end{rema}

\subsection{}
\label{P112}

Supposons dans ce paragraphe que $e$ est égal à $\ell$. 
Nous allons voir que le déterminant $\SFDD$ prend dans ce cas une forme 
particulière. 
En effet on a $\omega(\rho)=1$, \ie que 
$\rho\nu_\rho$ est isomorphe à $\rho$.
La série $\SFZ(a,b)$ ne dépend donc que de $b-a$ mod $e$.
Posant $\SFA_i=\SFZ(0,i)$ pour $i\in\ZZ/\ell\ZZ$, on a donc~:
\begin{equation*}
\SFDD = \det
\begin{pmatrix}
\SFA_1 & \SFA_2 & \dots & \SFA_{\ell} \\
\SFA_{\ell} & \ddots & \ddots & \vdots \\
\vdots & \ddots & \ddots & \SFA_2 \\
\SFA_{2} & \dots & \SFA_{\ell} & \SFA_1
\end{pmatrix}.
\end{equation*}
La proposition suivante montre que $\SFDD$ se factorise remarquablement 
dans l'anneau $\RAT\otimes_\ZZ\CC$.

\begin{prop}
\label{Flyte}
Supposons que $e=\ell$.
Fixons une racine de l'unité $\xi\in\mult\CC$ d'ordre $\ell$, 
et posons~:
\begin{equation*}
\SFS^{(i)} = \SFA_0 + \xi^{i}\SFA_1 + \dots + 
\xi^{i(\ell-1)}\SFA_{\ell-1} 
= \sum\limits_{r\>0} \xi^{ir} \Z(\rho,r) \Y^r
\end{equation*} 
pour tout $i\in\ZZ/\ell\ZZ$.
Alors on a~:
\begin{equation*}
\SFDD = \SFS^{(1)}\SFS^{(2)}\dots\SFS^{(\ell)}
\end{equation*}
dans $\RAT\otimes_\ZZ\CC$.
\end{prop}

\begin{proof}
Commençons par énoncer le résultat général suivant. 

\begin{lemm}
\label{lemmedetsym}
Soit un entier $n\>1$, et soit $\C$ un anneau commutatif 
possédant une racine de l'uni\-té $\xi\in\mult\C$ d'ordre $n$.
Pour tous $a_1,\dots,a_n\in\C$, on a~:
\begin{equation*}
\det
\begin{pmatrix}
a_1 & a_2 & \dots & a_{n} \\
a_{n} & \ddots & \ddots & \vdots \\
\vdots & \ddots & \ddots & a_2 \\
a_{2} & \dots & a_{n} & a_1
\end{pmatrix} 
= \prod\limits_{i=1}^{n} \left(\sum\limits_{j=1}^n \xi^{ij}a_j\right).
\end{equation*}
\end{lemm}

\begin{proof}
Notons $\Omega\in\Mat_n(\C)$ 
la matrice corres\-pon\-dant à la 
permutation $i\mapsto i-1$ mod $n$.
Elle est diagonalisable sur $\C$, de valeurs propres 
$1,\xi,\dots,\xi^{n-1}$.
La matrice 
$a_1\Omega+a_2\Omega^2+\dots+a_n\Omega^n$ 
l'est donc également et ses valeurs propres sont les~:
\begin{equation*}
u_i = \sum\limits_{j=1}^n \xi^{ij}a_j,
\quad 
i\in\{1,\dots,n\}.
\end{equation*}
Le résultat s'ensuit.
\end{proof}

\begin{rema}
Si $n$ est inver\-si\-ble dans $\C$,
l'application de $\C[\Omega]$ dans $\C^n$ définie par~: 
\begin{equation*}
\sum\limits_{i=1}^n a_i\Omega^i = 
\begin{pmatrix}
a_1 & a_2 & \dots & a_{n} \\
a_{n} & \ddots & \ddots & \vdots \\
\vdots & \ddots & \ddots & a_2 \\
a_{2} & \dots & a_{n} & a_1
\end{pmatrix}
\mapsto (u_1,\dots,u_n),
\quad
u_i = \sum\limits_{j=1}^n \xi^{ij}a_j
\end{equation*}
est un isomorphisme de $\C$-algèbres.
L'isomorphisme réciproque est donné par~: 
\begin{equation*}
(u_1,\dots,u_n) \mapsto 
\begin{pmatrix}
a_1 & a_2 & \dots & a_{n} \\
a_{n} & \ddots & \ddots & \vdots \\
\vdots & \ddots & \ddots & a_2 \\
a_{2} & \dots & a_{n} & a_1
\end{pmatrix},
\quad
a_i = \frac{1}{n}\cdot \sum\limits_{j=1}^n \xi^{-ij}u_j.
\end{equation*}
\end{rema}

Pour prouver la proposition \ref{Flyte}, partons de l'égalité~:
\begin{equation}
\label{factoSFDD1}
\SFDD = 
\prod\limits_{j=1}^{\ell}
\left(\sum\limits_{i=1}^{\ell} \xi^{ij} \SFA_i\right) 
\end{equation}
donnée par le lemme \ref{lemmedetsym} et remarquons que~:
\begin{equation*}
\SFA_i = \frac{1}{\ell}\cdot\sum\limits_{k=1}^{\ell} \xi^{-ik} \SFS^{(k)}.
\end{equation*}
Remplaçant dans \eqref{factoSFDD1}, on obtient~:
\begin{equation*}
\SFDD = 
\prod\limits_{j=1}^{\ell}\left(
\sum\limits_{k=1}^{\ell} \left(\frac{1}{\ell}\cdot 
\sum\limits_{i=1}^{\ell} \xi^{i(j-k)} \right) 
\SFS^{(k)}\right) 
\end{equation*}
ce qui donne le résultat annoncé, puisque la somme intérieure
vaut $0$ si $k\neq j$ et $1$ sinon.
\end{proof}

\begin{rema}
\label{FlyteRem}
Il sera utile de présenter la formu\-le de la proposition \ref{Flyte} sous une 
forme légèrement différente. 
Si l'on fixe $\omega\in\mult\CC$ telle que
$\omega^{md}$ soit une racine de l'unité d'ordre $\ell$, cette formu\-le devient~:
\begin{equation*}
\SFDD (\T) = \SFS(\T)\SFS(\omega\T)\dots\SFS(\omega^{\ell-1}\T)
\end{equation*}
indépendamment du choix de $\omega$.
\end{rema}

\begin{coro}
\label{ChouFleur}
Pour tout entier $v\>0$, on pose~:
\begin{equation*}
\SFS_v (\T) 
= \SFS(\Sp(\rho,\ell^v),\T)
= \sum\limits_{r\>0} \Z(\Sp(\rho,\ell^v),r) (-\T^{md\ell^v})^r
\end{equation*}
qui est la série associée par \eqref{DEFSFSRHO} 
à la représen\-ta\-tion cuspidale $\Sp(\rho,\ell^v)$,
et on fixe une racine de l'unité $\omega\in\mult\CC$ telle que 
$\omega^{md}$ soit d'ordre $\ell^v$.
Pour tout $v\>0$ on a l'égalité~:
\begin{equation*}
\SFS_{v} (\T) = \SFS (\T)\SFS (\omega\T)\dots\SFS (\omega^{\ell^v-1}\T)
\end{equation*}
dans l'anneau $\RAT\otimes_\ZZ\CC$.
\end{coro}

\begin{proof}
On suppose que $v\>1$. 
Notons $\pi$ la représen\-tation $\Sp(\rho,\ell^v)$
et notons $\k$ la représentation $\Sp(\rho,\ell^{v-1})$.
D'après la remarque \ref{FlyteRem}, on a~:
\begin{equation*}
\SFS_{v} (\T) = \SFS_{v-1}(\T)\SFS_{v-1}(\a\T)\dots\SFS_{v-1}(\a^{\ell-1}\T)
\end{equation*}
car $\pi$ est égale à $\Sp(\k,\ell)$,
où $\a$ est une racine de l'unité telle que $\a^{md\ell^{v-1}}$ soit 
d'ordre $\ell$.
On~peut donc supposer que $\a=\omega$.
Raisonnant par récurrence sur $v$, on a~:
\begin{equation*}
\SFS_{v-1} (\T) = \SFS (\T)\SFS (\mu\T)\dots\SFS (\mu^{\ell^{v-1}-1}\T)
\end{equation*}
où $\mu$ est une racine de l'unité telle que $\mu^{md}$ soit d'ordre $\ell^{v-1}$. 
On~peut donc supposer que $\mu=\omega^\ell$.
Ainsi $\SFS_v(\T)$ est le produit des $\SFS(\omega^{i+\ell j}\T)$ pour 
$i\in\{1,\dots,\ell\}$ et $j\in\{1,\dots,\ell^{v-1}\}$, 
ce qui donne la formule attendue. 
\end{proof}

\section{Le morphisme de Langlands-Jacquet modulo $\ell$}
\label{S12}

Fixons un nombre premier $\ell\neq p$
et une $\F$-algèbre à division centrale $\D$ de degré réduit $d$. 

Dans cette section, nous prouvons le théorème \ref{theoLJmodlcorpus}. 

\subsection{}
\label{BaduTLJ}

La $\ZZ$-algèbre commutative 
$\RA(\D,\qlb)$ est librement engendrée par l'ensemble~:
\begin{equation*}
\Dd(\D,\qlb) = \coprod\limits_{m\>1} \Dd(\GL_m(\D),\qlb).
\end{equation*}
Etant donné un $m\>1$, on a la correspondance de Jacquet-Langlands 
$\ell$-adique $\widetilde{\boldsymbol{\pi}}_\ell$ introduite au 
paragraphe \ref{introtpil}.
Fai\-sant varier $m\>1$, on obtient une application injective~:
\begin{equation}
\label{Terrier}
\widetilde{\boldsymbol{\pi}}_\ell : \Dd(\D,\qlb) \to \Dd(\F,\qlb)
\end{equation}
(encore notée $\widetilde{\boldsymbol{\pi}}_\ell$)
dont l'image est formée des représentations de degré divisible par $d$.

\begin{defi}[Badulescu \cite{BaduJIMJ} \S3.1]
Le \textit{morphisme de Langlands-Jacquet $\ell$-adique} est 
l'unique morphis\-me d'anneaux~:
\begin{equation*}
\TLJ_\ell : \RA(\F,\qlb) \to \RA(\D,\qlb)
\end{equation*}
coïncidant avec la réciproque de 
$\widetilde{\boldsymbol{\pi}}_\ell$ sur les représenta\-tions de 
$\Dd(\F,\qlb)$ 
de degré divisible par $d$, et prenant la valeur $0$ sur les autres. 
\end{defi}

\begin{prop}
\label{reduit}
Soit un entier $n\>1$ et soit $\rt$ une représentation
irréductible cuspidale $\ell$-adique de $\GL_n(\F)$.
\begin{enumerate}
\item
Etant donné un entier $r\>1$, pour que l'image de $\Z(\rt,r)$ par $\TLJ_\ell$ 
soit non nulle, il faut et il suffit que $r$ soit un multiple de l'entier~:
\begin{equation*}
s = \frac{d}{(d,n)}.
\end{equation*}
\item
Il y a un unique entier $m\>1$ et une unique représentation irréductible 
cuspidale $\ell$-adique $\rt'$ de $\GL_m(\D)$ tels que~:
\begin{equation}
\label{formuleLJZ}
\TLJ_\ell\Big(\Z(\rt,r)\Big) = (-1)^{r-r'}\cdot\Z(\rt',r')
\end{equation}
pour tout $r=r's$, avec $r'\>1$.
\item
On a $s=s(\rt')$ et le degré $m$ de la représentation $\rt'$ vérifie la 
relation $md=ns$.
\end{enumerate}
\end{prop}

\begin{proof}
Soit un entier $r\>1$.
Appliquant l'involution de Zelevinski à la représentation de Speh $\Z(\rt,r)$ 
(voir la formule \eqref{BMX} et \cite{BaduJIMJ} Théorème 3.16), 
l'image de $\Z(\rt,r)$ 
est non nulle si et seulement si $d$ divise $nr$, 
\ie si et seulement si $r$ est un multiple de $s$, auquel cas elle prend la 
forme \eqref{formuleLJZ} pour une unique représentation irréductible 
cuspidale $\ell$-adique $\rt'$.
Si l'on pose en particulier $r=s$, on trouve que $ns=md$.

Le fait que $s=s(\rt')$ provient de l'invariance du degré 
paramé\-trique \cite{BHJL3} 2.8 Corollary 1, et du fait 
que le degré paramé\-trique de $\rt'$ est par définition
égal au quotient de $md$ par $s(\rt')$.
\end{proof}

A l'aide de la notation introduite dans la définition \ref{DEFSFS}, 
la formule \eqref{formuleLJZ} est résumée par~:
\begin{equation}
\label{TLJSFS}
\TLJ_{\ell}\Big(\SFS(\rt,\T)\Big) = \SFS(\rt',\T).
\end{equation}
Nous aurons besoin d'une version un peu plus générale de ce résultat. 
On utilise à nouveau les notations introduites dans la définition 
\ref{DEFSFS}. 

\begin{lemm}
\label{quentin}
Soit $\rt$ une représentation irréductible cuspidale $\ell$-adique de 
$\GL_n(\F)$, $n\>1$, et soit des entiers $e\>1$ et $r\in\ZZ$.
On pose $e'=e(e,s)^{-1}$ et $s'=s(e,s)^{-1}$.
\begin{enumerate}
\item
Si $r$ n'est pas divisible par $(e,s)$, 
alors l'image de $\SFS_{\rt}(e,r)$ par $\TLJ_\ell$ est nulle.
\item
Si $r$ est divisible par $(e,s)$, 
alors~:
\begin{equation*}
\TLJ_\ell\Big(\SFS(\rt,e,r)\Big) = \SFS(\rt',e',r')
\end{equation*}
où $r'$ est l'unique entier compris entre $0$ et $e'-1$ tel que 
$sr'$ soit congru à $r$ mod $e$.
\end{enumerate}
\end{lemm}

\begin{proof}
La représentation de Speh $\Z(\rt,r+te)$
se transfère en un élément non nul de $\RAT(\D,\qlb)$
si et seule\-ment si $r+te$ est di\-vi\-si\-ble par $s$.
Pour que ce soit le cas, il faut que l'entier~$r$ ap\-par\-tienne à 
$e\ZZ+s\ZZ$.
Supposons que c'est le cas, et écrivons $r+t_0e=sr'$ 
avec $t_0,r'\in\ZZ$.
On peut supposer que $r'\in\{0,\dots,e'-1\}$.
Alors $r+te$ est divisible par $s$ si et seulement si $t-t_0$ 
est divisible par $s'$, \ie que $(t-t_0)e=hse'$ avec $h\in\ZZ$.
On obtient~:
\begin{equation*}
\TLJ_{\ell}(\Z(\rt,r+te)) = (-1)^{l(s-1)}\cdot\Z(\rt',r'+he')
\end{equation*}
avec $l\in\ZZ$ défini par $ls=r+te$, ce qui donne le résultat annoncé. 
\end{proof}

La $\ZZ$-algèbre commutative
$\RA(\D,\flb)$ est librement engendrée par l'ensem\-ble~:
\begin{equation*}
\Zz_{1}(\D,\flb) = 
\coprod\limits_{m\>1} \Zz_{1}(\GL_m(\D),\flb)
\end{equation*}
des représentations super-Speh $\ell$-modulaires.
Etant donnés un $m\>1$ et une $\F$-algèbre à division centrale $\A$ 
de degré réduit $md$, on a des bijections~: 
\begin{equation}
\label{compobijmodl}
\Zz_{1}(\GL_m(\D),\flb) \to \XA(\A,\flb) \to \Zz_{1}(\GL_{md}(\F),\flb)
\end{equation}
la première étant donnée par le corollaire \ref{superSpeh} et la seconde étant 
la réciproque de celle donnée par le corollaire \ref{superSpeh} appliqué à la 
forme déployée $\GL_{md}(\F)$.
Si $\Z(\rho,r)$ est une $\flb$-représentation super-Speh de $\GL_{md}(\F)$ 
et si $\Z(\rho',r')$ est son image réciproque dans
$\Zz_{1}(\GL_{m}(\D),\flb)$ par 
\eqref{compobijmodl}, 
on définit une application injective par~: 
\begin{eqnarray}
\notag
\Zz_1(\GL_{md}(\F),\flb) &\to& 
\RA(\GL_{m}(\D),\flb) \\
\label{2ndeEq}
\Z(\rho,r) & \mapsto & (-1)^{r-r'}\cdot\Z(\rho',r').
\end{eqnarray}
Le \textit{morphisme de Langlands-Jacquet mod $\ell$}
est l'unique morphis\-me d'an\-neaux~:
\begin{equation*}
\LJ_\ell : \RA(\F,\flb) \to \RA(\D,\flb)
\end{equation*}
coïncidant avec \eqref{2ndeEq} pour tout $m\>1$, 
et s'annulant 
en toute représentation super-Speh dont le degré n'est pas divisible par $d$.
Le résultat suivant précise le théorème \ref{caciqueintro}.

\begin{theo}
\label{theoLJmodlcorpus}
Le morphisme $\LJ_\ell$ est le seul morphisme d'anneaux rendant commutatif 
le diagramme~:
\begin{equation*}
\label{LJTlmorphisme2corpus}
\begin{CD}
\RA(\F,\qlb)^{{\rm e}} @>{\TLJ_\ell}>>
\RA(\D,\qlb)^{{\rm e}}\\
@V{\r_\ell}VV @VV {\r_\ell} V \\
\RA(\F,\flb)^{\phantom {\rm e}} @>>{\LJ_\ell}> 
\RA(\D,\flb)^{\phantom {\rm e}} \\
\end{CD}
\end{equation*}
où $\Gg(\G,\qlb)^{{\rm e}}$ désigne le sous-groupe de $\Gg(\G,\qlb)$ engendré
par les représenta\-tions irréductibles entières. 
\end{theo}

L'unicité provient du fait que $\r_\ell$ est surjective. 
Pour prouver que le diagramme est commuta\-tif, 
il suffit de prouver l'égalité~:
\begin{equation}
\label{resteafaire}
\Big(\LJ_\ell\circ\r_\ell\Big)(\tp) = \Big(\r_\ell\circ\TLJ_\ell\Big)(\tp)
\end{equation}
pour toute représentation de Speh $\ell$-adique entière $\tp\in\Zz(\F,\qlb)$. 
Si $\tp$ est $\ell$-super-Speh, \ie si $w(\tp)=1$, 
l'égalité recherchée est une 
conséquence immédiate de la définition de $\LJ_\ell$,
en vertu du théorème \ref{RogerCarbury}.
Supposons maintenant que $\tp$ n'est pas $\ell$-super-Speh
et écrivons-la sous la forme $\Z(\rt,r)$ où $\rt$ est 
une $\qlb$-représentation
irréductible cuspidale entière telle que $w(\rt)>1$.
Fixant $\rt$, nous allons prouver \eqref{resteafaire} pour tous les 
$r\>1$ en même temps. 
En d'autres termes, nous allons prouver l'égalité~:
\begin{equation}
\label{resteafaireserie}
\Big(\LJ_\ell\circ\r_\ell\Big)
\left(\SFS(\rt,\T) \right) = 
\Big(\r_\ell\circ\TLJ_\ell\Big) 
\left(\SFS(\rt,\T) \right) 
\end{equation}
pour toute représentation irréductible cuspidale $\ell$-adique entière $\rt$
telle que $w(\rt)>1$.

\subsection{Preuve du théorème \ref{theoLJmodlcorpus}}

Fixons 
une $\qlb$-représentation irréductible cuspidale entière $\rt$ 
de $\GL_n(\F)$ pour $n\>1$.
On pose $w=w(\rt)$ et on suppose que $w>1$.
On note $\rho$ la réduction mod $\ell$ de $\rt$.
La réduction mod $\ell$~de $\Z(\rt,r)$ est donc égale à $\Z(\rho,r)$ pour tout 
$r\>1$, ce que résumé l'identité
$\r_\ell\big(\SFS(\rt,\T)\big)=\SFS(\rho,\T)$.
Pour prouver le théorème \ref{theoLJmodlcorpus}, il s'agit de montrer que les 
séries~: 
\begin{eqnarray*}
\SFU &=&\r_\ell\circ\TLJ_\ell\Big(\SFS(\rt,\T)\Big), \\
\SFL &=& \LJ_\ell\Big(\SFS(\rho,\T)\Big)
\end{eqnarray*}
sont égales.

Ecrivons $\rho$ sous la forme $\Sp(\a,w)$ avec $\a$ 
supercuspidale, et écrivons $w$ sous la forme $\omega(\a)\ell^v$, 
où $v\>0$ est la valuation $\ell$-adique de $w$.
Pour alléger les notations, on pose $e=\omega(\a)$ et on note 
$\tau$ la représentation cuspidale $\Sp(\a,e)$,
de sorte que $\rho$ est égale à $\Sp(\tau,\ell^v)$.

Fixons un relèvement $\ell$-adique $\at$ de $\a$ 
(dont l'existence est assurée par \cite{Vigb} Paragraphe 5.10
et \cite{MSc} Théorème 6.11) 
et notons $\at'$ la représentation 
cuspidale dans $\XA(\D,\qlb)$ qui lui est associée par la 
proposition \ref{reduit}. 
D'après le théorème \ref{RogerCarbury}, 
la réduction mod $\ell$ de $\at'$ est irréductible car $\at'$ 
est $\ell$-supercuspidale~; on la note $\a'$. 
On pose $s_0=s(\at')$.
Notons qu'on a aussi $s_0=s(\a')$.

Soit $\rt'$ la représentation irréductible cuspidale entière de 
$\XA(\D,\qlb)$ associée à $\rt$ par la proposition \ref{reduit}. 
D'après~le théo\-rème \ref{RogerCarbury}, on a $w(\rt')=w$.
On pose $s=s(\rt')$ et on note $m$ le degré de $\rt'$, 
de sorte que $md=ns$. 
On pose $k=(w,m)$ et $a=wk^{-1}$.
D'après le lemme \ref{CN}, 
pour tout fac\-teur irréductible $\rho'$ de la réduction mod $\ell$ de $\rt'$, 
on a $a=a(\rt')$ et $k=k(\rho')$.

\begin{lemm}
La représentation $\Sp(\a',k)$ est 
un fac\-teur irréductible de la réduction mod $\ell$ de $\rt'$, et on a $s_0=as$.
\end{lemm}

\begin{proof}
Fixons une $\F$-algèbre à division centrale de 
degré réduit $ns$, et notons $\A$ son groupe multiplicatif.
Notons $\tp$ la représentation de Speh 
$\Z(\rt,s)$ et notons $\pi$ sa réduction mod $\ell$, 
qui est donc égale à $\Z(\rho,s)$.
Dans la base des représentations super-Speh, on écrit~:
\begin{equation*}
\pi = \sum\limits_{i=1}^{\omega(\a)} \textsf{n}_i \cdot \Z(\a\nu^i,ws) + \delta 
\end{equation*}
où $\delta$ est une combinaison linéaire d'induites de représentations 
super-Speh,
\ie dans le noyau du morphisme $\JL_\ell$ défini par la proposition 
\ref{Leighton}, 
et où les $\textsf{n}_i$ sont des entiers.
Appliquant $\JL_{\ell}$, on trouve~:
\begin{eqnarray*}
\notag
\JL_{\ell}(\pi) 
&=& \sum\limits_{i=1}^{\omega(\a)} \textsf{n}_i \cdot \JL_{\ell}(\Z(\a\nu^i,ws)) \\
\label{bla2}
&=& \sum\limits_{i=1}^{\omega(\a)} \textsf{n}_i \cdot (-1)^{ws-1} \cdot \a_\A\nu^i
\end{eqnarray*}
où $\a_\A=\boldsymbol{\jmath}{}_\ell^{\boldsymbol{*}}(\Z(\a,ws))$ 
est irréductible car $\at$ est $\ell$-supercuspi\-dale.
D'après la proposition \ref{Leighton}, on sait par ailleurs que 
$\JL_{\ell}(\pi)$ est égal à la réduction mod $\ell$ de 
$(-1)^{s-1}\cdot\tp_\A$ où $\tp_\A$ est le transfert de $\tp$ à $\A$. 
Comme $w(\tp_\A)=w$, on trouve~:
\begin{equation*}
\JL_{\ell}(\pi) 
= (-1)^{s-1}\cdot\r_\ell(\tp_\A) 
= (-1)^{s-1}\cdot\sum\limits_{i=1}^{w} \pi_\A\nu^i
\end{equation*}
où $\pi_\A$ est un facteur irréductible de la réduction mod $\ell$ de 
$\tp_\A$. 
On peut donc supposer que $\a_\A$ est égale à $\pi_\A$.

Fixons maintenant un facteur irréductible $\rho'$ de la réduction mod 
$\ell$ de $\rt'$, qu'on écrit sous la forme $\Sp(\b,k)$ avec $\b$ 
supercuspidale.
Faisant avec $\rt'$ ce qu'on vient de faire avec $\tp$, on déduit que, 
quitte à tordre $\rho'$ par une puissance de $\nu$, 
on peut supposer que le transfert de $\Z(\b,k)$ à $\A$
est égal à $\pi_\A$. 

Les représentations super-Speh $\Z(\a,ws)$ et $\Z(\b,k)$ ayant le même 
transfert à $\A$, on déduit de la définition de $\LJ_\ell$ 
et du corollaire \ref{superSpeh} que~:
\begin{equation*}
\Z\Big(\a',\frac{ws}{s_0}\Big) \simeq \Z(\b,k)
\end{equation*}
ce dont on déduit que $ws=ks_0$, \ie $as=s_0$,
et qu'on peut supposer que $\b=\a'$.
\end{proof}

Dorénavant, on fixe un facteur irréductible $\rho'$ de la réduction mod 
$\ell$ de $\rt'$, qu'on écrit $\Sp(\a',k)$.
On pose $k=\omega(\a')\ell^u$ où $u\in\{0,\dots,v\}$ désigne la valuation 
$\ell$-adique de $k$, et $e'=\omega(\a')$.

Rappelons que $w(\rt')=w>1$ d'après le théorème \ref{RogerCarbury}.
Le lemme \ref{wadm} implique donc que $e$ est égal à $\e(\a')$.
Par définition de $\omega(\a')$ et comme $s_0=s(\a')$, on trouve~:
\begin{equation}
\label{FOREP}
e' = \frac{e}{(e,s_0)}.
\end{equation}
On remarque aussi que $s(\rho')=s(\a')=s_0$.
On a donc $a=(w,s_0)$ d'après le lemme \ref{CN}.
Ainsi on retrouve (voir le corollaire \ref{eponine}) le fait que les entiers~:
\begin{equation*}
s = \frac{s_0}{(w,s_0)},
\quad
k = \frac{w}{(w,s_0)}
\end{equation*}
sont premiers entre eux.

D'après le corollaire \ref{ChouFleur}, on a~:
\begin{equation}
\label{FOR1V}
\SFL = \LJ_\ell(\SFS (\tau,\T)) \LJ_\ell(\SFS (\tau,\omega\T))
\dots \LJ_\ell(\SFS (\tau,\omega^{\ell^v-1}\T)).
\end{equation}
où $\omega\in\mult\CC$ est une racine de l'unité telle que 
$\omega^{\deg(\tau)}$ soit d'ordre $\ell^v$.
En outre, si $e\neq1$, la propo\-si\-tion \ref{formuleZrho0} implique que 
$\SFS(\tau,\T)$ est égale à $\SFDD(\a,\T)$.
(Pour la notation, voir \eqref{DEFNOTD}.)
D'après la proposition \ref{ConjRedModlSpehIntro}, on a~:
\begin{equation*}
\label{lordsebastian}
\SFU = \SFS(\rho',\T) \SFS(\rho'\nu,\T) \dots\SFS(\rho'\nu^{a-1},\T).
\end{equation*}
Notons $\tau'$ la représentation cuspidale $\Sp(\a',e')$,
de sorte que $\rho'$ est égale à $\Sp(\tau',\ell^u)$.
Appliquant le corollaire \ref{ChouFleur}, cela donne~:
\begin{equation}
\label{lordsebastian2}
\SFU = \prod\limits_{t=1}^{a}
\SFS (\tau'\nu^t,\T)\SFS (\tau'\nu^t,\xi\T)\dots\SFS (\tau'\nu^t,\xi^{\ell^u-1}\T)
\end{equation}
où $\xi$ est une racine de l'unité telle que $\xi^{\deg(\tau')d}$ 
soit d'ordre $\ell^u$.
Comme on a l'identité~:
\begin{equation*}
\ell^u \cdot \deg(\tau')d = \ell^v \cdot \deg(\tau)s
\end{equation*}
et comme $s$ est premier à $k$, 
donc à $\ell^u$, on en déduit que $\xi^{\deg(\tau)\ell^{v-u}}$ est d'ordre $\ell^u$.
On peut donc supposer que la racine de l'unité $\omega$ choisie en \eqref{FOR1V}
est égale à $\xi$, ce que nous ferons désormais. 

\subsection{Le cas où $e=1$}
\label{LCOE1}

Supposons que $e=1$, \ie que $w$ est une puissance de $\ell$. 
Alors $e'=1$ d'après \eqref{FOREP} et $\e(\a')=1$ d'après le lemme \ref{wadm}.
Par définition de $\LJ_\ell$, on a~:
\begin{equation*}
\LJ_\ell\Big(\SFS(\a,\T)\Big) 
= \r_\ell\Big(\TLJ_\ell\Big(\SFS(\at,\T)\Big)\Big)
= \r_\ell\Big(\SFS(\at',\T)\Big)
=\SFS(\a',\T).
\end{equation*}
Compte tenu de \eqref{FOR1V} et de \eqref{lordsebastian2},
on obtient d'une part~:
\begin{equation*}
\SFL = \SFS (\a',\T)\SFS (\a',\xi\T)\dots\SFS (\a',\xi^{w-1}\T)
\end{equation*}
et d'autre part~:
\begin{equation*}
\SFU = 
\Big(\SFS (\a',\T)\SFS (\a',\xi\T)\dots\SFS (\a',\xi^{k-1}\T)\Big)^a.
\end{equation*}
Pour en déduire que $\SFU=\SFL$, il suffit de voir que $w=ak$ et que,
pour tout entier $i\in\{1,\dots,w\}$, la série~:
\begin{equation*}
\SFS (\a',\xi^{i}\T) = \sum\limits_{r\>0} (\xi^{i\cdot\deg(\a')d})^{r} 
\Z(\a',r) (-\T^{\deg(\a')d})^{r}
\end{equation*}
ne dépend que de $i$ mod $k$ car $\xi^{\deg(\a')d}$ est d'ordre $k=\ell^u$.

\subsection{Le cas où $e\neq1$}

On suppose désormais que $e\neq1$.
On a donc~:
\begin{equation}
\label{FOR2V}
\SFL 
= \LJ_\ell(\SFDD (\a,\T)) \LJ_\ell(\SFDD (\a,\xi\T))
\dots \LJ_\ell(\SFDD (\a,\xi^{\ell^v-1}\T)).
\end{equation}
Nous calculons l'image de $\SFDD(\a,\T)$ par $\LJ_\ell$ dans le lemme 
suivant. 

\begin{lemm}
\label{derniereffort}
Supposons que $e\neq1$.
\begin{enumerate}
\item
Si $e'=1$, alors~:
\begin{equation*}
\LJ_\ell(\SFDD(\a,\T)) = 
\SFS (\a',\T)\SFS (\a'\nu,\T)\dots\SFS(\a'\nu^{e-1},\T).
\end{equation*}
\item
Si $e'\neq1$, alors~:
\begin{equation*}
\LJ_\ell(\SFDD(\a,\T)) = 
\SFDD (\a',\T)\SFDD (\a'\nu,\T)\dots\SFDD(\a'\nu^{e_0-1},\T).
\end{equation*}
avec $e_0=(e,s_0)$.
\end{enumerate}
\end{lemm}

\begin{proof}
Notons $\SFB$ 
le déterminant de la ma\-tri\-ce carrée de taille $e$ de terme général~: 
\begin{equation*}
\SFB(i,j)
= \SFS(\at\nu^{i+1},e,j-i)
= \sum\limits_{r\in\ZZ} \Z(\at\nu^{i+1},j-i+re) \Y^{re+j-i}
\end{equation*}
avec $\Y=-\T^{\deg(\a)}$.
On a par définition~: 
\begin{equation*}
\LJ_{\ell}\Big(\SFDD(\a,\T)\Big) = \r_\ell\circ\TLJ_{\ell}\Big(\SFB\Big)
\end{equation*}
et nous allons calculer $\TLJ_{\ell}(\SFB)$. 
Par le lemme \ref{quentin}, pour que l'image de $\SFB(i,j)$
par $\TLJ_\ell$ soit non nulle, il faut et il suffit que $j-i$ soit un multiple 
de $e_0$, auquel cas on a~:
\begin{equation*}
\TLJ_\ell(\SFB(i,j)) = \SFS(\at'\nu^{i+1},e',r_{ij})
\end{equation*}
où $r_{ij}$ est l'unique entier compris entre $0$ et $e'-1$ tel que 
$s_0r_{ij}$ soit congru à $j-i$ mod $e$.

Si $e'=1$, alors $\TLJ_\ell(\SFB(i,j))$ est égal à 
$\SFS(\at'\nu^{i+1},\T)$ si et seulement si $e$ divise $j-i$.
On a donc~:
\begin{equation*}
\TLJ_{\ell}(\SFB) = \SFS(\at',\T)\SFS(\at'\nu,\T)\dots\SFS(\at'\nu^{e-1},\T).
\end{equation*}
Réduisant mod $\ell$, on obtient le résultat voulu. 
Supposons maintenant que $e'\neq1$.

\begin{lemm}
Pour que $\a'\nu^i$ soit de la forme $\a'\nu_{\a'}^{l}$ pour $l\in\ZZ$, 
il faut et suffit que $i$ soit un multiple de $e_0$.
\end{lemm}

\begin{proof}
Pour que $\a'\nu^i\simeq\a'\nu_{\a'}^{l}$ pour un $l\in\ZZ$, il faut et suffit 
que $\e(\a')$ divise $i-ls_0$ pour un $l\in\ZZ$, 
\ie que $i$ appartienne à $e\ZZ+s_0\ZZ=e_0\ZZ$.
\end{proof}

\begin{lemm}
\label{cinvs}
On a $\a'\nu^{e_0}\simeq\a'\nu_{\a'}^{c}$ où $c\in\{1,\dots,e'-1\}$ est 
l'inverse de $s$ mod $e'$.
\end{lemm}

\begin{proof}
Soit un entier $i\in\ZZ$.
Pour que $\a'\nu^{e_0}\simeq\a'\nu_{\a'}^{i}$,
il faut et suffit que $e$ divise $e_0-is_0$,
\ie que $e'$ divise $1-is$.
\end{proof}

Notons $\SFM$ la matrice carrée de taille $e$ de terme général~:
\begin{equation*}
\SFM(i,j) = \r_\ell\circ\TLJ_\ell\Big(\SFB(i,j)\Big).
\end{equation*}
Rappelons que $\SFM(i,j)=0$ dès que $e_0$ ne divise pas $j-i$.
La forme particulière de cette matrice va nous permettre de factoriser 
son déterminant. 
Pour $1\<l\<e_0$, on note $\SFM_{l}$ la matrice carrée de taille $e'$ 
de terme général
$\SFM_l(x,y) = \SFM(l+xe_0,l+ye_0)$.
Comme fonction de $\SFM_l$, le dé\-ter\-mi\-nant 
$\det(\SFM)$ est multilinéaire alterné.
Il se factorise donc par $\det(\SFM_l)$. 
On trouve que $\det(\SFM)$ est égal à $\det(\SFM_1)\dots\det(\SFM_{e_0})$.
Compte tenu de la formule \eqref{plustardvousverrez}
et du lemme \ref{cinvs}, 
on a~:
\begin{equation*}
\SFM_l(sx,sy)
= \SFS(\a'\nu_{\a'}^{x}\nu^{1+l},e',y-x)
= \SFZ(\a'\nu^{l+1},x,y-1).
\end{equation*}
On trouve que $\det(\SFM_l) = \SFDD(\a'\nu^{l+1},\T)$,
ce qui implique la formule annoncée.
\end{proof}

Dans les deux cas ($e'=1$ et $e'\neq1$), 
raisonnant comme dans le paragraphe \ref{LCOE1}, on trouve~:
\begin{equation*}
\SFL 
= \Big(\LJ_\ell(\SFDD (\a,\T)) \LJ_\ell(\SFDD(\a,\xi\T))
\dots \LJ_\ell(\SFDD (\a,\xi^{\ell^u-1}\T))\Big)^{\ell^{v-u}}
\end{equation*}
à partir de \eqref{FOR2V},
car $\xi^{\deg(\a')d}$ est d'ordre $\ell^u$. 

Supposons que $e'=1$, et prouvons que $\SFU=\SFL$.
D'après le lemme \ref{derniereffort}, on a~:
\begin{equation*}
\SFL = \prod\limits_{l=1}^{e}
\Big(\SFS(\a'\nu^l,\T) \SFS (\a'\nu^l,\xi\T)
\dots \SFS (\a'\nu^l,\xi^{\ell^u-1}\T)\Big)^{\ell^{v-u}}.
\end{equation*}
D'autre part, comme $e=\e(\a')$ et $a=e\ell^{v-u}$, 
la formule \eqref{lordsebastian2} entraîne~:
\begin{eqnarray*}
\SFU &=& 
\prod\limits_{t=1}^{a}
\SFS (\a'\nu^t,\T)\SFS (\a'\nu^t,\xi\T)\dots\SFS 
(\a'\nu^t,\xi^{\ell^u-1}\T) \\
&=& \prod\limits_{t=1}^{e}
\Big(\SFS (\a'\nu^t,\T) \SFS (\a'\nu^t,\xi\T)\dots \SFS 
(\a'\nu^t,\xi^{\ell^u-1}\T)\Big)^{\ell^{v-u}}
\end{eqnarray*}
ce qui prouve l'égalité cherchée. 

Supposons que $e'\neq1$.
Remarquons que $e_0$ est la partie première à $\ell$ de $(w,s_0)=a$, 
qui vaut $1$ ou $\e(\rho')=\e(\tau')$.
D'après le lemme \ref{derniereffort}, on a~:
\begin{eqnarray*}
\SFL 
&=& \prod\limits_{l=1}^{e_0}
\Big(\SFDD(\a'\nu^l,\T) \SFDD (\a'\nu^l,\xi\T)
\dots \SFDD (\a'\nu^l,\xi^{\ell^u-1}\T)\Big)^{\ell^{v-u}} \\
&=& \prod\limits_{l=1}^{e_0}
\Big(\SFS(\tau'\nu^l,\T) \SFS (\tau'\nu^l,\xi\T)
\dots \SFS (\tau'\nu^l,\xi^{\ell^u-1}\T)\Big)^{\ell^{v-u}}
\end{eqnarray*}
ce qui, compte tenu de la formule \eqref{lordsebastian2} et du fait que 
$a=e_0\ell^{v-u}$, est égal à $\SFU$.



\providecommand{\bysame}{\leavevmode ---\ }
\providecommand{\og}{``}
\providecommand{\fg}{''}
\providecommand{\smfandname}{\&}



\begin{thebibliography}{10}

\bibitem{ABPS}
A.-M.~Aubert, P.~Baum, R.~Plymen et M.~Solleveld, 
\textit{Depth and the local Langlands correspondence}, prépublication (2013). 

\bibitem{BaduJL}
A.~I.~Badulescu,
\textit{Correspondance de Jacquet-Langlands en caractéristique non nulle}, 
{Ann. Scient. Éc. Norm. Sup. (4)} \textbf{35} (2002), 695--747.

\bibitem{BaduJIMJ}
A.~I.~Badulescu, 
\textit{Jacquet-Langlands et unitarisabilité}, 
{J. Inst. Math. Jussieu} \textbf{6} (2007), n°3, 349--379.

\bibitem{BHLS}
A.~I. Badulescu, G.~Henniart, B.~Lemaire et V.~S{\'e}cherre, 
\textit{Sur le dual unitaire de {${\rm GL}_r(D)$}},
  {Amer. J. Math.} \textbf{132} (2010), n°5, 1365--1396.

\bibitem{BSS}
P.~Broussous, V.~S{\'e}cherre et S.~Stevens, 
\textit{Smooth representations of {${\rm GL}(m,D)$}, {V}: 
    {endo-classes}}, {Do\-cu\-menta Math.} \textbf{17} (2012), 23--77. 

\bibitem{BHcgds}
C.~Bushnell et G.~Henniart,
\textit{Counting the discrete series for {${\rm GL}(n)$}}, 
{Bull. Lond. Math. Soc.} \textbf{39} (2007), n°1, 133--137. 


\bibitem{BHJL3}
C.~Bushnell et G.~Henniart,
\textit{The essentially tame Jacquet-Langlands correspondence for inner forms of
  ${\rm GL}(n)$}, {Pure Appl. Math. Q.} {\bf 7} (2011), n°3, 469--538. 

\bibitem{BHl}
C.~Bushnell et G.~Henniart,
\textit{Modular local Langlands correspondence for {${\rm GL}_n$}}, 
{Int. Math. Res. Not.} \textbf{15} (2014), 4124--4145. 

\bibitem{BHL}
C.~Bushnell, G.~Henniart et B.~Lemaire,
\textit{Caractère et degré formel pour les formes intérieures de {${\rm GL}(n)$} 
sur un corps local de caractéristique non nulle}, 
{Manuscripta Math.} {\bf 131} (2010), n°1-2, 11--24. 

\bibitem{Datl}
J.-F.~Dat,
\textit{Théorie de Lubin-Tate non-abélienne $\ell$-entière}, 
{Duke Math. J.} \textbf{161} (2012), n°6, 951--1010.

\bibitem{Datj}
J.-F.~Dat, 
\textit{Un cas simple de correspondance de Jacquet-Langlands modulo
  $\ell$}, {Proc. London Math. Soc.} \textbf{104} (2012), 690--727.

\bibitem{DKV}
P.~Deligne, D.~Kazhdan et M.-F.~Vign{\'e}ras, 
\textit{Repr\'esentations des alg\`ebres centrales simples {$p$}-adiques}, 
Representations of reductive groups over a local field, Hermann, 
Paris, 1984. 

\bibitem{Dippd2}
R.~Dipper, \textit{On the decomposition numbers of the finite 
  general linear groups. {II}}, {Trans. Amer. Math. Soc. } \textbf{292} 
  (1985), n°1, 123--133. 

 \bibitem{DJ}
 R.~Dipper et G.~James, \textit{Identification
   of the irreducible modular representations of {${\rm GL}_n(q)$}}, {J.
   Algebra} \textbf{104} (1986), n°2, 266--288.

\bibitem{Green} 
J.~A.~Green,
\textit{The characters of the finite general linear 
   groups}, {J. Algebra} \textbf{184} (1996), n°3, 839--851. 

\bibitem{James}
G.~James,
\textit{The irreducible representations of the finite general linear
  groups}, {Proc. London Math. Soc. (3)} \textbf{52} (1986), n°2,
  236--268.

\bibitem{JL}
H.~Jacquet et R.~P.~Langlands, 
\textit{Automorphic forms on ${\rm GL}(2)$}, 
Lecture Notes in Mathematics \textbf{114}, Springer, 1970.

\bibitem{KL}
A.~Kret et E.~Lapid, 
\textit{Jacquet modules of ladder representations}, 
{C. R. Math. Acad. Sci. Paris} \textbf{350} (2012), 
n°21-22, 937--940. 

\bibitem{MSb}
A.~M{\'{\i}}nguez et V.~S{\'e}cherre, 
\textit{Représentations banales de {${\rm GL}(m,D)$}}, 
{Compos. Math.} \textbf{149} (2013), 679--704.

\bibitem{MSc}
A.~M{\'{\i}}nguez et V.~S{\'e}cherre, 
\textit{Repr{\'e}sentations lisses modulo {$\ell$} de {${\rm GL}_m(\D)$}}, 
{Duke Math. J.} {\bf 163} (2014), 795--887. 

\bibitem{MSt}
A.~M{\'{\i}}nguez et V.~S{\'e}cherre, 
\textit{Types modulo $\ell$ pour les formes intérieures de ${\rm GL}_{n}$ sur un 
  corps local non archimédien}.
Avec un appendice par V.~Sécherre et S.~Stevens.
Proc. London Math. Soc. {\bf 109} (2014), n°4, 823--891.

\bibitem{MSf}
A.~M{\'{\i}}nguez et V.~S{\'e}cherre, 
\textit{Repr{\'e}sentations modulaires de {${\rm GL}_n(q)$} en 
caract\'eristique non naturelle}, 
Contemporary Math. {\bf 649} (2015).

\bibitem{MSi}
A.~M{\'{\i}}nguez et V.~S{\'e}cherre, 
\textit{L'involution de Zelevinski modulo $\ell$}, 
Represent. Theory {\bf 19} (2015), 236--262. 

\bibitem{Rog}
J.~Rogawski,
\textit{Representations of {${\rm GL}(n)$} and division algebras over 
a {$p$}-adic field}, {Duke Math. J.} {\bf 50} (1983), 161--196. 

\bibitem{SeSt1}
V.~S{\'e}cherre et S.~Stevens,
\textit{Repr\'esentations lisses de {${\rm GL}(m,D)$}, {IV} : {repr\'esentations
  supercuspidales}}, {J. Inst. Math. Jussieu} \textbf{7} (2008), n°3, 
  527--574.

\bibitem{SeSt2}
V.~S{\'e}cherre et S.~Stevens, 
\textit{Smooth representations of {${\rm GL}(m,D)$}, {VI}: {semisimple
  types}}, {Int. Math. Res. Not.} \textbf{13} (2012), 2994--3039. 

\bibitem{SSb}
V.~S{\'e}cherre et S.~Stevens, 
\textit{Block decomposition of the category of $\ell$-modular smooth 
  representations of ${\rm GL}_n(\F)$ and its inner forms}. 
Ann. Scient. Éc. Norm. Sup. {\bf 49} (2016), n°3, 669--709. 

\bibitem{Tadic}
M.~Tadi{\'c},
\textit{Induced representations of {${\rm GL}(n,A)$}
  for {$p$}-adic division algebras {$A$}}, 
{J. Reine Angew. Math.} \textbf{405} (1990), 48--77.

\bibitem{Vigb}
M.-F. Vign{\'e}ras,
\textit{Repr\'esentations {$l$}-modulaires d'un
  groupe r\'eductif {$p$}-adique avec {$l\ne p$}}, Progress in Mathematics,
  vol. 137, Birkh\"auser Boston Inc., Boston, MA, 1996.

\bibitem{Vigl}
M.-F. Vign{\'e}ras, 
\textit{Correspondance de Langlands semi-simple pour {$\GL_n(\F)$} 
  modulo {$\ell\neq p$}}, {Invent.  Math} \textbf{144} (2001), 
177--223. 

\bibitem{Vigw}
M.-F. Vign{\'e}ras, 
\textit{On highest Whittaker models and integral structures}, 
 {Contributions to Automorphic forms, Geometry and Number theory: 
   Shalikafest 2002}, John Hopkins Univ. Press, 2004, 773--801.

\bibitem{Ze2}
A.~V. Zelevinsky, \textit{Induced representations of reductive
  {${\mathfrak{p}}$}-adic groups. {II}. {O}n irreducible representations of
  {${\rm GL}(n)$}}, {Ann. Scient. Éc. Norm. Sup. (4)} \textbf{13}
  (1980), n°2, 165--210.

\end{thebibliography}
\end{document}